\documentclass[opre,sglanonrev]{informs4}
\usepackage{eqndefns-left} 
\usepackage{import}

\usepackage{import}
\usepackage{amsmath, amssymb, bbold, mathtools}
\usepackage{footnote}
\usepackage{acronym}
\usepackage{enumerate}
\usepackage{hyperref}
\usepackage[capitalise]{cleveref}
\usepackage{multirow}
\usepackage{subcaption}

\usepackage{graphicx}

\usepackage{natbib}
 \bibpunct[, ]{(}{)}{,}{a}{}{,}
 \def\bibfont{\small}

\usepackage{tikz}
\usetikzlibrary{shapes.geometric}
\usetikzlibrary{matrix,positioning}
\usepackage{pgfplots}
\pgfplotsset{compat=1.15}

\usepackage{mathrsfs}
\usetikzlibrary{arrows}
\usepackage{array}

\usepackage[noline,ruled]{algorithm2e}

\crefname{theorem}{Theorem}{Theorems}
\crefname{lemma}{Lemma}{Lemmas}
\crefname{proposition}{Proposition}{Propositions}
\crefname{corollary}{Corollary}{Corollaries}
\crefname{definition}{Definition}{Definitions}

\usepackage{algorithm}
\usepackage{algpseudocode}

\newcommand{\Eb}{\mathbb{E}}

\newcommand{\Prob}{{\rm{Prob}}}

\newcommand{\cX}{\mathcal{X}}
\newcommand{\cC}{\mathcal{C}}
\newcommand{\cD}{\mathcal{D}}
\newcommand{\cP}{\mathcal{P}}

\renewcommand{\Re}{\mathbb{R}}
\newcommand{\integ}{\mathbb{N}}

\newcommand{\abs}[1]{\left|#1\right|}

\newcommand{\dir}[1]{\mathrm{dir}\left(#1\right)}

\newcommand{\relint}[1]{\text{relint}\left(#1\right)}
\newcommand{\aff}[1]{\text{Aff}\left(#1\right)}

\newcommand{\R}[0]{\mathbb R}
\newcommand{\Ker}[0]{\mathrm{Ker}\;}

\renewcommand{\dim}[0]{\mathrm{dim}\;}
\newcommand{\Vect}{\mathrm{span}~}
\newcommand{\proj}[2]{\mathrm{proj}_{#1}\left(#2\right)}

\newcommand{\norm}[1]{\left\lVert#1\right\rVert}

\newcommand{\conv}{\mathrm{conv}}
\newcommand{\dataset}[0]{\mathcal D}
\newcommand{\sufficient}{sufficient decision dataset}
\newcommand{\Sufficient}{Sufficient Decision Dataset}
\newcommand{\suff}{sufficient}
\newcommand{\Suff}{Sufficient}

\newcommand{\cQ}{\mathcal Q}
\newcommand{\dualpoly}[2]{#1^\star\left(#2\right)}

\newcommand{\FD}{\mathrm{FD}}

\newcommand{\spc}[1]{\mathrm{dir}\left(#1\right)}
\newcommand{\ctrue}[0]{c_{\text{true}}}
\newcommand{\chat}[0]{\hat{c}}
\newcommand{\diam}[1]{\mathrm{diam}\left(#1\right)}
\newcommand{\xtrue}[0]{x_{\text{true}}}
\newcommand{\xhat}[0]{\hat{x}}

\renewcommand{\relint}[0]{\mathrm{relint}~}

\RequirePackage{tgtermes}
\RequirePackage{newtxtext}
\RequirePackage{newtxmath}
\RequirePackage{bm}
\RequirePackage{endnotes}

\OneAndAHalfSpacedXII 

\usepackage{algorithm}
\usepackage{algpseudocode}
\usepackage{tikz}

\usepackage{cleveref}
\crefname{appsec}{Appendix}{Appendices}
\Crefname{appsec}{Appendix}{Appendices}

\usepackage{natbib}
 \bibpunct[, ]{(}{)}{,}{a}{}{,}%
 \def\bibfont{\small}%
\EquationsNumberedThrough    

\TheoremsNumberedThrough     
\ECRepeatTheorems  %

\MANUSCRIPTNO{}

\begin{document}




\TITLE{Data Informativeness in Linear Optimization under Uncertainty}
\ARTICLEAUTHORS{%
\AUTHOR{Omar Bennouna}
\AFF{Laboratory of Information and Decision Systems,
Massachusetts Institute of Technology, \EMAIL{omarben@mit.edu}}

\AUTHOR{Amine Bennouna}
\AFF{Kellogg School of Management,
Northwestern University, \EMAIL{amine.bennouna@kellogg.northwestern.edu}}

\AUTHOR{Saurabh Amin}
\AFF{Laboratory of Information and Decision Systems,
Massachusetts Institute of Technology, \EMAIL{amins@mit.edu}}

\AUTHOR{Asuman Ozdaglar}
\AFF{Laboratory of Information and Decision Systems,
Massachusetts Institute of Technology, \EMAIL{asuman@mit.edu}}
} 

\ABSTRACT{We study the problem of determining what data is required to solve a decision-making task when only partial information about the state of the world is available. Focusing on linear programs, we introduce a decision-focused notion of data informativeness that formalizes when a data set is sufficient to recover the optimal decision. Our notion abstracts away the notion of estimators (how data is used): it depends solely on the structure of the optimization task and the uncertainty.
Our main result provides a geometric characterization of data sufficiency: a data set is sufficient if and only if, together with prior knowledge, it captures all cost directions that can change the optimal solution, given the task structure and the uncertainty set.
Building on our characterization, we develop a tractable algorithm to determine minimal sufficient data sets under general data collection constraints. Taken together, our work introduces a principled framework for task-aware data collection.
We demonstrate the approach in two applications: selecting where to conduct field experiments to inform infrastructure design and choosing which candidates to interview in order to make an optimal hiring decision. Our results illustrate that small, carefully selected data sets often suffice to determine the optimal decisions.}




\KEYWORDS{Data-Driven Decision-Making, Optimization Under Uncertainty, Experimentation, Data Collection, Data Informativeness} 

\maketitle


\section{Introduction}
\label{sec:intro}

Decision-making problems are often performed under incomplete knowledge of the state of nature---that is, they rely on parameters that must be learned or estimated. In practice, experts draw on a combination of domain knowledge and experience from previously solved tasks. With the recent surge in data availability, data-driven decision-making has become a dominant paradigm: data now plays a central role in complementing contextual knowledge to guide decisions.
This paper seeks to understand the fundamental informational value of a given data set with respect to a given decision-making task.
More precisely, the question here is to what extent does a given data set contribute to solving a particular decision-making problem?

The fundamental question of data informativeness has several important implications, including for data collection: when faced with a given task, what data should be collected to best inform this task? This question is related to several extensively studied topics
in economics, statistics, computer science, and operations research literature.  In what follows, we discuss three key aspects in which our approach differs from the state of the art, then provide an example motivating our perspective.

\textit{Estimation-focused vs. decision-focused.} The extensive literature on experimentation primarily focuses on estimating differences in outcomes between alternatives—namely, treatment effects. This body of work is well established and offers deep insights into which data is most informative for parameter estimation. However, in complex decision-making settings, estimation is typically not the ultimate objective. Instead, the central task is the decision itself. Our interest lies in understanding how data directly informs \textit{optimal decisions} rather than how accurately it enables \textit{parameter estimation}. Ultimately, the goal is to determine which data should be collected to most effectively guide decision-making. In other words, we seek to study task-specific experimentation and to understand how the structure of a particular decision task shapes the informativeness of data.

\textit{Adaptive vs. non-adaptive data collection.}
In many data-driven settings, informativeness is approached via \textit{adaptive}, sequential data collection. Active learning \citep{settlesActiveLearningLiterature2009,hannekeAgnosticActive2025,zheng2017active} seeks to sequentially select data points that improve a classifier by minimizing predictive loss, while bandit algorithms \citep{lattimoreBanditAlgorithms2020,carlsson2024pure} aim to optimize decisions through sequential exploration. Adaptive experimental design \citep{zhaoExperimentalDesign2024a} similarly selects experiments to maximize information gain about unknown parameters, often guided by Bayesian criteria such as posterior variance reduction. 
These approaches rely on real-time feedback to guide data acquisition. However, in many practical applications—such as large-scale surveys or field trials—queries must often be selected in advance, and outcomes are revealed only afterward. In
such settings, adaptivity is infeasible. Even in settings where adaptivity is theoretically possible, it can be operationally burdensome, requiring continuous monitoring and frequent adjustments that disrupt standard workflows. In this work, we seek to study informativeness in a \textit{non-adaptive} regime: we investigate how to strategically select queries up-front, before observing any outcomes, so as to most efficiently inform a complex decision-making task.

\textit{Universal vs. task-specific informativeness.} One of the earliest and most celebrated frameworks for comparing data sets is Blackwell's theory of informativeness \citep{blackwellEquivalentComparisonsExperiments1953}. In this framework, a data set is abstracted as an experiment, which generates a signal $s \in S$ drawn from a distribution \(P(s|\theta)\), informing on $\theta \in \Theta$, the unknown state of nature. Two data sets (experiments) are compared by whether one enables better decision-making across \emph{all loss functions} and priors. Formally, an experiment \(P\) is more informative than experiment \(Q\) if
$$
\inf_{\delta:S \to \cX} \Eb_{\theta \sim \pi,\; s \sim P(\cdot|\theta)}[L(\delta(s),\theta)] 
\leq 
\inf_{\delta:S \to \cX} \Eb_{\theta \sim \pi,\; s \sim Q(\cdot|\theta)}[L(\delta(s),\theta)], \quad \text{for all loss $L$ and prior $\pi$}
$$
Blackwell's seminal result shows that this criterion is equivalent to several elegant characterizations, notably through the notion of garbling \citep{deoliveiraBlackwellsinformativeness2018}.

Blackwell's informativeness criterion imposes a strict requirement: it compares data sets by whether they enable better decisions across \textit{all possible tasks}.
In practice, only a specific task is of interest. 
In this work, we seek to develop a \textit{task-specific} notion of informativeness grounded in the structure of the decision task itself. 
This aligns better with practical applications but also makes the informativeness question more delicate: as \citet{lecamComparisonExperimentsShort1996} observed, such questions may become ``complex or impossible depending on the statistician’s goal''.

In this paper, we seek to develop a \textit{non-adaptive decision-focused} notion of the informativeness of data. Ultimately, our goal is to address the following question: given a decision-making task and uncertainty on the parameters of that task, what data enables finding the optimal decision? 

\paragraph{An Illustrating Example.}
To illustrate the importance of this question, consider the following practical problem. Suppose a city is planning to build a new subway line from point A to point B. The objective is to identify the least costly route across the city. Prior data and contextual information provide preliminary estimates of construction costs across different areas, but these estimates are too uncertain to reliably determine the optimal route.
Building costs depend on numerous factors, including ground and geological conditions, land acquisition costs, ecological impact...
More accurate cost assessments for each potential “edge’’ in the city’s graph can be obtained through detailed engineering field studies of these factors. However, it is neither feasible nor desirable to conduct such studies for every possible edge.

The city must therefore strategically select which edges to investigate to determine the optimal subway line route. Because each field study may take several months, \textit{adaptivity is clearly infeasible}. Moreover, not all parts of the city are equally relevant. For instance, if both the origin and destination lie in the northern region, edges far to the south are unlikely to matter for the final route. Thus, \textit{the structure of the specific decision-making task} itself plays a crucial role in determining which data is most informative.

To the best of our knowledge, even within this specific application, no established algorithm provides a non-trivial solution to the problem. As an application of our work, we will formalize this problem in \cref{sec: subway example} and show how to rigorously determine the smallest set of edges that must be investigated to guarantee recovery of the optimal route.

\paragraph{Contributions.}

To derive precise theoretical insights, we focus on decision-making tasks formulated as linear programs with cost uncertainty modeled through a set. Within this class, we study informativeness in terms of recovering the optimal solution. Importantly, we abstract away the notion of estimators (how data is used) and instead define a data set as \emph{sufficient} if it contains the fundamental information required to recover the optimal decision. This problem is formally introduced in Section~\ref{sec:problem-formulation}.  Our contributions are as follows:


\begin{itemize}
    \item 
    \textbf{Complete Geometric Characterization.} 
We prove a necessary and sufficient condition (\cref{thm: relatively open characterization}) under which a data set is \textit{sufficient} to recover the optimal solution of a linear program under cost uncertainty. A data set is sufficient if and only if it spans a subspace of task-relevant directions in the parameter space, which we construct explicitly from the relative geometry of the task structure and the uncertainty set. This yields a direct characterization of how problem structure and uncertainty together shape the fundamental information required for decision-making.
We further show that the space of task-relevant directions can equivalently be expressed as the span of differences between optimal solutions under different cost vectors in the uncertainty set. This characterization (\cref{thm:span delta is dir x}) provides an algorithmically accessible formulation for evaluating and constructing sufficient data sets.

%


\item \textbf{Algorithms and Computational Complexity:} We turn to the data selection problem (Section~\ref{sec:algorithm}): Given a task and uncertainty, what is the smallest data set that suffices to recover the optimal decision? We show that the problem can be addressed in two-stages.
First, we introduce a tractable algorithm that computes the task-relevant subspace. When uncertainty is described by a polyhedral set, the algorithm terminates after a number of iterations equal to the size of the required data set, with each iteration involving the solution of a tractable mixed-integer program (Theorem~\ref{thm:alg_termination}). 
Second, we address the problem of constructing minimal data sets under given \textit{query constraints} (restrictions on which observations can be collected). 
From our characterization, this is a data set that spans the task-relevant subspace while verifying the query constraints.
We prove that this data selection problem is NP-hard in general (Propositions~\ref{hardness result}, \ref{prop: hardness Q basis}). However, when query sets have geometric structure (vector spaces, relatively open sets, convex sets, extreme points), we provide polynomial-time algorithms achieving the lower bound or exceeding it by at most one (Section~\ref{sec:query constraints}, Table~\ref{tab:hardness results}). This reveals that continuous structure enables efficient data set construction despite the combinatorial nature of the selection problem.



\item \textbf{Applications:}
We demonstrate the scope of our framework through two applications (Section~\ref{sec:applications}). First, we study the subway design problem, modeled as a shortest-path problem where the planner must decide which edges to inspect in order to determine the optimal route under uncertain construction costs. Second, we study a hiring application modeled as a non-adaptive secretary problem, in which a decision-maker must select which candidates to interview among a large pool so as to reveal enough information to make an optimal hiring decision.

\item \textbf{Extensions:} 
Our framework suggests several natural extensions towards decision-focused experimentation.
We discuss these extensions and open questions in Section \ref{sec:conclusion}. 
These include more general decision-making problems (beyond LPs), observational structures (potentially stochastic), uncertainty structures (in the constraints), and more. Furthermore, we prove that our characterization extends to combinatorial problems (\cref{cor:MIP-sufficiency}) and noisy observations (\cref{prop:bound on noisy case}).


\end{itemize}

\subsection{Related Work}



In addition to experimentation, bandits, active learning, and Blackwell's informativeness, we now discuss further conceptually related work.

\paragraph{Data Attribution, Influence Functions and Robust Statistics.}
Influence functions, originating in robust statistics \citep{huberRobustEstimation1992, hampelRobustStatisticsApproach1986}, quantify the local impact of individual data points on estimators and have recently received renewed interest \citep{broderickAutomaticFiniteSampleRobustness2023}.
Similar approaches include DataShapely \citep{ghorbaniDataShapley2019,kwonBetaShapley2022, jiangOpenDataValUnified2023, jiaEfficientData2023}, and Datamodels \citep{ilyasDatamodelsPredicting2022, dassDataMILSelecting2025, ilyasMAGICNearOptimal2025}.
These methods typically analyze how \textit{small perturbations} to a data set affect the output of a \textit{fixed estimator}. However, a key limitation of this approach is that data value is generally ``non-additive'': the informativeness of an individual data point is not intrinsic, but rather related to the data set as a whole. Our focus is on the joint informativeness of the full data set---characterizing when a collection of observations, as a whole, suffices to recover the task-optimal decision. Joint informativeness, combinatorial in nature, is a more challenging problem \citep{freundPracticalRobustnessAuditing2023,rubinsteinRobustnessAuditingLinear2024}. 
Furthermore, while influence functions assess sensitivity in \textit{estimation} problems under \textit{fixed} inference procedures, our framework evaluates data informativeness with respect to a \textit{decision task}, at a data set-level \textit{independently} of any specific inference or optimization procedure. That is, in contrast to estimator-specific influence methods, our work characterizes data set-level informativeness based solely on the structure of the decision task. 

\paragraph{Data Value in Operations Problems.} Recent works in the operations management literature aim to provide insights on the value of data in their respective settings. In pricing, for example, \cite{bahamouFastRevenue2024} computes the worst-case optimality ratio of decisions informed by a given data set and provides a procedure to identify the single most informative additional data point. Meanwhile, \cite{ahnOnlineDecisions2025} studies how (potentially biased) offline data can still improve online decision-making. In assortment optimization, \cite{liuMarginalValue2023} measures the marginal revenue gained from acquiring an additional data point. In inventory management, several studies evaluate the quality of data-driven decision performance as a function of the sample size of randomly generated demand data \cite{zhangMoreData2024,besbesHowBig2023,leviDataDrivenNewsvendor2015a}. 
Our work studies the smallest data set that can be strategically selected to inform the optimal decision. In particular, it exhibits how the nature of the problem, and uncertainty, determine what data matters.

\paragraph{Contextual Optimization.} In contextual optimization, as in our setting, a decision-maker aims to choose a decision $x \in \cX$ minimizing the loss $L(x,\theta)$, where $\theta$ is unknown \citep{sadana2023survey,hu2022fast,bertsimas2020predictive}. The decision maker also has access to side information $\phi \in \R^p$ that is correlated with $\theta$. Given empirical samples from the joint distribution of $\phi$ and $\theta$, the decision-maker needs to learn a policy $\pi$ that maps side information $\phi$ to an optimal decision $x \in \cX$.
Within this literature, much of the work—particularly in linear optimization—focuses on constructing data-driven surrogates of the unknown loss function, with the goal of improving decision quality rather than merely predicting losses. A prominent line of research in this direction is the \textit{predict-and-optimize} framework \citep{elmachtoub2022smart}. This paradigm is similar to ours in the sense that the aim is to directly focus on optimal decisions rather than predictions. However, the fundamental difference between contextual optimization and our setting is that our main concern is to understand \textit{how to select the most informative data set}, whereas in contextual optimization, the \textit{data is already given} and one must determine how to use it to produce an optimal decision policy. More recent work in contextual optimization has also considered \textit{adaptive} data-selection strategies \citep{liu2023active}.

\paragraph{Parametric Programming and Sensitivity Analysis.} This stream of work aims to understand how the optimal decision and value change when the underlying problem parameters are perturbed. Sensitivity analysis focuses on small, local perturbations, asking how far one can move in a given direction while preserving optimality (\cite{ward1990approaches,xuRobustSensitivity2017}). Multiparametric programming, by contrast, considers larger, structured changes in the parameters and aims to characterize the full mapping from parameters to optimal solutions, often by partitioning the parameter space into regions where the set of minimizers remains constant (\cite{gal1972multiparametric,saaty1954parametric}). The connection to our work lies in the shared goal of studying the interplay between problem parameters and optimal solutions. However, while sensitivity analysis and parametric programming aim to describe how solutions evolve as parameters vary, our focus is on identifying which data sets—i.e., which task parameters—are sufficient to recover the optimal solution.


\paragraph{Robust Optimization and Set-based vs. Distribution-based Modeling of Uncertainty.} 
In our problem, we chose to model uncertainty through a set (unknown parameter belonging to a known set) following the robust optimization paradigm. This is in contrast to modeling uncertainty as the unknown parameter following some known distribution, as in Bayesian optimization, for example. This modeling choice of uncertainty has been widely discussed in the robust optimization literature \citep{ben2009robust,bertsimas2011theory,delage2010distributionally}.

Set-based uncertainty has several practical advantages in our context compared to distribution-based uncertainty. For instance, set-based approaches typically rely on milder and often more realistic assumptions, as they do not require a fully specified probabilistic model of uncertainty. Instead, uncertainty is captured through bounds or confidence sets that are valid for a general class of distributions (such as with a given finite moments, or a given support). This is particularly appealing in settings where the true distribution is unknown, partially observed, or difficult to estimate reliably. Moreover, set-based formulations often lead to more tractable optimization problems and are less sensitive to model misspecification (see \cite{bertsimas2018data,ben2002robust}). For instance, in Bayesian Experimental Design, standard approaches require expensive computations to evaluate expected information gains in high-dimensional spaces and are highly sensitive to model misspecification, which can lead to suboptimal results \citep{rainforth2024modern}.

\paragraph{Inverse Optimization.} 
This line of work \citep{ahuja2001inverse, iyengar2005inverse, chan2019inverse, besbesContextualInverse2025} studies a related but distinct question: given a set of realized decisions, the goal is to identify an objective function (e.g., a cost vector in linear programs) under which the observed decisions are (near-)optimal. In our work, by contrast, the aim is to \emph{strategically select} observations that enable recovery of the optimal decision.

The distinction is twofold. First, in inverse optimization, the observations are \emph{given}, and the goal is to use them as effectively as possible; while in our setting, the goal is specifically to \textit{choose} the most informative observations. Second, inverse optimization seeks to infer the latent parameters of the model whereas our goal is to recover the optimal decision itself.
The focus is therefore on model recovery: explaining behavior by reconstructing a latent objective consistent with the data. While both these approaches connect data and optimality, they address fundamentally different problems.

Data informativeness also intersects with stochastic probing \citep{weitzmanOptimalSearch1979, singlaPriceInformationCombinatorial2018, gallegoConstructiveProphet2022}, optimal experimental design \citep{chalonerBayesianExperimental1995, singhApproximationAlgorithms2020}, and algorithms with predictions \citep{mitzenmacherAlgorithmsPredictions2020}, though a detailed comparison is beyond our scope.

\section{Problem Formulation}\label{sec:problem-formulation}
We study decision-making tasks modeled as linear programs (LPs). LPs model a wide range of problems, such as resource allocation, portfolio optimization, network flows, scheduling, and logistics problems. Additionally, they provide constant-factor approximations to broader classes of combinatorial problems. The decision-maker's task is to solve the LP 
{\setlength{\abovedisplayskip}{3pt}
 \setlength{\belowdisplayskip}{3pt}
 \setlength{\abovedisplayshortskip}{3pt}
 \setlength{\belowdisplayshortskip}{3pt}
\begin{align}
    \min_{x\in \cX}c^\top x,\label{mainproblem}
\end{align}
}
where $\cX=\{x\in \R^d,\; Ax=b,\; x\geq 0\}$ is a bounded polyhedron, with $A\in \R^{m\times d},\; b\in \R^m$.

\textbf{Uncertainty Model.} The unknown parameter---or state of nature---here is the cost vector $c$. The decision-maker only knows it to be in some given uncertainty set $\cC\subset \R^d$, which captures prior information on $c$. Naturally, this is equivalent to having uncertainty in the constraint right-hand side $b \in \Re^m$ by studying the dual problem instead.

\textbf{Data Collection Model.} To solve the linear program, the decision-maker can complement their knowledge $c \in \cC$ by data on the task.
A data set $\cD \subset \R^d$ consists of a set of queries to evaluate the objective function. That is, a data set gives access to the observations $c^\top q$ for $q\in \cD$. We focus on the noiseless setting, where each observation $c^\top q$ is exact. This simplification enables a sharp characterization of informativeness. We then show how the core insights naturally extend to noisy observations in \cref{prop:bound on noisy case}.

The fundamental question we seek to address is which data sets are \textit{sufficient} to solve the linear program. We formalize such a property next. 
Here $\cP(\cX)$ denotes subsets of $\cX$.

\begin{definition}[\sufficient] \label{def:sufficient}
    A data set $\cD:=\{q_1,\dots,q_N\}$ is a \sufficient\ for an uncertainty set $\cC\subset \R^d$ and decision set $\cX$ if there exists a mapping $\hat X:\R^{N} \longrightarrow \cal P(\cX)$ such that
    {\setlength{\abovedisplayskip}{3pt}
 \setlength{\belowdisplayskip}{3pt}
 \setlength{\abovedisplayshortskip}{3pt}
 \setlength{\belowdisplayshortskip}{3pt}
    \begin{align*}
        \forall c \in \cC, \quad \hat X\left(c^\top q_1,\dots,c^\top q_N\right)=\arg\min_{x\in \cX}c^\top x.
    \end{align*}
    }
When there is no ambiguity on $\cC$ and $\cX$, we simply say that $\mathcal D$ is a \suff.
\end{definition}

\cref{def:sufficient} states that a data set is \suff\ if there exists a mapping that can recover the optimal solution of the decision-making task using \textit{only} the data set’s observations $\{c^\top q,\; q\in \cD\}$ and prior information ($c \in \cC$). 
 Importantly, data collection here is \textit{non-adaptive}: the decision-maker has to commit to a set of queries in advance (rather than sequentially) and then decides based on all the revealed observations.

It is important to note that in \cref{def:sufficient}, we abstract the notion of algorithm $\hat{X}$ that specifies how the data is used. A data set is sufficient if \textit{there exists some algorithm} $\hat{X}$ that can extract from the data the necessary information to solve the optimization problem. Hence, sufficiency quantifies the intrinsic value of the data, independent of algorithmic/estimator choices. While this definition captures a fundamental notion of data value, it is hard to determine. Verifying whether a data set is sufficient requires (at least naively) testing \textit{every} possible algorithm $\hat{X}$, to check whether it solves the LP for \textit{every} instance $c \in \cC$. Such a procedure is clearly impossible. A key challenge we address in this paper is developing tractable characterizations of sufficiency.

Naturally, $\cD = \{e_1,\ldots,e_d\}$, where $(e_i)_{i\in [d]}$ are canonical basis vectors is a \sufficient. In fact, observing $c^\top e_i = c_i$ for all $i \in [d]$ amounts to fully observing $c$, and solving the linear program with complete information with $\hat{X}((c^\top q)_{q \in \cD}) = \hat{X}(c) := \argmin_{x \in \cX} c^\top x$. The question is then whether there exist other, potentially smaller \suff\ data sets. That is, what is the least amount of information required for the decision-making task? As we will show, whether a data set is \suff\ depends critically on the uncertainty set $\cC$ and the feasible region $\cX$. From a data collection perspective, the smallest such data sets $\cD$ are of interest. We refer to such data sets as minimal. In \cref{sec:algorithm}, we will discuss the problem of finding minimal data sets $\cD$, under query constraints $\cD \subset \cQ$. 

If the goal is to solve the linear program \eqref{mainproblem}, 
a natural relaxation of \cref{def:sufficient} is to only require that a data set permits recovery of \textit{some} optimal solution, rather than the \textit{entire set} of optimal solutions.
We show in the next proposition that, under mild structural assumptions, this property is equivalent to the property of Definition \ref{def:sufficient}. This means that any data set that recovers one solution also recovers all solutions. The proof of this equivalence is nontrivial and relies on several structural results we develop later in the paper.

\begin{proposition}[One vs All Optimal Solutions] \label{prop:equivalence-argmin}
        Let $\cC$ be an open convex set and $\cD:=\{q_1,\dots,q_N\}$ a data set. The following are equivalent:
        \begin{enumerate}
            \item \label{item:fullargmin} There exists a mapping $\hat X:\R^{N} \longrightarrow\cal P(\cX)$ such that
            \(
                \forall c \in \cC, \; \hat X\left(c^\top q_1,\dots,c^\top q_N\right)=\arg\min_{x\in \cX}c^\top x.
            \)

            \item \label{item:singlesol} There exists a mapping $\hat{x}:\R^N\longrightarrow \cX$ such that
                \(
                    \forall c\in \cC,\; \hat{x}\left(c^\top q_1,\dots,c^\top q_N\right)\in \arg\min_{x\in \cX}c^\top x. \label{condition for suff}
                \)
        \end{enumerate}
    
    \end{proposition}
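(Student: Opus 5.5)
The implication (\ref{item:fullargmin}) $\Rightarrow$ (\ref{item:singlesol}) is immediate. Since $\cX$ is a bounded polyhedron, which I take to be nonempty, $\arg\min_{x\in\cX}c^\top x$ is nonempty for every $c$. So from $\hat X$ I define $\hat x(y)$ to be any selection from $\hat X(y)$ when $\hat X(y)\neq\emptyset$, and an arbitrary point of $\cX$ otherwise.

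For (\ref{item:singlesol}) $\Rightarrow$ (\ref{item:fullargmin}), I would reduce to a well-definedness statement. Suppose that for all $c,c'\in\cC$ with $c^\top q_i=c'^\top q_i$ for every $i$, we have $\arg\min_{\cX}c^\top x=\arg\min_{\cX}c'^\top x$. Then define $\hat X(y)$ as the common argmin over all $c\in\cC$ whose observation vector equals $y$, and set it to $\emptyset$ if no such $c$ exists. This $\hat X$ satisfies (\ref{item:fullargmin}). So the whole proof reduces to showing that two cost vectors in $\cC$ with identical observations have the same optimal face. I would argue this directly rather than through the later structural results.

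Fix such $c,c'$ and set $u=c'-c$, so that $u^\top q_i=0$ for all $i$. Let $c_t=c+tu$. Because $\cC$ is convex, $c_t\in\cC$ for $t\in[0,1]$. Because $\cC$ is open, there is $\varepsilon>0$ with $c_t\in\cC$ for all $t\in(-\varepsilon,1+\varepsilon)$. Every $c_t$ has the same observation vector $y=(c^\top q_i)_i$. Hence the single point $x^*:=\hat x(y)$ is optimal for every $c_t$ with $t\in(-\varepsilon,1+\varepsilon)$.

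Now take any $x\in\arg\min_{\cX}c^\top x$ and consider the affine function $f(t)=c_t^\top(x-x^*)$. Optimality of $x^*$ for each $c_t$ gives $f(t)\ge0$ on $(-\varepsilon,1+\varepsilon)$. Since both $x$ and $x^*$ are optimal for $c=c_0$, we have $f(0)=0$. An affine function that is nonnegative on an open interval and vanishes at an interior point is identically zero. Thus $f(1)=c'^\top(x-x^*)=0$, and $x$ is optimal for $c'$. The roles of $c$ and $c'$ are symmetric, which gives equality of the argmin sets.

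The only delicate point is the one handled by openness. The common optimizer must be available on a neighbourhood of $c$ along the line through $c$ and $c'$, not merely on the segment, so that $t=0$ is an interior point. Without this, $c$ could sit on the boundary of a normal cone and the argmin sets could differ; this is exactly why the hypothesis that $\cC$ is open is needed. If the paper wants, this step can later be reinterpreted via the direction-space characterization (\cref{thm: relatively open characterization}), but the elementary argument above suffices.
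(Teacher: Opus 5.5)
Your proof is correct, and it takes a genuinely different route from the paper's.

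The paper proves (2)$\Rightarrow$(1) by contradiction, using its geometric machinery. If $\cD$ is not sufficient, \cref{thm: relatively open characterization} supplies a relevant extreme direction $\delta\in\Delta(\cX,\cC)$ with $\delta\not\perp\dir{\cC}\cap(\Vect\cD)^\perp$, together with a cost $c\in F(x^\star,\delta)\cap\cC$. \cref{existence of unique argmin} then gives a vector $v$ making $x^\star$ the unique minimizer of $c+\eta v$. Subtracting a small multiple of the projection of $\delta$ onto $\dir{\cC}\cap(\Vect\cD)^\perp$ yields a cost in $\cC$ with the same observations for which $x^\star$ is not optimal, so no single-valued $\hat x$ can exist.

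You instead show directly that equal observations force equal optimal faces. Since the single point $x^\star=\hat x(y)$ is optimal for $c+tu$ on both sides of $t=0$, every $x\in\arg\min_{x\in\cX}c^\top x$ satisfies $u^\top(x-x^\star)=0$, and optimality then transfers to $c'$.

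Your route has two advantages. First, it is self-contained: it does not rely on \cref{thm: relatively open characterization}, extreme directions, or the unique-minimizer lemma, which removes the forward dependence the paper flags. Second, it is more general:
\begin{itemize}
\item Your use of convexity is superfluous. The conclusion $f\equiv 0$ only needs $f\ge 0$ near $t=0$, and optimality of $x^\star$ for $c'$ follows from $c'\in\cC$ alone.
\item Since $u=c'-c\in\dir{\cC}$, the argument works verbatim when $\cC$ is merely relatively open, convex or not.
\end{itemize}

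The paper's route, by contrast, needs convexity (through the sufficiency direction of \cref{thm: relatively open characterization}). As written it also needs a full ball $B(c,\varepsilon)\subset\cC$, because $v$ is arbitrary. What it offers in return is an explicit witness phrased in the paper's geometry: a direction $\delta$ along which a uniquely optimal vertex loses optimality without the data detecting it.
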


\begin{remark}[Extensions and Open Problems]
    At this stage, several possible natural extensions of this model appear. For instance, one can consider more general optimization problems beyond linear optimization. One can also consider approximate optimality rather than exact optimality in \cref{def:sufficient}. Uncertainty can be in the task constraints (in $A$ for e.g.) rather than the cost. We discuss and formalize several such possible exciting directions and open problems in our concluding section, Section \ref{sec:conclusion}. For now, we will show that within this setting, we already obtain interesting insights for a wide class of decision-making problems: little, well-chosen data can significantly generalize decision-making knowledge.
\end{remark}

 Notice that observing $c^\top q$ for all $q\in \cD$ is equivalent to observing the projection of $c$ in the span of $\cD$. This implies that \cref{def:sufficient} is equivalent to the following characterization, which gives a valuable perspective. For any subspace $F\subset \R^d$ and $u\in \R^d$, we denote $u_F$ the projection of $u$ in $F$.
\begin{proposition}\label{prop:sufficient:projections}
    $\dataset :=\{q_1,\dots,q_N\}$ is a \sufficient\ for uncertainty set $\cC$ and decision set $\cX$ if and only if for all $c,c'\in \cC$, if $c_{\Vect \cD}=c'_{\Vect \cD}$, then $\arg\min_{x\in \cX}c^\top x = \arg\min_{x\in \cX}c'^\top x$.
\end{proposition}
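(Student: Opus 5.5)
The plan is to show both directions by reducing everything to one fact: the observation vector $(c^\top q_1,\dots,c^\top q_N)$ determines $c_{\Vect \cD}$, and conversely. Let $F=\Vect\cD$ and let $M\in\R^{d\times N}$ be the matrix with columns $q_1,\dots,q_N$, so the observation map is $c\mapsto M^\top c$. The first step is the linear-algebra lemma
\[
M^\top c = M^\top c' \iff c-c'\in \Ker M^\top = F^\perp \iff c_F = c'_F .
\]
Here we use that $\Ker M^\top$ is the orthogonal complement of the column space of $M$, which is $F$, and that $u_F=0$ exactly when $u\in F^\perp$. Nothing specific to $\cC$ or $\cX$ enters this step.

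For the forward direction, assume $\hat X$ exists as in \cref{def:sufficient}. Take $c,c'\in\cC$ with $c_F=c'_F$. By the lemma, $c$ and $c'$ produce identical observations, so
\[
\arg\min_{x\in\cX}c^\top x=\hat X(M^\top c)=\hat X(M^\top c')=\arg\min_{x\in\cX}c'^\top x .
\]

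For the converse, assume the projection condition and define $\hat X$ on $\R^N$ as follows. If $y$ lies in the image $M^\top\cC$, choose any $c\in\cC$ with $M^\top c=y$ and set $\hat X(y)=\arg\min_{x\in\cX}c^\top x$. Otherwise set $\hat X(y)=\emptyset$; any value works here. To see that $\hat X$ is well defined, suppose two preimages $c,c'\in\cC$ of the same $y$. The lemma gives $c_F=c'_F$, and the hypothesis then gives equal argmin sets. Hence $\hat X(M^\top c)=\arg\min_{x\in\cX}c^\top x$ for every $c\in\cC$, which is exactly \cref{def:sufficient}.

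I do not expect a real obstacle; the statement is essentially a reformulation. The only point needing care is the well-definedness of $\hat X$ on non-injective fibers, which is handled by the kernel identity $\Ker M^\top=F^\perp$. Choosing a representative uses the axiom of choice, but one can avoid it by defining $\hat X(y)$ as the common argmin over the whole fiber.
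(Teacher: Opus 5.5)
Your proof is correct and follows the paper's argument. The forward direction uses that $c_{\Vect\cD}=c'_{\Vect\cD}$ forces identical observations. The converse defines $\hat X$ through any representative of the fiber, and the hypothesis gives well-definedness. Your explicit kernel identity $\Ker M^\top=(\Vect\cD)^\perp$ and your treatment of observation vectors outside $M^\top\cC$ simply make precise two points the paper leaves implicit.
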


In words, \cref{prop:sufficient:projections} states that a data set $\cD$ is \suff\ if any two cost vectors that are equivalent from the perspective of the information provided by $\cD$ (and $\cC$) lead to the same optimal solutions in the decision-making problem.

This characterization suggests a natural algorithm for solving the LP \eqref{mainproblem} when given a sufficient data set \(\mathcal{D} = \{q_1, \ldots, q_N\}\).
Suppose we observe values \(o_i = c^\top q_i, i\in [N]\) for an unknown cost vector \(c \in \mathcal{C}\). We then compute $\hat{c} \in \argmin \{ \sum_{i=1}^{N} (c'^\top q_i - o_i)^2 \; : \; c' \in \mathcal{C} \}$ and use \(\hat{c}\) to solve the decision problem \(\min_{x \in \mathcal{X}} \hat{c}^\top x\). This procedure recovers the projection of \(c\) onto \(\Vect \mathcal{D}\) while respecting the prior of \(\mathcal{C}\). This ensures \( \hat{c}_{\Vect \mathcal{D}} = c_{\Vect \mathcal{D}} \) as $c \in \cC$, and since the data set is sufficient, guarantees that the resulting decision is task-optimal (\cref{prop:sufficient:projections}).

\section{Characterizing \Suff\ data sets} \label{sec:characterizing-sufficient-data sets}
Given an uncertainty set $\cC$ and a decision set $\cX$, we would like to characterize \sufficient{}s and eventually construct such data sets. As in Blackwell's theory, the difficulty of such characterizations depends on the richness of the uncertainty set $\cC$. In fact, the first results by \cite{blackwellComparisonreconnaissances1949, blackwellComparisonExperiments1951} and 
\cite{shermanTheoremHardy1951} were for a set $\cC$ with only two elements. That is, the data needs to distinguish only two alternative states of nature. The result was later extended to the finite sets by \cite{blackwellEquivalentComparisonsExperiments1953} and then to infinite sets with regularity conditions by \cite{boll_comparison_1955}.

\subsection{Characterization for Vector Space Uncertainty Sets}

We begin with the case where $\cC$ is a subspace of $\R^d$, which already gives significant insight into how sufficient data sets depend on the uncertainty set $\cC$ and the decision set $\cX$ and in particular the no prior knowledge case, i.e., $\cC=\R^d$, which isolates how the structure of the decision set \(\mathcal{X}\) alone determines what information is necessary to recover the optimal solution. We will then study the case of convex sets. To formulate our result, define $F_0=\Vect\{e_i,\;i\in [d],\; \exists x\in \cX,\; x_i\neq 0\}$ where $e_i$ is the $i-$th element of the canonical basis. $F_0$ captures the coordinates that can take non-zero values in feasible solutions of $\cX$. That is $F_0^\perp$ captures coordinates that are identically zero in all feasible solutions: $e_i \in F_0^\perp \implies \forall x \in \cX, \; x_i=0$.

\begin{proposition} \label{prop:suff:vectorspace}
    Let $\cC$ be a subspace of $\R^d$ and $\dataset:=\{q_1,\dots,q_N\}\subset \R^d$. Let $F_0=\Vect\{e_i,\;i\in [d],\; \exists x\in \cX,\; x_i\neq 0\}$, and $\Ker A:=\{x\in \R^d, \: Ax=0\}$. The following statements are equivalent. 
    \begin{enumerate}
        \item $\dataset$ is a \sufficient.
        \label{item:prop:Cvector-space:suff}
        \item $F_0 \cap \Ker A \subset \cC^\perp +{\Vect \cD}$. \label{item:prop:Cvector-space:KerA}
        \item There exists $c \in \cC$ such that for any $c'\in \cC$ satisfying $c_{\Vect \cD}=c'_{\Vect \cD}$, we have $\arg\min_{x\in \cX}c^\top x$ and  $\arg\min_{x\in \cX}c'^\top x$ are equal. 
        \label{item:prop:Cvector-space:one-solution}
    \end{enumerate}
   Furthermore, when the condition $F_0 \cap \Ker A \subset \cC^\perp +{\Vect \cD}$ is not satisfied, for any mapping $\hat{x}:\R^N\longrightarrow \cX^\angle$, and any $K>0$, there exists $c\in \cC$ such that $c^\top \hat{x}\left(c^\top q_1,\dots,c^\top q_N\right)\geq K + \min_{x\in \cX}c^\top x$. 
\end{proposition}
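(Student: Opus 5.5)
The plan is to translate everything into linear algebra on two subspaces: the set of \emph{indistinguishable differences} $\cC\cap(\Vect\cD)^\perp$, and the direction space of the polytope, which I will show equals $F_0\cap\Ker A$. Since $\cC$ is a subspace, two costs $c,c'\in\cC$ satisfy $c_{\Vect\cD}=c'_{\Vect\cD}$ exactly when $c'=c+\delta$ with $\delta\in\cC\cap(\Vect\cD)^\perp$. Taking orthogonal complements, condition \ref{item:prop:Cvector-space:KerA} is equivalent to
\[
\cC\cap(\Vect\cD)^\perp\subset (F_0\cap\Ker A)^\perp .
\]
I will then prove the cycle \ref{item:prop:Cvector-space:KerA}$\Rightarrow$\ref{item:prop:Cvector-space:suff}$\Rightarrow$\ref{item:prop:Cvector-space:one-solution}$\Rightarrow$\ref{item:prop:Cvector-space:KerA}, using \cref{prop:sufficient:projections} throughout.

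The first step is a structural lemma: the span of $\{x-x' : x,x'\in\cX\}$ equals $F_0\cap\Ker A$. The inclusion $\subseteq$ is immediate, since $A(x-x')=0$ and the supports of $x,x'$ lie in the index set defining $F_0$. For $\supseteq$, I average finitely many feasible points, one for each index $i$ with some feasible $x_i\neq0$. This gives $x^0\in\cX$ with $x^0_i>0$ for every such $i$. For any $v\in F_0\cap\Ker A$ and small $\varepsilon>0$ we then have $x^0+\varepsilon v\ge0$ and $A(x^0+\varepsilon v)=b$, so $v=(x^0+\varepsilon v-x^0)/\varepsilon$ is a scaled difference of feasible points. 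Consequently, $\delta\in(F_0\cap\Ker A)^\perp$ if and only if $\delta^\top x$ is constant on $\cX$.

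For \ref{item:prop:Cvector-space:KerA}$\Rightarrow$\ref{item:prop:Cvector-space:suff}: if $c'=c+\delta$ with $\delta$ constant on $\cX$, then $c'^\top x=c^\top x+\text{const}$ on $\cX$, so the two argmin sets coincide, and \cref{prop:sufficient:projections} gives sufficiency. The implication \ref{item:prop:Cvector-space:suff}$\Rightarrow$\ref{item:prop:Cvector-space:one-solution} is trivial: take any $c\in\cC$, for instance $c=0$.

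For \ref{item:prop:Cvector-space:one-solution}$\Rightarrow$\ref{item:prop:Cvector-space:KerA}, I argue by contraposition and expect this to be the main step. Suppose some $\delta\in\cC\cap(\Vect\cD)^\perp$ is not constant on $\cX$, and fix an arbitrary $c\in\cC$. Every $c+t\delta$ lies in $\cC$ and has the same projection as $c$. Because $\cX$ is a polytope with finitely many vertices, for $t$ large enough $\arg\min_{\cX}(c+t\delta)$ is contained in the face $\arg\min_{\cX}\delta$. Likewise, for $-t$ large enough, $\arg\min_{\cX}(c-t\delta)$ is contained in the face $\arg\max_{\cX}\delta$. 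These two faces are disjoint because $\delta$ is nonconstant on $\cX$. Hence the two argmins cannot both equal $\arg\min_{\cX}c^\top x$, which contradicts \ref{item:prop:Cvector-space:one-solution} for every choice of $c$. The delicate part is justifying the limiting face inclusion; I would handle it by comparing vertex values, since the set of vertices of $\cX$ is finite.

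For the final unbounded-regret claim, take the same nonconstant $\delta$ and the costs $c_t=t\delta\in\cC$ for $t\in\R$. All observations $c_t^\top q_i$ vanish, so $\hat x$ outputs a single fixed point $\bar x\in\cX$. Since $\delta$ is nonconstant on $\cX$, either $\delta^\top\bar x>\min_{\cX}\delta$ or $\delta^\top\bar x<\max_{\cX}\delta$. In the first case take $t\to+\infty$: the regret equals $t\,(\delta^\top\bar x-\min_{\cX}\delta)$, which exceeds any $K$. In the second case take $t\to-\infty$: the regret equals $|t|\,(\max_{\cX}\delta-\delta^\top\bar x)$, which again exceeds any $K$. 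This argument uses only that the output lies in $\cX$, so it applies whatever the codomain $\cX^\angle\subset\cX$ denotes.
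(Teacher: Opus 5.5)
Your proof is correct, but in the two substantive places it takes a different route from the paper. The direction (2)$\Rightarrow$(1) is essentially the same in both: $c'-c$ is constant on $\cX$. Your remark that $c'-c\in\cC\cap(\Vect\cD)^\perp$ follows immediately from $\cC$ being a subspace is shorter than the paper's decomposition through $(\cC\cap F^\perp)^\perp\cap\cC$, with $F=\Vect\cD$.

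For (3)$\Rightarrow$(2), the paper uses a stronger, local lemma: at every $x\in\cX$ there are feasible directions forming a basis of $F_0\cap\Ker A$. It takes such a direction $\delta$ at an optimum $x^\star(c)$ with $\proj{\cC\cap F^\perp}{\delta}\neq 0$. It then tilts the cost to $c-M\,\proj{\cC\cap F^\perp}{\delta}$, so that moving from $x^\star(c)$ along $\delta$ strictly improves the objective. You need only the global fact that $F_0\cap\Ker A$ is spanned by differences of feasible points. That is essentially the first half of the paper's lemma: a feasible point that is positive on every index defining $F_0$. You skip the second half, which transfers a spanning set of feasible directions to an arbitrary point. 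Instead you push the cost along a nonconstant $\delta\in\cC\cap(\Vect\cD)^\perp$ in both directions.

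The same trade-off shows up in the final claim. Choosing $c_t=t\delta$ makes every observation vanish, so $\hat x$ is pinned to one point $\bar x$ and no feasible direction at $\bar x$ is needed. This is also tidier than the paper's version, which has to be careful about how $c$ splits along $F$ and $\cC\cap F^\perp$.

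Two small remarks. First, the sentence ``for $-t$ large enough, $\arg\min_{\cX}(c-t\delta)$ is contained in $\arg\max_{\cX}\delta$'' has a doubled sign; you mean $c+t\delta$ with $t\to-\infty$. Second, the face-limit step you flag as delicate can be dropped, because your final-part reasoning already proves (3)$\Rightarrow$(2). For any $c\in\cC$ and $x^\star\in\arg\min_{\cX}c^\top x$, either $\delta^\top x^\star>\min_{\cX}\delta$ or $\delta^\top x^\star<\max_{\cX}\delta$. In either case, $x^\star$ is not optimal for $c+t\delta$ once $|t|$ is large with the matching sign.
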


Condition \ref{item:prop:Cvector-space:KerA} reveals three main components: 
\begin{enumerate}
    \item \emph{Task-relevant subspace} ($F_0 \cap \Ker A$): what we need to know. The objective decomposes as $c^\top x = c_{\Ker A}^\top x + c_{(\Ker A)^\perp}^\top x$. Since $\cX$ lies in an affine space parallel to $\Ker A$, any feasible direction $\delta$ satisfies $\delta \in \Ker A$, making $c_{(\Ker A)^\perp}^\top x$ constant across feasible solutions which implies only $c_{\Ker A}$ affects optimization. Furthermore, coordinates in $F_0^\perp$ are identically zero throughout $\cX$, so $c_{F_0^\perp}$ does not contribute to objective values. Hence, only $c_{F_0 \cap \Ker A}$ matters for optimization.  
    \item \emph{Prior knowledge} ($\cC^\perp$): what we already know. When $\cC$ is a vector space, $c \in \cC$ implies $c \perp \cC^\perp$. Any component along $\cC^\perp$ is already known to be zero before any observation.
    \item \emph{Data} ($\Vect \cD$): what we measure. The data set reveals $c_{\Vect \cD}$, the projection onto span$(\cD)$. 
\end{enumerate}

Combining these, the task-relevant information ($F_0 \cap \Ker A$) must be captured by what we know ($\cC^\perp$) and what we measure ($\Vect \cD$):
{\setlength{\abovedisplayskip}{2pt}
 \setlength{\belowdisplayskip}{2pt}
\begin{align*}       
\underbrace{F_0 \cap \Ker A}_{\text{what we need to know}} \subset \underbrace{\cC^\perp}_{\text{what we know}} +\underbrace{\Vect \cD}_{\text{what we measure}}
\end{align*}
}

\subsection{Characterization for Convex Uncertainty Sets}
The goal now is to characterize \suff\ data sets for more general uncertainty sets $\cC$. We start by introducing some geometric notions that are useful to understand the sufficiency of a data set.
\begin{definition}[Extreme Points]
An element $x\in \cX$ is an extreme point if and only if there are no $\lambda \in (0,1)$ and $y,z\in \cX$ such that $y\neq z$ and $x=\lambda y + (1-\lambda)z$.
\end{definition}

From every extreme point, there is a set of \textit{feasible directions} that allow changing the solution while remaining in the polyhedron $\cX$---the feasible region. Out of these feasible directions, \textit{extreme directions} allow moving to ``neighboring'' extreme points.

\begin{proposition}[Feasible and Extreme Directions] \label{feasible directions polyhedral cone}
     For every $x^\star\in \cX^\angle$, we denote 
     {\setlength{\abovedisplayskip}{1pt}
 \setlength{\belowdisplayskip}{1pt}
 \begin{align*}
     \FD(x^\star)=\{\delta \in \R^d,\; \exists \varepsilon>0,\;x^\star +\varepsilon \delta \in \cX\}
 \end{align*}
 }
     the set of feasible directions from $x^\star$ in $\cX$. $\FD(x^\star)$ is a polyhedral cone and $\FD(x^\star)\subset F_0 \cap \Ker A$. We denote $D(x^\star)$ the set of extreme directions of $\FD(x^\star)$.
     
\end{proposition}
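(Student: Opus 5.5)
We have to show two things: that $\FD(x^\star)$ is a polyhedral cone, and that it lies in $F_0\cap\Ker A$. The plan is to write $\FD(x^\star)$ explicitly as a finite intersection of linear equalities and inequalities. Here $x^\star$ is an extreme point (it belongs to $\cX^\angle$), and the argument below in fact works for any $x^\star\in\cX$. Let $I(x^\star)=\{i\in[d]:\ x^\star_i=0\}$ be the set of active nonnegativity constraints. The claim to establish is
\begin{align*}
\FD(x^\star)=\{\delta\in\R^d:\ A\delta=0,\ \delta_i\geq 0 \ \ \forall i\in I(x^\star)\}.
\end{align*}

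\emph{Inclusion $\subset$.} Take $\delta$ and $\varepsilon>0$ with $x^\star+\varepsilon\delta\in\cX$. Then $A(x^\star+\varepsilon\delta)=b=Ax^\star$, so $A\delta=0$. For $i\in I(x^\star)$ we have $0\leq x^\star_i+\varepsilon\delta_i=\varepsilon\delta_i$, so $\delta_i\geq 0$.

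\emph{Inclusion $\supset$.} Take $\delta$ in the right-hand set. Then $A(x^\star+\varepsilon\delta)=b$ for every $\varepsilon$. The coordinates $i\in I(x^\star)$ remain nonnegative because $\delta_i\ge0$. For $i\notin I(x^\star)$ we have $x^\star_i>0$, so choosing
\begin{align*}
\varepsilon=\min_{i\notin I(x^\star),\ \delta_i<0} \frac{x^\star_i}{|\delta_i|}
\end{align*}
(or any positive value if no such $i$ exists) keeps all coordinates nonnegative. This requires only finitely many coordinates, so $\varepsilon>0$.

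The right-hand set is defined by finitely many homogeneous linear constraints, hence it is a polyhedral cone. Its extreme directions $D(x^\star)$ are then well defined, up to positive scaling, as the extreme rays of this cone.

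\emph{Containment in $F_0\cap\Ker A$.} $A\delta=0$ has already been shown. For $F_0$, suppose $e_i\in F_0^\perp$, meaning $x_i=0$ for all $x\in\cX$. Then $x^\star_i=0$ and $x^\star_i+\varepsilon\delta_i=0$, so $\delta_i=0$. Hence $\delta$ is supported on the coordinates spanning $F_0$, i.e.\ $\delta\in F_0$.

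The only slightly delicate point is the choice of a uniform $\varepsilon$ in the $\supset$ inclusion, which is handled by the finite minimum. Boundedness of $\cX$ is not needed here; it only ensures that this cone does not contain the full recession directions elsewhere, which is irrelevant for the statement.
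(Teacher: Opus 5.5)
Your proof is correct and takes the same route as the paper: identify $\FD(x^\star)$ with $\{\delta : A\delta=0,\ \delta_i\ge 0 \text{ whenever } x^\star_i=0\}$, read off polyhedrality and $\FD(x^\star)\subset\Ker A$, and then treat $F_0$. Your version is more explicit than the paper's. The paper asserts the equivalence without your finite-minimum choice of $\varepsilon$, and it justifies $\FD(x^\star)\subset F_0$ only by $[d]\setminus I_0\subset J$, which by itself gives just $\delta_i\ge 0$; your argument from $x^\star_i+\varepsilon\delta_i=0$ is what actually forces $\delta_i=0$.
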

The extreme directions $D(x^\star)$ represent the ``edges’’ emanating from $x^\star$ in the polyhedral structure of $\cX$, connecting $x^\star$ to its neighboring extreme points. In linear programs, optimal solutions are attained at extreme points $\cX^\angle$.
Every extreme point is associated with a set of cost vectors $c$ for which it is optimal. This set forms a cone, as illustrated in Figure \ref{fig: optimality cones}.
\begin{proposition}[Optimality Cones]\label{optimality cone def}
    For every $x^\star\in \cX^\angle$, we denote $\Lambda(x^\star)=\{c\in \R^d \; :\; x^\star \in \arg\min_{x\in \cX}c^\top x\}$. We have $\Lambda(x^\star)=\{c\in \R^d,\; \forall \delta \in D(x^\star),\ c^\top \delta\geq 0\}.$ For every $\delta \in D(x^\star),$ we denote $F(x^\star,\delta):=\Lambda(x^\star)\cap \{\delta\}^\perp$ the face of the cone $\Lambda(x^\star)$ that is perpendicular to $\delta$. Furthermore, $\Lambda(x^\star)$ is the dual cone of $\FD(x^\star)$.
\end{proposition}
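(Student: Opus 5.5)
The plan is to prove the two claims in \cref{optimality cone def} directly from the structure of feasible directions given by \cref{feasible directions polyhedral cone}. I would first show that $\Lambda(x^\star)$ equals the dual cone $\{c:\ c^\top\delta\ge 0\ \forall \delta\in \FD(x^\star)\}$. From this, the description via extreme directions follows because $\FD(x^\star)$ is a polyhedral cone.

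\textbf{Step 1: $\Lambda(x^\star) \subset \FD(x^\star)^*$.} Let $c\in\Lambda(x^\star)$ and $\delta\in\FD(x^\star)$. Choose $\varepsilon>0$ with $x^\star+\varepsilon\delta\in\cX$. Optimality of $x^\star$ gives $c^\top(x^\star+\varepsilon\delta)\ge c^\top x^\star$, hence $c^\top\delta\ge 0$.

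\textbf{Step 2: $\FD(x^\star)^* \subset \Lambda(x^\star)$.} Take $c$ with $c^\top\delta\ge0$ for all feasible directions, and let $x\in\cX$. By convexity of $\cX$, $x^\star+\varepsilon(x-x^\star)\in\cX$ for $\varepsilon=1$, so $\delta=x-x^\star\in\FD(x^\star)$. Therefore $c^\top x-c^\top x^\star\ge 0$, so $x^\star$ is optimal. Combining Steps 1 and 2 gives $\Lambda(x^\star)=\FD(x^\star)^*$, which is the final claim of the statement.

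\textbf{Step 3: reduce to extreme directions.} Since $\FD(x^\star)$ is polyhedral, I need it to be generated by its extreme directions, i.e.\ $\FD(x^\star)=\mathrm{cone}(D(x^\star))$. This holds when the cone is pointed, which is where I expect the main obstacle. Pointedness comes from $x^\star$ being extreme. Suppose $\delta$ and $-\delta$ both lie in $\FD(x^\star)$ with $\delta\ne0$. Then $x^\star\pm\varepsilon\delta\in\cX$ for some small common $\varepsilon$, obtained by taking the minimum of the two $\varepsilon$'s and using convexity. This writes $x^\star$ as the midpoint of two distinct points of $\cX$, a contradiction.

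To make the cone structure explicit, I would write $\FD(x^\star)=\{\delta:\ A\delta=0,\ \delta_i\ge0 \text{ for } i \text{ with } x^\star_i=0\}$. This is a finitely generated, line-free cone, so Minkowski--Weyl for pointed polyhedral cones gives that it is the conic hull of its finitely many extreme rays. Then $c^\top\delta\ge0$ on $D(x^\star)$ extends by nonnegative combinations to all of $\FD(x^\star)$. The reverse implication is trivial because $D(x^\star)\subset\FD(x^\star)$. This yields $\Lambda(x^\star)=\{c:\ c^\top\delta\ge0\ \forall\delta\in D(x^\star)\}$.

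Finally, $F(x^\star,\delta)=\Lambda(x^\star)\cap\{\delta\}^\perp$ is a face of $\Lambda(x^\star)$, since $c^\top\delta\ge0$ is a valid inequality for the cone. This part is definitional and needs no further argument.
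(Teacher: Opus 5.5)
Your proof is correct and follows the paper's route. The paper's proof is the single chain $x^\star\in\arg\min_{x\in\cX}c^\top x \iff c^\top\delta\ge 0$ for all $\delta\in\FD(x^\star)$ $\iff$ $c^\top\delta\ge 0$ for all $\delta\in D(x^\star)$, stated without justification. You supply the arguments it omits: convexity for the first equivalence, and pointedness of $\FD(x^\star)$ (from extremality of $x^\star$) together with Minkowski--Weyl for the second.
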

Notice that since $\cX$ is bounded, for any $c\in \R^d$, there exists $x^\star \in \cX^\angle$ such that $c\in \Lambda (x^\star)$, and consequently $\R^d=\bigcup_{x^\star \in \cX^\angle}\Lambda(x^\star)$ as illustrated in \cref{fig: optimality cones}. Neighboring cones share boundaries corresponding to their faces (\cref{fig: optimality cones}, middle), where multiple solutions can be optimal.

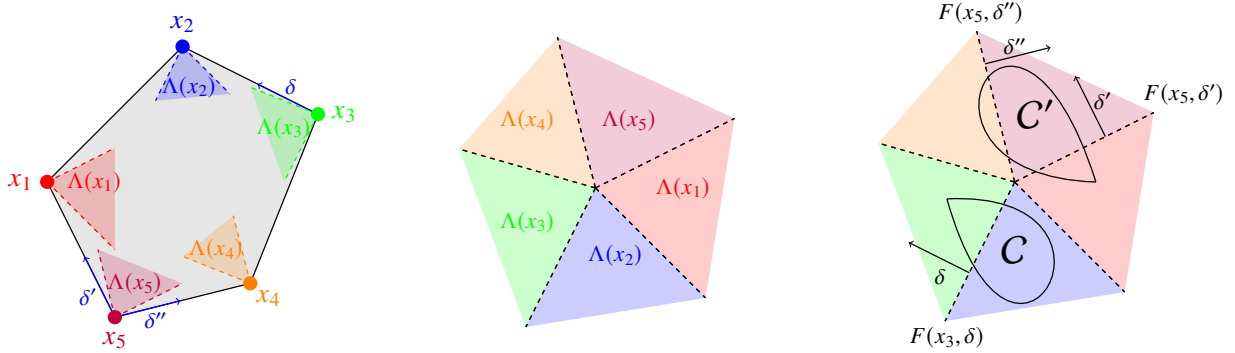
\begin{figure}[h] 
    \centering
    \begin{minipage}{0.33\textwidth} 
        \centering
        \scalebox{0.6}{              \begin{tikzpicture}[scale=1.5]

\coordinate (A) at (0,0);
\coordinate (A1) at (1,-2/2);
\coordinate (A2) at (1,1/2);
\coordinate (B) at (2,2);
\coordinate (B1) at (2 -1*0.4,2-2*0.4);
\coordinate (B2) at (2 +1*0.7,2 -1*0.7);
\coordinate (C) at (4,1);
\coordinate (C1) at (4 -1*0.5,1-2*0.5);
\coordinate (C2) at (4 -2.5*0.4,1 +1*0.4);
\coordinate (D) at (3,-1.5);
\coordinate (D1) at (3 -2.5*0.4,-1.5 +1*0.4);
\coordinate (D2) at (3 -1/2 * 0.5,-1.5 +2 *0.5);
\coordinate (E) at (1,-2);
\coordinate (E1) at (1 +2 * 0.5,-2+1 * 0.5);
\coordinate (E2) at (1-1/2 * 0.5,-2+2 *0.5);

\node[anchor=east, text=red, xshift=-5pt] at (A) {\Large $x_1$};
\node[anchor=south, text=blue, yshift=5pt] at (B) {\Large $x_2$};
\node[anchor=west, text=green,xshift=5pt] at (C) {\Large $x_3$};
\node[anchor=north west, text=orange] at (D) {\Large $x_4$};
\node[anchor=north, text=purple, yshift=-5pt] at (E) {\Large $x_5$};

\draw[thick, fill=gray!20] (A) -- (B) -- (C) -- (D) -- (E) -- cycle;

\draw[ thick, blue, ->] (1,-2) -- (1 - 0.447,-2 + 0.894);
\draw[ thick, blue, ->] ( 4,1 ) -- ( 4 -0.894 ,1 +0.447 );

\draw[ thick,blue, ->] ( 1,-2)-- ( 1 +0.970,-2 + 0.244);

\node[anchor=center,blue, font=\large] at (3.6,1.4) {$\delta$};
\node[anchor=center,blue, font=\large] at (0.6,-1.7) {$\delta'$};
\node[anchor=center,blue, font=\large] at (1.6,-2.05) {$\delta''$};

\fill[red] (A) circle (3pt);
\fill[blue] (B) circle (3pt);
\fill[green] (C) circle (3pt);
\fill[orange] (D) circle (3pt);
\fill[purple] (E) circle (3pt);

\draw[red, thick, dashed] (A) -- (A1);
\draw[red, thick, dashed] (A) -- (A2);
\fill[red, opacity=0.2] (A) -- (A1) -- (A2) -- cycle;

\draw[blue, thick, dashed] (B) -- (B1);
\draw[blue, thick, dashed] (B) -- (B2);
\fill[blue, opacity=0.2] (B) -- (B1) -- (B2) -- cycle;

\draw[green, thick, dashed] (C) -- (C1);
\draw[green, thick, dashed] (C) -- (C2);
\fill[green, opacity=0.2] (C) -- (C1) -- (C2) -- cycle;

\draw[orange, thick, dashed] (D) -- (D1);
\draw[orange, thick, dashed] (D) -- (D2);
\fill[orange, opacity=0.2] (D) -- (D1) -- (D2) -- cycle;

\draw[purple, thick, dashed] (E) -- (E1);
\draw[purple, thick, dashed] (E) -- (E2);
\fill[purple, opacity=0.2] (E) -- (E1) -- (E2) -- cycle;

\node[anchor=center, text=red, font=\large] at (0.7, 0) {$\Lambda(x_1)$};
\node[anchor=center, text=blue, font=\large] at (2.1, 1.4) {$\Lambda(x_2)$};
\node[anchor=center, text=green, font=\large] at (3.5, 0.8) {$\Lambda(x_3)$};
\node[anchor=center, text=orange, font=\large] at (2.5, -1) {$\Lambda(x_4)$};
\node[anchor=center, text=purple, font=\large] at (1.3,-1.5) {$\Lambda(x_5)$};

\end{tikzpicture}}
        \end{minipage}\hfill
    \begin{minipage}{0.33\textwidth} 
        \centering
        \scalebox{0.6}{        \begin{tikzpicture}[scale=1.5]

\coordinate (O) at (0,0);
\coordinate (A1) at (0.707*2.3,-0.707*2.3);
\coordinate (A2) at (0.894*2.3,0.447*2.3);

\coordinate (B1) at ( -0.447*2.3,-0.894*2.3);
\coordinate (B2) at (A1);

\coordinate (C1) at (B1);
\coordinate (C2) at ( -0.875*2.3, 0.25*2.3);

\coordinate (D1) at (C2);
\coordinate (D2) at ( -0.244*2.3,0.970*2.3);

\coordinate (E1) at (A2);
\coordinate (E2) at (D2);



\draw[ thick, dashed] (O) -- (A1);
\draw[ thick, dashed] (O) -- (A2);
\fill[red, opacity=0.2] (O) -- (A1) -- (A2) -- cycle;

\draw[ thick, dashed] (O) -- (B1);
\draw[ thick, dashed] (O) -- (B2);
\fill[blue, opacity=0.2] (O) -- (B1) -- (B2) -- cycle;

\draw[thick, dashed] (O) -- (C1);
\draw[ thick, dashed] (O) -- (C2);
\fill[green, opacity=0.2] (O) -- (C1) -- (C2) -- cycle;

\draw[ thick, dashed] (O) -- (D1);
\draw[ thick, dashed] (O) -- (D2);
\fill[orange,opacity=0.2] (O) -- (D1) -- (D2) -- cycle;

\draw[ thick, dashed] (O) -- (E1);
\draw[ thick, dashed] (O) -- (E2);
\fill[purple, opacity=0.2] (O) -- (E1) -- (E2) -- cycle;

\node[anchor=center, text=red, font=\large] at (1.3, 0) {$\Lambda(x_1)$};
\node[anchor=center, text=blue, font=\large] at (0.3, -1) {$\Lambda(x_2)$};
\node[anchor=center, text=green, font=\large] at (-1, -0.5) {$\Lambda(x_3)$};
\node[anchor=center, text=orange, font=\large] at (-1, 1) {$\Lambda(x_4)$};
\node[anchor=center, text=purple, font=\large] at (0.5,1) {$\Lambda(x_5)$};
\end{tikzpicture}}
        \end{minipage}
        \begin{minipage}{0.33\textwidth} 
        \centering
        \scalebox{0.6}{\begin{tikzpicture}[overlay,scale=1.5]

\coordinate (O) at (0,0);
\coordinate (A1) at (0.707*2.3,-0.707*2.3);
\coordinate (A2) at (0.894*2.3,0.447*2.3);

\coordinate (B1) at ( -0.447*2.3,-0.894*2.3);
\coordinate (B2) at (A1);

\coordinate (C1) at (B1);
\coordinate (C2) at ( -0.875*2.3, 0.25*2.3);

\coordinate (D1) at (C2);
\coordinate (D2) at ( -0.244*2.3,0.970*2.3);

\coordinate (E1) at (A2);
\coordinate (E2) at (D2);



\draw[ thick, dashed] (O) -- (A1);
\draw[ thick, dashed] (O) -- (A2);
\fill[red, opacity=0.2] (O) -- (A1) -- (A2) -- cycle;

\draw[ thick, dashed] (O) -- (B1);
\draw[ thick, dashed] (O) -- (B2);
\fill[blue, opacity=0.2] (O) -- (B1) -- (B2) -- cycle;

\draw[thick, dashed] (O) -- (C1);
\draw[ thick, dashed] (O) -- (C2);
\fill[green, opacity=0.2] (O) -- (C1) -- (C2) -- cycle;

\draw[ thick, dashed] (O) -- (D1);
\draw[ thick, dashed] (O) -- (D2);
\fill[orange,opacity=0.2] (O) -- (D1) -- (D2) -- cycle;

\draw[ thick, dashed] (O) -- (E1);
\draw[ thick, dashed] (O) -- (E2);
\fill[purple, opacity=0.2] (O) -- (E1) -- (E2) -- cycle;

\node[anchor=center, font=\Huge] at (0,-1) {$\cC$};
\node[anchor=center, font=\Huge] at (0.3,1) {$\cC'$};

\coordinate (O1) at ( -1,-0.25);
\coordinate (O2) at (2,-0.1);
\coordinate (O3) at (0,-3.8);
\coordinate (O4) at (-1,-1.6);

\coordinate (V1) at ( 1.2,0);
\coordinate (V2) at (-2,0.1);
\coordinate (V3) at (-0,3.8);
\coordinate (V4) at (1,1.6);
\draw[thick] 
    (V1) .. controls (V2) and (V3) ..  (V1) ;

\draw[thick] 
    (O1) .. controls (O2) and (O3) ..  (O1) ;

\draw[ thick, ->] ( -0.447*1.5,-0.894*1.5 ) -- ( -0.447*1.5 -0.894 ,-0.894*1.5 +0.447 );

\draw[ thick, ->] (0.894*1.5,0.447*1.5) -- (0.894*1.5 - 0.447,0.447*1.5 + 0.894);
\draw[ thick, ->] ( -0.244*1.8,0.970*1.8)-- ( -0.244*1.8 +0.970,0.970*1.8 + 0.244);
\node[anchor=center, font=\large] at (-1,-2.3) {$F(x_3,\delta)$};
\node[anchor=center, font=\large] at (-1.1,-1.4) {$\delta$};
\node[anchor=center, font=\large] at (1.3,1.2) {$\delta'$};
\node[anchor=center, font=\large] at (0,2) {$\delta''$};

\node[anchor=center, font=\large] at (-0.5,2.5) {$F(x_5,\delta'')$};
\node[anchor=center, font=\large] at (2.5,1.3) {$F(x_5,\delta')$};

\end{tikzpicture}}
        \end{minipage}\hfill
  \caption{\emph{Left}: Optimality cones relative to $\cX$: each extreme point has an associated cone of cost vectors making it optimal. \emph{Middle}: The same cones at the origin, illustrating their dual relationship with feasible directions. \emph{Right}: Uncertainty sets $\cC$ and $\cC'$ intersecting different cone collections. Data must distinguish between cones that overlap with the uncertainty set.}
    \label{fig: optimality cones}
\end{figure}


With the notion of optimality cones, solving a linear program for a given cost vector $c$ amounts to finding to which optimality cone it belongs. A data set is therefore sufficient if it enables to determine the optimality cone of each possible \(c \in \mathcal{C}\). As $\cC$ already restricts the location of its cost vectors, our data only needs to discriminate between cones overlapping with $\cC$ (see \cref{fig: optimality cones}).


To provide further intuition, consider the example sketched in the right of Figure \ref{fig: optimality cones}. The set $\cC$ intersects only the cones $\Lambda(x_2)$ (blue) and $\Lambda(x_3)$ (green), hence, the cost vectors can only be in these two cones. Clearly, observing their projection on the span of the extreme direction $\delta$ is sufficient to determine which of the two cones they belong to. The set $\cC'$, however, intersecting $\Lambda(x_4),\Lambda(x_5)$ and $\Lambda(x_1)$, requires projections on the span of both $\delta'$ and $\delta''$. These vectors are not arbitrary; these are extreme directions that move from one cone to another, inducing a face where both cones intersect. The illustration highlights that such vectors are necessary to capture by our data when the face they induce intersects the uncertainty set $\cC$. 
Hence, it is natural to introduce the following set of \textit{relevant extreme directions}.
\begin{definition}[Relevant Extreme Directions]
    Given $\cC\subset \R^d$, we define 
    {\setlength{\abovedisplayskip}{2pt}
 \setlength{\belowdisplayskip}{2pt}
 \setlength{\abovedisplayshortskip}{2pt}
 \setlength{\belowdisplayshortskip}{2pt}
   \begin{align*}
        \Delta(\cX,\cC)=\{\delta \in \R^d,\; \exists x^\star\in \cX^\angle,\; \delta \in D(x^\star) \text{ and }F(x,\delta)\cap \cC\neq \varnothing\}.
    \end{align*}
    }
\end{definition}
In the rightmost illustration of \cref{fig: optimality cones}, we have $\Vect \Delta(\cX,\cC)=\Vect \{\delta\}$ and $\Vect \Delta(\cX,\cC')=\Vect \{\delta',\delta''\}$, and it is necessary to observe the projections on $\Delta(\cX,\cC)$ and $\Delta(\cX,\cC')$ to recover optimal solutions for uncertainty sets $\cC$ and $\cC'$ respectively. This leads to our first main theorem. In order to state it, we first introduce two common topological notions.
\begin{definition}[Linear and Affine Hulls]
    For any nonempty set $\mathcal C\subset \R^d$ and any $c_0\in \mathcal C$, the linear hull of $\cC$ is $\dir{\cC}=\Vect(\cC - c_0) = \Vect \{x_1-x_2 \; : \; x_1,x_2 \in \cC\}$ and its affine hull is $\aff{\cC}=c_0 + \Vect (\cC - c_0)$.
\end{definition}

\begin{definition}[Relative Interior]
     For any nonempty set $\cC\subset \R^d$, we define
     {\setlength{\abovedisplayskip}{2pt}
 \setlength{\belowdisplayskip}{2pt}
 \setlength{\abovedisplayshortskip}{2pt}
 \setlength{\belowdisplayshortskip}{2pt}
     \begin{align*}
         \relint{\cC}=\{ x \in \cC, \text{ there exists } \varepsilon > 0 \text{ such that } B(x,\varepsilon) \cap \aff{\cC} \subset \cC \},
     \end{align*}}
where $B(x,\varepsilon):=\{y\in \R^d,\; \norm{x-y}<\varepsilon\}$. When $\cC = \relint{\cC}$, we say that $\cC$ is relatively open.
\end{definition}
We now formulate our first main theorem characterizing sufficient data sets for all relatively open convex uncertainty sets.
\begin{theorem} \label{thm: relatively open characterization}
             Let $\cC$ be a relatively open convex set. $\dataset$ is a \sufficient \;for $\cC$ if and only if $\Delta(\cX,\cC) \subset \dir{\cC}^\perp + \Vect \mathcal D$.
        \end{theorem}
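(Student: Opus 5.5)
We prove the two directions separately, working throughout with the projection criterion of \cref{prop:sufficient:projections}. We use the notation $V := \dir{\cC}^\perp + \Vect \cD$, so that $V^\perp = \dir{\cC}\cap (\Vect\cD)^\perp$. The key observation is this: if $c,c'\in\cC$ satisfy $c_{\Vect\cD}=c'_{\Vect\cD}$, then $c'-c \in \dir{\cC}\cap(\Vect\cD)^\perp = V^\perp$. Conversely, because $\cC$ is relatively open, for every $c\in\cC$ and $u\in V^\perp$ the point $c+tu$ lies in $\cC$ for all small $|t|$, and it has the same projection on $\Vect\cD$ as $c$.

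\textbf{Necessity.} Suppose some $\delta\in\Delta(\cX,\cC)$ is not in $V$. Then $\delta$ is not orthogonal to $V^\perp$, so we can pick $u\in V^\perp$ with $u^\top\delta<0$. Let $x^\star$ be the extreme point with $\delta\in D(x^\star)$, and pick $c\in F(x^\star,\delta)\cap\cC$. By \cref{optimality cone def}, $c\in\Lambda(x^\star)$ and $c^\top\delta=0$. For small $t>0$, the point $c'=c+tu$ lies in $\cC$ and has the same projection on $\Vect\cD$ as $c$. However, $c'^\top\delta<0$ while $\delta\in\FD(x^\star)$, so $x^\star\notin\arg\min_{x\in\cX} c'^\top x$, whereas $x^\star\in\arg\min_{x\in\cX} c^\top x$. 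By \cref{prop:sufficient:projections}, $\cD$ is not sufficient.

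\textbf{Sufficiency.} Assume $\Delta(\cX,\cC)\subset V$. Take $c,c'\in\cC$ with equal projections, and set $u=c'-c\in V^\perp$. By convexity, the whole segment $c_t=c+tu$, $t\in[0,1]$, lies in $\cC$. We show that $\arg\min_{x\in\cX} c_t^\top x$ is constant in $t$.

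Fix an extreme point $x^\star$ that is optimal for some $c_{t_0}$. For each $\delta\in D(x^\star)$, the function $t\mapsto c_t^\top\delta$ is affine in $t$. If $c_{t_0}^\top\delta=0$, then $c_{t_0}\in F(x^\star,\delta)\cap\cC$, so $\delta\in\Delta(\cX,\cC)\subset V$, hence $u^\top\delta=0$ and $c_t^\top\delta$ is identically zero. If instead $c_{t_0}^\top\delta>0$, we must rule out that $c_t^\top\delta$ becomes negative at some $t$. Let $t_1$ be the first zero of $c_t^\top\delta$ after $t_0$, and assume $x^\star$ remains optimal up to $t_1$. At $t_1$, the point $c_{t_1}$ lies in $F(x^\star,\delta)\cap\cC$, so again $u^\top\delta=0$, which contradicts the assumption that $c_t^\top\delta$ changes value.

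To make this rigorous, we use a continuity/connectedness argument. Let $T$ be the set of $t\in[0,1]$ for which $x^\star\in\arg\min_{x\in\cX} c_t^\top x$. This set is closed, since optimality cones are closed. It is also an interval, by convexity of $\Lambda(x^\star)$. At an endpoint $t_1<1$ of $T$, some $\delta\in D(x^\star)$ satisfies $c_{t_1}^\top\delta=0$ and $c_t^\top\delta<0$ just beyond $t_1$. This forces $u^\top\delta\neq0$ while $\delta\in\Delta(\cX,\cC)$, a contradiction. Hence $T=[0,1]$, and the same holds for every extreme point optimal at some $t$. The optimal extreme point sets at $c$ and $c'$ therefore coincide, and since an argmin is the convex hull of the optimal extreme points (a face of $\cX$), the argmin sets are equal. \cref{prop:sufficient:projections} then gives sufficiency.

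The main obstacle is this endpoint argument. We must ensure the boundary of $T$ is detected by an extreme direction $\delta\in D(x^\star)$ whose face $F(x^\star,\delta)$ genuinely meets $\cC$. This relies on the description $\Lambda(x^\star)=\{c: c^\top\delta\ge0\ \forall\delta\in D(x^\star)\}$ from \cref{optimality cone def}, together with the fact that $c_{t_1}\in\cC$ because the segment stays in $\cC$. Relative openness is used only in the necessity direction, to perturb $c$ within $\cC$.
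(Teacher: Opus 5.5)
Your proposal is correct and follows essentially the same route as the paper. Necessity perturbs a point of $F(x^\star,\delta)\cap\cC$ along some $u\in\dir{\cC}\cap(\Vect\cD)^\perp$ with $u^\top\delta<0$. Sufficiency follows the segment from $c$ to $c'$ until it leaves $\Lambda(x^\star)$, where the exit face gives a relevant extreme direction $\delta$ with $(c'-c)^\top\delta\neq 0$; the paper phrases this as a contrapositive using $\alpha^\star=\sup\{\alpha: c_\alpha\in\Lambda(x)\}$. The one thing to add is that your endpoint argument must also be run at the left endpoint of $T$ (equivalently, with $c$ and $c'$ swapped), since an extreme point that is optimal only at $c'$ must also be shown to be optimal at $c$.
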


        \cref{thm: relatively open characterization} is a fundamental characterization of sufficiency, by what information the data set needs to capture relative to the previously known $\cC$ and the problem structure $\cX$. The result also indicates that such a minimal data set is in general not unique. A careful reader might remark that \cref{thm: relatively open characterization} should imply \cref{prop:suff:vectorspace} when $\cC = \Re^d$. In fact, $\Delta(\cX,\R^d)$ is the set of all extreme directions of the polyhedron, which indeed precisely spans $\Ker A \cap F_0$. Furthermore, a similar intuition holds for this Theorem compared to Proposition \ref{prop:suff:vectorspace}:
{\setlength{\abovedisplayskip}{3pt}
 \setlength{\belowdisplayskip}{1pt}
 \setlength{\abovedisplayshortskip}{3pt}
 \setlength{\belowdisplayshortskip}{1pt}
\begin{align*}       
\underbrace{\Delta(\cX,\cC)}_{\text{what we need to know}} \subset \underbrace{\spc{\cC}^\perp}_{\text{what we already know}} +\underbrace{\Vect \cD}_{\text{what we measure}}.
\end{align*}

        \begin{remark}
        In Theorem \ref{thm: relatively open characterization}, only convexity is needed for the condition $ \Delta(\cX,\cC) \subset \dir{\cC}^\perp + \Vect \mathcal D$ to be sufficient, and only relative openness is needed for the condition to be necessary. In fact, a more general formulation of \cref{thm: relatively open characterization} would be
        \begin{enumerate}
            \item If $\cC$ is relatively open then: $\cD$ is \suff\ $\implies$ $\Delta(\cX,\cC) \subset \dir{\cC}^\perp + \Vect \mathcal D$.
            \item If $\cC$ is convex then: $\Delta(\cX,\cC) \subset \dir{\cC}^\perp + \Vect \mathcal D$ $\implies$ $\cD$ is \suff.
        \end{enumerate}
        \end{remark}
}
    \subsection{Algorithmically Tractable Characterization via Reachable Solutions}

    We now develop a second characterization of data set sufficiency that is particularly well-suited to algorithmic construction.

   The set $\Delta(\cX,\cC)$ of relevant extreme directions of \cref{thm: relatively open characterization} can be seen intuitively as the set of differences $x_1-x_2$ of \textit{neighboring} extreme points $x_1,x_2\in \cX^\angle$, that are optimal for some $c \in \cC$. By relaxing the ``neighboring’’ condition and optimality for a common $c\in \cC$, we arrive at a broader set of directions induced by all pairs of optimal extreme points---which we call \textit{reachable solutions}. This alternative perspective replaces the notion of relevant extreme directions with that of reachable solutions, leading to a representation of sufficiency in terms of differences between optimal solutions under cost vectors in \(\mathcal{C}\).

\begin{definition}[Reachable Solutions]
    Given $\cC\subset \R^d$, we define 
    {\setlength{\abovedisplayskip}{2pt}
 \setlength{\belowdisplayskip}{2pt}
 \setlength{\abovedisplayshortskip}{2pt}
 \setlength{\belowdisplayshortskip}{2pt}
    \begin{align*}
        \dualpoly{\cX}{\cC}:=\left\{x^\star\in \cX^\angle,\;\exists c \in \cC,\; x^\star\in \arg\min_{x\in \cX}c^\top x \right\}=\bigcup_{c\in \cC}^{}\arg\min_{x\in \cX}c^\top x.
    \end{align*} 
    }
\end{definition}

The set $\dir{\dualpoly{\cX}{\cC}}$ is equal to the span of the set of differences between \textit{any} two elements $x_1,x_2\in \cX$ such that there exists $c_1,c_2 \in \cC$ such that $x_1\in \arg\min_{x\in \cX}c_1^\top x$ and $x_2\in \arg\min_{x\in \cX}c_2^\top x$. By construction, we have $\Vect \Delta(\cX,\cC) \subset \dir{\dualpoly{\cX}{\cC}}  = \Vect \{x_1-x_2 \; : \; x_1,x_2 \in \dualpoly{\cX}{\cC} \}$ since each relevant extreme direction corresponds to a direction between optimal solutions. The following theorem shows that these quantities are indeed equal.

\begin{theorem}\label{thm:span delta is dir x}
    For any convex set $\cC\subset \R^d$, we have $\Vect \Delta(\cX,\cC)=\dir{\dualpoly{\cX}{\cC}}$.
\end{theorem}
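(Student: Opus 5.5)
We prove the two inclusions separately. The inclusion $\Vect \Delta(\cX,\cC)\subset \dir{\dualpoly{\cX}{\cC}}$ is essentially the observation made just before the statement. Take $\delta\in D(x^\star)$ with $F(x^\star,\delta)\cap\cC\ni c$. Then $c\in\Lambda(x^\star)$, so $x^\star\in\dualpoly{\cX}{\cC}$.

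Since $\delta$ is an extreme direction of the polyhedral cone $\FD(x^\star)$ (\cref{feasible directions polyhedral cone}), the ray $x^\star+t\delta$ stays in $\cX$ up to some $t^\star>0$. Its endpoint $y=x^\star+t^\star\delta$ is a neighboring extreme point, because the ray runs along an edge of the bounded polyhedron. As $c^\top\delta=0$, we get $c^\top y=c^\top x^\star=\min_{\cX}c^\top x$, so $y$ is optimal for $c$ and $y\in\dualpoly{\cX}{\cC}$. Hence $\delta=(y-x^\star)/t^\star$ lies in $\dir{\dualpoly{\cX}{\cC}}$.

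For the reverse inclusion, it suffices to show that $x_1-x_2\in\Vect\Delta(\cX,\cC)$ whenever $x_1,x_2\in\dualpoly{\cX}{\cC}$, with $x_i$ optimal for $c_i\in\cC$. We use convexity and move along the segment $c(t)=(1-t)c_2+tc_1\subset\cC$, $t\in[0,1]$. The map $t\mapsto\arg\min_{\cX}c(t)^\top x$ is piecewise constant on $[0,1]$: finitely many optimality cones cover $\R^d$, and each cone meets the segment in a closed subinterval. Choose breakpoints $0=t_0<t_1<\dots<t_k=1$ and extreme points $y_0,\dots,y_k$ with $y_j$ optimal for $c(t)$ on $[t_{j},t_{j+1}]$, $y_0=x_2$ and $y_k=x_1$. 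The end cases need a small check: $x_2$ is optimal at $t=0$, but possibly only at that single point. We handle this by inserting $x_2$ as an extra "piece" at $t=0$, which does no harm because consecutive pieces only need to share a common optimal cost vector.

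At each breakpoint $t_j$, both $y_{j-1}$ and $y_j$ are optimal for $\bar c=c(t_j)\in\cC$. It therefore remains to prove the key lemma: \emph{if two extreme points $u,v$ are both optimal for some $\bar c\in\cC$, then $u-v\in\Vect\Delta(\cX,\cC)$.} Summing these differences over the breakpoints then yields $x_1-x_2\in\Vect\Delta(\cX,\cC)$.

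The key lemma is the main obstacle, and we prove it as follows. The optimal face $G=\arg\min_{\cX}\bar c^\top x$ is itself a bounded polyhedron whose extreme points are extreme points of $\cX$. Its graph (the edge graph of the polytope $G$) is connected, and every edge of $G$ is an edge of $\cX$. We pick a path $u=w_0,w_1,\dots,w_r=v$ along edges of $G$. Each step $w_{i+1}-w_i$ is a positive multiple of an extreme direction $\delta_i\in D(w_i)$, because edges of $\cX$ at $w_i$ correspond exactly to extreme rays of $\FD(w_i)$. Both endpoints minimize $\bar c^\top x$, so $\bar c^\top\delta_i=0$. Also $\bar c\in\Lambda(w_i)$ because $w_i$ is optimal. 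Together these give $\bar c\in F(w_i,\delta_i)\cap\cC$, so $\delta_i\in\Delta(\cX,\cC)$. Telescoping along the path gives $u-v\in\Vect\Delta(\cX,\cC)$.

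The delicate points are justifying the correspondence between extreme rays of $\FD(w)$ and edges of $\cX$ (which uses boundedness of $\cX$), and connectivity of the edge graph of the face $G$; both are standard polytope facts that we would cite or prove briefly.
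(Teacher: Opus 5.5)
Your proof is correct and is essentially the paper's argument. Your key lemma matches the paper's observation that the extreme points of an optimal face are linked by edges whose directions lie in $\Delta(\cX,\cC)$. Your explicit breakpoint chain along $c(t)\subset\cC$ replaces the paper's proof by contradiction of the same $\cC$-strong connectivity of $\dualpoly{\cX}{\cC}\cap\cX^\angle$, which uses a supremum $\alpha^\star$ together with the fact that $\arg\min$ can only shrink under small perturbations of $c$.

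The one step to add is the paper's last one. If $\dualpoly{\cX}{\cC}$ is read as the full union $\bigcup_{c\in\cC}\arg\min_{x\in\cX}c^\top x$, note that every optimal point is a convex combination of optimal extreme points, so restricting to extreme $x_1,x_2$ loses nothing.
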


The converse inclusion proven in this theorem is not immediate. In fact, for a general polyhedron $\cX$ and $\cC$ (see \cref{fig: optimality cones} with $\cC'$ for eg.), $\Delta(\cX,\cC)$ is much smaller than the set of differences of elements in $\dualpoly{\cX}{\cC}$ but their spans are equal. The proof (Appendix \ref{proof thm:span delta is dir x}) exploits a \emph{C-strong connectivity} property (\cref{def:c-strong connectivity}): for any $x,x'\in \dualpoly{\cX}{\cC}\cap \cX^\angle$, there exists a sequence of extreme points $x_1,\dots,x_h \in \dualpoly{\cX}{\cC}$ such that $x_1=x$ and $x_h=x'$ and for any $i\in [h-1]$, $x_{i+1}-x_i\in \Delta(\cX,\cC)$. In other words, $x_{i+1},x_i$ are neighbors and are both optimal for some $c_i\in \cC$. This implies that every element in \(\dir{\dualpoly{\mathcal{X}}{\mathcal{C}}}\) can be written as a finite linear combination of elements in \(\Delta(\mathcal{X}, \mathcal{C})\), completing the equality. Relating again to \cref{prop:suff:vectorspace} of the case $\cC = \Re^d$, careful linear algebra shows that indeed $\dir{\cX^\star(\Re^d)} = \dir{\cX} = \Ker A \cap F_0$.
\cref{thm: relatively open characterization} implies that to construct a \sufficient\, it suffices to find
a basis of $\dir{\dualpoly{\cX}{\cC}}$ rather than $\Vect \Delta(\cX,\cC)$, which is a much simpler task. The following result will indeed be the basis of our algorithm in the next section.

\begin{theorem}\label{cor:dualpoly-carac}
    Let $\cC$ be a relatively open convex set. $\dataset$ is a \sufficient \;for $\cC$ if and only if $\dir{\dualpoly{\cX}{\cC}} \subset \dir{\cC}^\perp + \Vect \mathcal D$.
\end{theorem}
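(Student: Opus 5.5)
This is a direct combination of the two preceding theorems. Let $\cC$ be relatively open and convex. By \cref{thm: relatively open characterization}, $\cD$ is a \sufficient\ for $\cC$ if and only if $\Delta(\cX,\cC) \subset \dir{\cC}^\perp + \Vect \cD$. So it suffices to show that this inclusion is equivalent to $\dir{\dualpoly{\cX}{\cC}} \subset \dir{\cC}^\perp + \Vect \cD$.

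The key observation is that $\dir{\cC}^\perp + \Vect \cD$ is a linear subspace, since it is a sum of two subspaces. A set is contained in a subspace exactly when its span is. Hence
$$\Delta(\cX,\cC) \subset \dir{\cC}^\perp + \Vect \cD \iff \Vect \Delta(\cX,\cC) \subset \dir{\cC}^\perp + \Vect \cD.$$
Since $\cC$ is convex, \cref{thm:span delta is dir x} gives $\Vect \Delta(\cX,\cC) = \dir{\dualpoly{\cX}{\cC}}$. Substituting this into the right-hand side yields the claimed characterization.

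The argument has the following order:
\begin{enumerate}
\item Invoke \cref{thm: relatively open characterization}.
\item Pass to spans, using that the target set is a subspace.
\item Replace the span by $\dir{\dualpoly{\cX}{\cC}}$ via \cref{thm:span delta is dir x}.
\end{enumerate}

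There is no real obstacle here; the substantive content is carried by the two theorems being invoked. The only point needing care is the hypotheses. \cref{thm: relatively open characterization} needs relative openness for the necessity direction and convexity for sufficiency, while \cref{thm:span delta is dir x} needs only convexity. Both are satisfied by assumption.
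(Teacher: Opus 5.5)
Your proposal is correct and follows essentially the same route as the paper. The paper gives no separate proof; it obtains this theorem by combining \cref{thm: relatively open characterization} with \cref{thm:span delta is dir x}, using exactly your observation that a set lies in the subspace $\dir{\cC}^\perp + \Vect \cD$ if and only if its span does.
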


The characterization of \cref{cor:dualpoly-carac} has further important implications on the generality of our result. In fact, our characterization generalizes seamlessly to Mixed Integer Programs (MIPs).
 
\begin{corollary} \label{cor:MIP-sufficiency}
For any set $\cC\subset \R^d$, a set $\cD\subset \R^d$ is a \sufficient{} for uncertainty set $\cC\subset \R^d$ and decision set $\cX\cap \{0,1\}^d$ if and only if $\dir{\dualpoly{(\cX \cap\{0,1\}^d)}{\cC}}\subset \dir{\cC}^\perp + \Vect \cD$.   
\end{corollary}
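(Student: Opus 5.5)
The plan is to work directly with the finite feasible set $S:=\cX\cap\{0,1\}^d$ and the projection characterization of \cref{prop:sufficient:projections}. That proposition uses nothing about $\cX$ beyond the definition of sufficiency, so it holds verbatim for $S$: $\cD$ is sufficient if and only if $c,c'\in\cC$ with $c-c'\perp\Vect\cD$ always satisfy $\arg\min_{x\in S}c^\top x=\arg\min_{x\in S}c'^\top x$. The two inclusions are then handled separately.

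\emph{Sufficiency of the condition} (this direction should hold for arbitrary $\cC$). Assume $\dir{\dualpoly{S}{\cC}}\subset\dir{\cC}^\perp+\Vect\cD$, and take $c,c'\in\cC$ with $c-c'\perp\Vect\cD$. Since also $c-c'\in\dir{\cC}$, the vector $c-c'$ is orthogonal to $\dir{\cC}^\perp+\Vect\cD$, and hence to $\dir{\dualpoly{S}{\cC}}$. Pick $x\in\arg\min_S c^\top x$ and $x'\in\arg\min_S c'^\top x$; both lie in $\dualpoly{S}{\cC}$. Then
\begin{align*}
c^\top x'-c^\top x=c'^\top(x'-x)+(c-c')^\top(x'-x)=c'^\top(x'-x)\le 0,
\end{align*}
so $x'\in\arg\min_S c^\top x$. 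This gives $\arg\min_S c'^\top x\subset\arg\min_S c^\top x$, and the reverse inclusion follows by symmetry. No convexity is used in this step.

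\emph{Necessity.} Here I must flag an obstacle: for completely arbitrary $\cC$ the implication fails. For a singleton $\cC=\{c_0\}$ whose minimizer over $S$ is not unique, $\cD=\varnothing$ is trivially sufficient, yet $\dir{\dualpoly{S}{\cC}}\neq\{0\}=\dir{\cC}^\perp+\Vect\cD$ fails to contain it. I would therefore prove necessity under the standing assumption of \cref{cor:dualpoly-carac}, namely that $\cC$ is relatively open and convex.

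Fix $v\in\dir{\cC}\cap(\Vect\cD)^\perp$. The target $\dir{\dualpoly{S}{\cC}}\subset\dir{\cC}^\perp+\Vect\cD$ is equivalent to showing $v^\top x$ is constant on $\dualpoly{S}{\cC}$. The local step comes first. For $c\in\cC$, relative openness gives $c+tv\in\cC$ for small $t>0$, and $c+tv$ has the same projection on $\Vect\cD$ as $c$. Sufficiency therefore forces $\arg\min_S(c+tv)^\top x=\arg\min_S c^\top x=:A(c)$. Because $S$ is finite, for small $t$ the left side equals $\arg\min_{x\in A(c)}v^\top x$. Equality forces $v^\top$ to be constant on $A(c)$.

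The global step chains these local facts. Given $x_1\in A(c_1)$ and $x_2\in A(c_2)$, consider the segment $c_s=(1-s)c_1+sc_2\subset\cC$, which uses convexity. Since $S$ is finite, $s\mapsto A(c_s)$ is piecewise constant with finitely many breakpoints. At each breakpoint $s_0$, $A(c_{s_0})$ contains the argmin sets on both adjacent open intervals, by continuity of the finitely many affine functions $s\mapsto c_s^\top x$. Consecutive argmin sets therefore overlap inside some $A(c_{s_0})$ on which $v^\top$ is constant. Chaining from $s=0$ to $s=1$ yields $v^\top x_1=v^\top x_2$.

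I expect this breakpoint bookkeeping to be the main technical step. An alternative that avoids it is to apply \cref{cor:dualpoly-carac} to the polytope $\conv S$. Every binary point is a vertex of the cube and hence of $\conv S$, so $\arg\min_S c^\top x$ is exactly the vertex set of the face $\arg\min_{\conv S}c^\top x$. Sufficiency and the sets $\dualpoly{\cdot}{\cC}$ then coincide for $S$ and $\conv S$, up to rewriting $\conv S$ in the standard form the paper assumes.
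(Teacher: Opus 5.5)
Your restriction of necessity to relatively open convex $\cC$ is right, but the singleton counterexample you give for it does not work. For $\cC=\{c_0\}$ one has $\dir{\cC}=\{0\}$, so $\dir{\cC}^\perp=\R^d$. The inclusion $\dir{\dualpoly{S}{\cC}}\subset\dir{\cC}^\perp+\Vect\cD$ then holds trivially, which is consistent with $\cD=\varnothing$ being sufficient.

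Necessity does genuinely fail for general $\cC$, and the paper's own proof only covers the relatively open convex case, since it invokes \cref{cor:dualpoly-carac}. A correct counterexample runs as follows.
\begin{itemize}
\item Take $\cX=\{x\geq 0: x_1+x_2=1\}$, so $S=\{e_1,e_2\}$. Let $\cC$ be the union of the open balls of radius $\tfrac14$ around $(0,1)$ and $(1,0)$, and let $\cD=\{e_1\}$.
\item The argmin is $\{e_1\}$ on the first ball and $\{e_2\}$ on the second. The observation $c_1$ tells the two balls apart, so $\cD$ is sufficient.
\item Yet $\dir{\cC}=\R^2$ and $\dir{\dualpoly{S}{\cC}}=\Vect\{e_1-e_2\}\not\subset\Vect\{e_1\}$.
\end{itemize}
This shows convexity is needed even for open $\cC$. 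The closed square $\cC=[-\tfrac12,\tfrac12]\times[\tfrac12,\tfrac32]$ with $\cD=\{(1,-2)\}$ shows relative openness is needed as well: the observation $c_1-2c_2$ equals $-\tfrac12$ only at the unique tie point $(\tfrac12,\tfrac12)$, and the argmin is $\{e_1\}$ everywhere else.

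Under that hypothesis, your main argument is correct and takes a different route from the paper's. The paper's proof is essentially your closing alternative: it passes from $S$ to $\conv S$ and applies \cref{cor:dualpoly-carac}. That result rests on the relevant-extreme-direction characterization (\cref{thm: relatively open characterization}) and on the $\cC$-strong connectivity argument behind \cref{thm:span delta is dir x}.

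The paper asserts only that the reachable directions of $S$ and $\conv S$ agree. It leaves implicit the two facts you supply:
\begin{itemize}
\item $\arg\min_S c^\top x$ is exactly the vertex set of the face $\arg\min_{\conv S}c^\top x$, so sufficiency transfers between $S$ and $\conv S$;
\item $\conv S$ has to be recast in the standard form the LP theory assumes.
\end{itemize}

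Your direct route avoids optimality cones and extreme directions entirely. The sufficiency direction is a two-line computation valid for every $\cC$. Necessity combines a local perturbation along $v$, which forces $v^\top x$ to be constant on each argmin set, with chaining along a segment in $\cC$; this is a lightweight substitute for the connectivity lemma. It works verbatim for any finite feasible set, not only binary points.

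What the paper's route buys in exchange is the link to the geometry of $\conv S$: the required directions are spanned by relevant edges of $\conv S$. It pays for this with the unstated reduction above.
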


We must point out, however, that computing $\dir{\dualpoly{(\cX \cap\{0,1\}^d)}{\cC}}$ is challenging compared to computing $\dir{\dualpoly{\cX}{\cC}}$, as the tools used in \ref{sec:algorithm:dir} do not apply to data collection for solving MIPs.

\section{Data Collection}\label{sec:algorithm}

\subsection{A Data Collection Algorithm: Finding Minimal Sufficient data set}
\label{sec:algorithm:dir}
We now turn to the practical problem of selecting a minimal---i.e., smallest or least costly---data set $\cD$ that enables generalization from prior contextual knowledge (captured by \(c \in \mathcal{C}\)) to a specific decision-making task (defined by \(\mathcal{X}\)). 

In many practical settings, data collection is subject to constraints on what can be queried. We model this by restricting the data set to lie in a predefined query set \(\mathcal{Q} \subset \mathbb{R}^d\), so that \(\mathcal{D} \subseteq \mathcal{Q}\). For example, in the subway line problem mentioned in \cref{sec:intro}, $\cQ$ can be the set of canonical basis vectors---a data point is revealing the cost of building in a specific road in the city. \cref{cor:dualpoly-carac} implies that to find a minimal \suff\ data set $\cD$, we need to address two distinct challenges: compute $\dir{\dualpoly{\cX}{\cC}}$, then construct a minimal $\cD \subset \cQ$ verifying the spanning condition $\dir{\dualpoly{\cX}{\cC}} \subset \dir{\cC}^\perp + \Vect \mathcal D$. Intuitively, $\dir{\dualpoly{\cX}{\cC}}$ captures precisely the information needed to solve the problem $\min_{x\in \cX}c^\top x$ given the prior information $c \in \cC$. The second step, which can be done separately and uses the output of step 1 as input, accounts for the data collection constraints $\cQ$ and will be addressed in detail in \cref{sec:query constraints}.

In the remainder of this section, we will focus on the first step, i.e. to compute $\dir{\dualpoly{\cX}{\cC}}$ and construct a basis for it. 
We can write 
\(
    \dir{\dualpoly{\cX}{\cC}}=\Vect \{x_0 - x,\; x\in \dualpoly{\cX}{\cC}\}
\)
for some $x_0 \in \dualpoly{\cX}{\cC}$. Hence, to compute $\dir{\dualpoly{\cX}{\cC}}$, we can iteratively add elements of it while ensuring we increase the dimension at every step. This is formalized in Algorithm \ref{alg:meta_alg_dir}.

\begin{algorithm}[h]
\caption{Conceptual Algorithm Computing $\dir{\dualpoly{\cX}{\cC}}$}
\label{alg:meta_alg_dir}

    \KwIn{Decision set $\cX$, Uncertainty set $\cC$.}
    \KwOut{A basis $\cD\subset \R^d$ of $\dir{\dualpoly{\cX}{\cC}}$.}
    Initialize $\cD$ to $\varnothing$.
    
    Set $x_0\in \arg\min_{x\in \cX}c_0^\top x$ for some $c_0 \in \cC$.
    
    \textbf{while} there exists $c\in \cC$, $x^\star \in \argmin_{x\in\cX} c^\top x$ such that $x^\star-x_0 \not \in \Vect{\cD}$.

    \quad $\cD \leftarrow \cD \cup \{x^\star-x_0\}$.
    
    \textbf{return} $\cD$
\end{algorithm}

The main step in Algorithm \ref{alg:meta_alg_dir} (condition of the while loop) can be seen as verifying whether the optimization problem
{\setlength{\abovedisplayskip}{2pt}
 \setlength{\belowdisplayskip}{2pt}
 \setlength{\abovedisplayshortskip}{2pt}
 \setlength{\belowdisplayshortskip}{2pt}
\begin{align}
    \sup \{\ \|\proj{(\Vect \cD)^\perp}{x^\star - x_0}\|  \; : \; c \in \cC, \; x^\star \in \argmin_{x \in \cX} c^\top x\},\label{eq:alg_optimization_pb}
\end{align} }
where $\proj{(\Vect \cD)^\perp}{\cdot}$ is the projection map onto $(\Vect \cD)^\perp$, has a solution with a non-zero objective. This optimization problem has two main challenges: first, it entails the inherently difficult task of maximizing a convex objective, and second, it has a bilinear, bi-level constraint $x^\star \in \argmin_{x \in \cX} c^\top x$ as both $c$ and $x^\star$ are variables and $x^\star$ must be an optimal solution to a linear program parameterized by c.

\textbf{Linearizing the objective.}
Remark that if $\alpha$ is a randomly generated Gaussian vector, then any vector $v$, with $\|v\|>0$, satisfies $\Prob(\alpha^\top v = 0) = 0$.
Hence, if Problem \eqref{eq:alg_optimization_pb} admits a solution $\bar{x}$ verifying $\|\proj{(\Vect \cD)^\perp}{\bar{x} - x_0}\|> 0$, then $\alpha^\top \proj{(\Vect \cD)^\perp}{\bar{x} - x_0} \neq 0$ with probability $1$, and therefore
either maximizing or minimizing $\alpha^\top \proj{(\Vect \cD)^\perp}{x^\star - x_0}$ must lead a non-zero objective with probability $1$. This is a linear objective as the projection onto a subspace is linear.


\textbf{Linearizing the bilinear, bilevel constraint.}
To address the second challenge, we use complementary slackness conditions, which characterize the optimal solutions of linear programs. We replace $x^\star \in \argmin_{x \in \cX} c^\top x, \; c \in \cC$ by 
{\setlength{\abovedisplayskip}{1pt}
 \setlength{\belowdisplayskip}{1pt}
 \setlength{\abovedisplayshortskip}{1pt}
 \setlength{\belowdisplayshortskip}{1pt}
\begin{align*}
    &x^\star \geq 0, \; s \geq 0, \; \lambda \in \Re^m, \; c \in \cC, \\
    &Ax^\star = b, \; A^\top \lambda + s = c, \; x^\star_is_i = 0, \; \forall i \in [d] 
\end{align*}
}
The bilinear constraint $x^\star_is_i = 0$ can be linearized by introducing a binary variable $\tau_i \in \{0,1\}$ and adding the constraint $1- \epsilon s_i \geq \tau_i \geq \epsilon x^\star_i$ with $\epsilon>0$ a small constant. When \(\mathcal{C}\) is a polyhedron, the resulting formulation is a mixed-integer linear program (MILP) with linear constraints and objectives.

Putting everything together gives Algorithm \ref{alg:LP_case} for linear programs. The algorithm will terminate in exactly $\dim \dir{\dualpoly{\cX}{\cC}}$  (\cref{thm:alg_termination}). When $\cC$ is a polyhedron, each iteration involves solving a mixed integer program with $O(d+m)$ variables and $O(d+m + \mathrm{constr}(\cC))$ constraints where $\mathrm{constr}(\cC)$ is the number of constraints defining $\cC$.   Even though solving MILPs is NP-hard in general, modern MILP solvers can handle the required computations effectively for problems of moderate size such as the examples in \cref{sec:applications}.

\begin{algorithm}[h]
    \caption{Computing $\dir{\dualpoly{\cX}{\cC}}$}
     \label{algorithm to compute delta}
    \KwIn{Polyhedron $\cX = \{x \geq 0 \; : \; Ax=b\}$, Uncertainty set $\cC$.}
    \KwOut{A basis of $\dir{\dualpoly{\cX}{\cC}}$.}
    Initialize $\cD$ to $\varnothing$.
    
    Set $x_0\in \arg\min_{x\in \cX}c_0^\top x$ for some $c_0 \in \cC$.

    Sample $\alpha \sim \mathcal{N}(0,Id)$.
    
    \textbf{while} either of the problems 
{\setlength{\abovedisplayskip}{1pt}
 \setlength{\belowdisplayskip}{1pt}
 \setlength{\abovedisplayshortskip}{1pt}
 \setlength{\belowdisplayshortskip}{1pt}
    \begin{align*}
        \min/ \max &\;\alpha^\top \proj{(\Vect \cD)^\perp}{x_0-x} \\
        \text{s.t.}&\;x \geq 0,\; \lambda \in \R^m,\; s\in \R_+^d,\; c\in \cC \\
        &Ax=b, \; A^\top \lambda +s=c,\\ &1- \epsilon s_i \geq \tau_i \geq \epsilon x_i, \; \tau_i \in \{0,1\}, \; \forall i
    \end{align*}
}
    \quad has a solution $x^\star$ with non-zero optimal value,
    
    \quad $\cD \leftarrow \cD \cup \{x^\star - x_0\}$.
    
    \quad resample $\alpha \sim \mathcal{N}(0,Id)$. \Comment{\textit{To maintain randomization independence across iterations}}
    
    \textbf{return} $\cD$
\label{alg:LP_case}
\end{algorithm}

\begin{theorem}[Correctness]\label{thm:alg_termination}
   Algorithm \ref{algorithm to compute delta} terminates with probability $1$ after $\dim \dir{\dualpoly{\cX}{\cC}} \leq d$ steps and outputs a basis of $\dir{\dualpoly{\cX}{\cC}}$.
\end{theorem}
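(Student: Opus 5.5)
Let $V=\dir{\dualpoly{\cX}{\cC}}$ and write $\cD_k$ for the set $\cD$ after $k$ insertions. We will prove two invariants: (i) every vector added lies in $V$, and (ii) each added vector is linearly independent of the current $\cD_k$. Together they show $\cD_k$ is always a linearly independent subset of $V$, so there are at most $\dim V \le d$ insertions. We then show that, with probability $1$, the loop cannot stop while $\Vect \cD_k \neq V$. Hence at termination $\cD$ is a basis of $V$, obtained after exactly $\dim V$ steps.

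\textbf{Feasible points of the MILP are exactly the reachable pairs.} We show that a tuple $(x,\lambda,s,c,\tau)$ is feasible if and only if $c\in\cC$ and $x\in\argmin_{x'\in\cX}c^\top x'$.

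For one direction, take a feasible tuple. The constraint $1-\epsilon s_i\ge\tau_i\ge\epsilon x_i$ with $\tau_i\in\{0,1\}$ forces $x_i=0$ when $\tau_i=0$ and $s_i=0$ when $\tau_i=1$, so $x_is_i=0$. Thus $x$ and $(\lambda,s)$ are primal and dual feasible and satisfy complementary slackness, so $x$ is optimal for $c$.

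For the converse, take $c\in\cC$ and an optimal extreme point $x$. LP duality gives $(\lambda,s)$ satisfying complementary slackness. Set $\tau_i=1$ exactly when $x_i>0$. This is feasible provided $\epsilon\le 1/\max_i x_i$ and $\epsilon\le 1/\max_i s_i$.
\begin{itemize}
\item The bound on $x$ is uniform because $\cX$ is bounded.
\item The bound on $s$ is the subtle point. We choose $\epsilon$ small enough for the bounded dual solutions we use, or, if $\cC$ is unbounded, we rescale $c$. Rescaling is harmless when $\cC$ is a cone; otherwise we adopt the implicit assumption that $\epsilon$ is chosen valid, i.e.\ the MILP represents the bilevel constraint.
\end{itemize}
Consequently, every $x^\star$ returned satisfies $x^\star-x_0\in V$, which is invariant (i).

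\textbf{Independence of added vectors.} A returned solution has nonzero value $\alpha^\top \proj{(\Vect\cD_k)^\perp}{x_0-x^\star}$. The projection is therefore nonzero, so $x^\star-x_0\notin\Vect\cD_k$, which is invariant (ii).

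\textbf{No premature termination (main step).} Suppose $\Vect\cD_k\subsetneq V$. By definition of $V$ and the fact that $V$ is spanned by the differences $x-x_0$ with $x\in\dualpoly{\cX}{\cC}$, some reachable $\bar x$ has $\bar v:=\proj{(\Vect\cD_k)^\perp}{x_0-\bar x}\neq 0$.

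The objective is linear in $x$, namely $\alpha^\top P(x_0-x)$ with $P$ the projection. Since $\alpha$ is resampled independently of $\cD_k$ and $\bar v\ne0$, we have $\alpha^\top\bar v\neq 0$ almost surely. So either the min or the max problem has a feasible point with nonzero value, and the corresponding optimum (max $\ge$ that value, or min $\le$ it) is nonzero.

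One must also check that the optimum is attained. When $\cC$ is a polyhedron, the feasible set is a finite union of polyhedra and the objective is bounded, since $x$ ranges over the bounded $\cX$; hence a MILP optimum exists. Otherwise the supremum is still achieved, because $x$ effectively ranges over the finite set $\cX^\angle$ when the solver returns vertices. Some care is needed here: the MILP may return a non-extreme optimal $x$. This is still fine, because any optimal $x$ lies in the convex hull of optimal extreme points, so $x-x_0\in V$ still holds.

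\textbf{Counting and probability.} Each iteration fails to detect a deficiency with probability $0$, and there are at most $d$ iterations. A union bound then shows that with probability $1$ the loop stops only when $\Vect\cD=V$. By invariants (i) and (ii), $\cD$ is then a basis of $V$ after exactly $\dim V\le d$ steps.

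The main obstacle I anticipate is the rigorous justification of the MILP linearization, namely the choice of $\epsilon$ relative to the dual slacks. I would first try to handle it by noting that only the direction of $c$ matters when $\cC$ is a cone. Failing that, I would state it as a standing assumption that $\epsilon$ is small enough, equivalently a big-M bound.
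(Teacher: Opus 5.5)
Your proposal is correct and follows the paper's proof essentially step for step. Both arguments identify the MILP feasible points with pairs $(c,x^\star)$ where $c\in\cC$ and $x^\star$ is optimal for $c$. Both show that each inserted vector lies in $\dir{\dualpoly{\cX}{\cC}}$ outside the current span, so the dimension grows by one. Both use the freshly resampled Gaussian $\alpha$ to detect, with probability $1$, any part of $\dir{\dualpoly{\cX}{\cC}}$ not yet covered.

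You also address points the paper's proof takes for granted, since it simply asserts that the constraints encode complementary slackness:
\begin{itemize}
\item choosing $\epsilon$ small enough relative to the dual slacks,
\item attainment of the MILP optimum,
\item the MILP returning a non-extreme optimal $x$.
\end{itemize}
These additions do not change the argument.
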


\subsection{Finding Minimal data sets under Data Collection Constraints}\label{sec:query constraints}
\cref{sec:algorithm:dir} provides an algorithm that computes a basis for $\spc{\dualpoly{\cX}{\cC}}$. The next step is to find a smallest data set $\cD \subset \cQ$ that is sufficient, i.e. $\spc{\dualpoly{\cX}{\cC}}\subset \spc{\cC}^\perp + \Vect \cD$ (\cref{cor:dualpoly-carac}) given some data collection constraint set $\cQ \ \subset \R^d$. Real applications impose diverse constraints on what can be queried. For example, if our task is to build a subway line at minimal cost (a shortest path problem), a natural setting is where we can only query building costs along individual edges of the graph. This corresponds to the case where $\cQ$ is the canonical basis of $\R^d$. Another example is where we can query the cost of feasible decisions. This is $\cQ = \cX$. If our problem is a combinatorial problem that can be relaxed into an LP, then feasible solutions are extreme points of the polyhedron and $\cQ = \cX^\angle$. Another relevant case is when we can query the costs of feasible solutions in a another polyhedron $\mathcal Y$ at data collection time, with the goal of informing the optimal solution of a different polyhedron $\cX$ at decision time. This would be the case of $\cQ = \mathcal Y$, a convex set.

In what follows, we study the problem of finding the smallest sufficient data set $\cD\subset \cQ$ (i.e. that satisfies the inclusion in \cref{cor:dualpoly-carac}), which we formally define below.

\begin{problem}[Space coverage problem] \label{problem: space coverage problem}
    $\dualpoly{\cX}{\cC}\subset \spc{\cC}^\perp + \Vect \cQ$. Given a basis of $\text{dir}\left(\cX^\star(\cC)\right)$, find a smallest set $\cD\subset \cQ$ such that $\spc{\dualpoly{\cX}{\cC}}\subset \spc{\cC}^\perp+\Vect \cD$, i.e. a smallest sufficient data set.
    \end{problem}
 Note that the inclusion $\dualpoly{\cX}{\cC}\subset \spc{\cC}^\perp + \Vect \cQ$ is necessary for the existence of a sufficient data set.
 This subsection will be dedicated to studying this problem and introducing algorithms to solve it. We first introduce two key lemmas that are essential to our understanding of the problem.

\begin{lemma} \label{lemma:problem equivalence}
    The first inclusions in \cref{problem: space coverage problem} is to equivalent to $\proj{\spc{\cC}}{\spc{\dualpoly{\cX}{\cC}}} \subset \proj{\spc{\cC}}{\Vect \cQ}$. Similarly, the second inclusion is equivalent to $\proj{\spc{\cC}}{\spc{\dualpoly{\cX}{\cC}}} \subset \proj{\spc{\cC}}{\Vect \cD}$.
\end{lemma}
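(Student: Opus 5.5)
The plan is to reduce both equivalences to one linear-algebra fact. For subspaces $U, V \subset \R^d$ and $W = \spc{\cC}$, we have
\begin{align*}
U \subset W^\perp + V \iff \proj{W}{U} \subset \proj{W}{V},
\end{align*}
where $\proj{W}{\cdot}$ applied to a set denotes its image under the orthogonal projection onto $W$. The first inclusion of \cref{problem: space coverage problem} is read with $U = \spc{\dualpoly{\cX}{\cC}}$ and $V = \Vect \cQ$. This is the natural reading, since that inclusion should be the feasibility condition for the second one, and $\dualpoly{\cX}{\cC} \subset W^\perp + \Vect \cQ$ together with the same statement for one fixed point would give the direction version. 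If one insists on the literal set $\dualpoly{\cX}{\cC}$, the same argument applies pointwise, because the fact below only uses membership of individual vectors. The second inclusion is the case $V = \Vect \cD$. It therefore suffices to prove the displayed fact once for an arbitrary subspace $V$.

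For ($\Rightarrow$), take $u \in U$ and write $u = w' + v$ with $w' \in W^\perp$ and $v \in V$. Projection onto $W$ is linear and kills $W^\perp$, so $\proj{W}{u} = \proj{W}{v} \in \proj{W}{V}$. For ($\Leftarrow$), take $u \in U$. By hypothesis there is $v \in V$ with $\proj{W}{u} = \proj{W}{v}$. Then $\proj{W}{u - v} = 0$, i.e.\ $u - v \in W^\perp$, since $\ker \proj{W}{\cdot} = W^\perp$ by the orthogonal decomposition $\R^d = W \oplus W^\perp$. Hence $u = (u - v) + v \in W^\perp + V$.

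There is essentially no obstacle. The only point needing care is to fix the interpretation of the first inclusion (as a direction space, or pointwise as above) and to note that $W^\perp + V$ is a subspace, so the inclusion of a set can be checked elementwise. I would also remark that $\proj{W}{V}$ is itself a subspace, so the right-hand sides in the lemma are subspaces of $\spc{\cC}$. This is what makes the reformulation useful: it reduces the problem to covering a subspace of $\spc{\cC}$ by projected query vectors.
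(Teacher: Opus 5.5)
Your proof is correct and is essentially the paper's argument. For ($\Rightarrow$) the paper likewise uses that $\proj{\spc{\cC}}{\cdot}$ annihilates $\spc{\cC}^\perp$. For ($\Leftarrow$) it picks $v\in\Vect\cD$ with $\proj{\spc{\cC}}{a}=\proj{\spc{\cC}}{v}$ and writes $a=v+\proj{\spc{\cC}^\perp}{a}-\proj{\spc{\cC}^\perp}{v}$, which is exactly your $u=(u-v)+v$ with $u-v\in\spc{\cC}^\perp$.

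One caveat concerns your side remark. Under the literal reading $\dualpoly{\cX}{\cC}\subset\spc{\cC}^\perp+\Vect\cQ$, the pointwise argument only gives equivalence with $\proj{\spc{\cC}}{\dualpoly{\cX}{\cC}}\subset\proj{\spc{\cC}}{\Vect\cQ}$, not with the lemma's condition on $\spc{\dualpoly{\cX}{\cC}}$. These differ: if $\dualpoly{\cX}{\cC}$ is a single point outside $\spc{\cC}^\perp+\Vect\cQ$, the lemma's condition holds trivially but the literal inclusion fails. So the direction-space reading you adopt is the one under which the lemma actually holds.
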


The lemma above provides intuitive insight: for a data set $\cD$ to be sufficient, its projection on the space of directions along which the cost vector is uncertain should cover the set of decision-relevant directions projected on that same space, i.e. ignoring directions in $\spc{\cC}^\perp$ along which we already have full information on the cost vector. 
Notice that we can immediately deduce from the lemma above that the size of a sufficient data set $\cD$ is at least the dimension of $\proj{\spc{\cC}}{\spc{\dualpoly{\cX}{\cC}}}$. We formally define this quantity below and formalize this property in \cref{sufficient data set lowerbound}.


\begin{definition}[Missing Information]\label{def: Missing Information}
    We define the quantity of missing information as 
    {\setlength{\abovedisplayskip}{1pt}
 \setlength{\belowdisplayskip}{1pt}
 \setlength{\abovedisplayshortskip}{1pt}
 \setlength{\belowdisplayshortskip}{1pt}
    \begin{align*}
        r(\cX,\cC):=\dim \proj{\spc{\cC}}{\spc{\dualpoly{\cX}{\cC}}}=\dim \spc{\dualpoly{\cX}{\cC}} - \dim (\spc{\dualpoly{\cX}{\cC}} \cap \spc{\cC}^\perp). 
    \end{align*}}
\end{definition}

The quantity defined above is equal to the amount of information needed to find the optimal decision ($\dim \dualpoly{\cX}{\cC}$) minus the quantity of overlap between the information we need to know and the information we already know ($\dim (\spc{\dualpoly{\cX}{\cC}} \cap \spc{\cC}^\perp)$):
{\setlength{\abovedisplayskip}{3pt}
 \setlength{\belowdisplayskip}{3pt}
 \setlength{\abovedisplayshortskip}{3pt}
 \setlength{\belowdisplayshortskip}{3pt}
\begin{align*}
    \underbrace{r(\cX,\cC)}_{\text{quantity of missing information}}=\underbrace{\dim \spc{\dualpoly{\cX}{\cC}}}_{\text{quantity of necessary information}} - \underbrace{\dim (\spc{\dualpoly{\cX}{\cC}} \cap \spc{\cC}^\perp)}_{\text{quantity of relevant information already available}}.
\end{align*}
}

This natural quantity turns out to be a lower bound on the size of a sufficient data set. It is a tight lower bound, as we will see later.

\begin{lemma} \label{sufficient data set lowerbound}
   The size of a sufficient data set is at least the amount of missing information $r(\cX,\cC)$. 
\end{lemma}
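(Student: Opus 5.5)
The plan is to combine the characterization of sufficiency with \cref{lemma:problem equivalence} and a dimension count. The lemma is implicitly about the relatively open convex setting of \cref{cor:dualpoly-carac}, so I will read ``sufficient'' as meaning the spanning condition $\spc{\dualpoly{\cX}{\cC}}\subset \spc{\cC}^\perp+\Vect \cD$ (for MIPs this is exactly \cref{cor:MIP-sufficiency}). With that reading, let $\cD=\{q_1,\dots,q_N\}$ be a sufficient data set. By the second part of \cref{lemma:problem equivalence}, sufficiency is equivalent to
\[
\proj{\spc{\cC}}{\spc{\dualpoly{\cX}{\cC}}} \subset \proj{\spc{\cC}}{\Vect \cD}.
\]

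The key steps are three dimension estimates. First, the projection onto $\spc{\cC}$ is a linear map, so the image of $\Vect\cD$ is spanned by the $N$ vectors $\proj{\spc{\cC}}{q_i}$, giving $\dim \proj{\spc{\cC}}{\Vect \cD}\le \dim\Vect\cD\le N$. Second, the inclusion above gives $r(\cX,\cC)=\dim \proj{\spc{\cC}}{\spc{\dualpoly{\cX}{\cC}}}\le \dim\proj{\spc{\cC}}{\Vect\cD}$. Chaining these yields $r(\cX,\cC)\le N=|\cD|$.

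I also want to check the second expression for $r$ in \cref{def: Missing Information}, in case the reader wants it. Restricted to $V:=\spc{\dualpoly{\cX}{\cC}}$, the projection onto $\spc{\cC}$ has kernel $V\cap\spc{\cC}^\perp$. Rank--nullity then gives $\dim \proj{\spc{\cC}}{V}=\dim V-\dim(V\cap \spc{\cC}^\perp)$.

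There is no real obstacle here. The only point needing care is justifying that ``sufficient'' may be replaced by the spanning inclusion. That step rests on \cref{cor:dualpoly-carac} (necessity direction, which needs relative openness) or on \cref{thm: relatively open characterization} combined with \cref{thm:span delta is dir x}, after which the argument is linear algebra.
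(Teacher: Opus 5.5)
Your proof is correct and matches the paper's argument: the same chain $\abs{\cD}\ge\dim\Vect\cD\ge\dim\proj{\spc{\cC}}{\Vect\cD}\ge\dim\proj{\spc{\cC}}{\spc{\dualpoly{\cX}{\cC}}}=r(\cX,\cC)$, with the last inequality coming from \cref{lemma:problem equivalence} applied to the spanning-condition form of sufficiency. Your remark that this form relies on $\cC$ being relatively open and convex (via \cref{cor:dualpoly-carac}) states an assumption the paper leaves implicit.
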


\noindent
\cref{problem: space coverage problem} is NP-hard in general, as the following proposition shows.


\begin{proposition} \label{hardness result}
    \cref{problem: space coverage problem} is NP-hard.
\end{proposition}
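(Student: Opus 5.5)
We will prove NP-hardness by a polynomial reduction from a known NP-hard problem about spanning a subspace with few vectors drawn from a prescribed finite set. The natural candidate is \emph{Exact Cover by 3-Sets} (X3C), or equivalently the problem of deciding whether a target vector lies in the span of at most $k$ columns of a given matrix (the sparse representation / minimum-support problem, known to be NP-hard). We use the latter. Given a matrix $M\in\R^{n\times p}$, a vector $y\in\R^n$ and an integer $k$, the question is whether $y\in\Vect\{M_j : j\in S\}$ for some $S$ with $|S|\le k$.

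Our plan is to make the target space one-dimensional, $\spc{\dualpoly{\cX}{\cC}}=\Vect\{y\}$, and to make the query set $\cQ$ consist of the columns of $M$. We add one twist so that a basis of the target space is not trivially coverable.

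First, we build an instance of \cref{problem: space coverage problem} in which we control $\spc{\cC}$ and $\spc{\dualpoly{\cX}{\cC}}$ directly. We take $\cC=\R^d$, so that $\spc{\cC}^\perp=\{0\}$. We then choose $\cX$ with $\spc{\cX}$ equal to a prescribed subspace $V$. By the remark after \cref{thm:span delta is dir x}, $\spc{\dualpoly{\cX}{\R^d}}=\spc{\cX}=\Ker A\cap F_0$. A segment $\cX=\{x_0+t y : t\in[0,1]\}$ can be written in standard form $\{x\ge0, Ax=b\}$ by lifting with slack variables. The lifting must be done so that the queries embed consistently, which means the columns of $M$ are padded with zeros on the slack coordinates. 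The key condition to check is that the spanning condition is unaffected by the lifting; we handle this by making the lift linear and injective on the original coordinates.

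Second, we show that in this instance $\cD\subset\cQ$ is sufficient if and only if $V\subset\Vect\cD$. This follows from \cref{cor:dualpoly-carac}, which applies because $\R^d$ is relatively open and convex. When $V=\Vect\{y\}$, a sufficient data set of size at most $k$ exists if and only if $y$ is in the span of at most $k$ columns. Minimum-support representation is NP-hard even for $0/1$ matrices (via X3C: columns are indicator vectors of triples, $y=\mathbf 1$, $k=n/3$). This yields NP-hardness, since the instance, including a basis $\{y\}$ of the target space, is computed in polynomial time.

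The main obstacle is the standard-form embedding. We need $\Ker A\cap F_0$ to equal exactly the lifted $\Vect\{y\}$, while keeping the original coordinates' query structure intact and all data polynomially sized. We would first try the simplest encoding: $x=x_0+ty$ with $x_0$ large enough to keep the original coordinates nonnegative, plus one slack $s=1-t$ together with a variable $t$. This gives $\spc{\cX}=\Vect\{(y,1,-1)\}$. The queries are then $(M_j,0,0)$, which cannot span $(y,1,-1)$.

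To fix this, we eliminate $t$ and $s$ by using $x_0$ as a strictly interior offset and imposing only $x-x_0\in\Vect\{y\}$ through equality constraints $Bx=Bx_0$, where the rows of $B$ span $y^\perp$. Boundedness then comes from nonnegativity: if $y$ has a negative entry and a positive entry, the line through $x_0$ is bounded. We can force this by adding one auxiliary coordinate to $y$ with sign opposite to the rest, and including that coordinate's unit vector in $\cQ$ with budget $k+1$. We then check that this extra coordinate does not alter the equivalence.
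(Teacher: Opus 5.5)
Your proposal is correct and is essentially the paper's argument: take $\cC=\R^d$ so that $\spc{\cC}^\perp=\{0\}$, arrange for $\spc{\dualpoly{\cX}{\cC}}$ to be the line spanned by the target vector, and reduce from the NP-hard sparse representation problem $\min \norm{\phi}_0$ subject to $M\phi=v$, with $\cQ$ the columns of $M$. The only difference is that the paper takes $\cX=\conv\{0,v\}$ without checking the standard form $\{x\geq 0,\ Ax=b\}$; your construction $\{x\geq 0 : Bx=Bx_0\}$ supplies that rigor without changing the idea, using $x_0>0$, an appended coordinate of opposite sign, $e_{n+1}$ added to $\cQ$, and budget $k+1$ ($e_{n+1}$ must be selected, since it is the only query with a nonzero last coordinate).
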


To understand why the above result holds, we analyze a simple instance. Let $v\in\R^d\setminus\{0\}$, $\cX=\conv\{0,v\}$, and $\cC=\R^d$. Then $\spc{\cC}^\perp={0}$ and $\spc{\dualpoly{\cX}{\cC}}=\spc{\cX}\Vect v$, so \cref{problem: space coverage problem} reduces to finding the smallest $\cD \subset \cQ$ such that $\Vect{v}\subset \cD$. When $\cQ$ is finite, this is equivalent to finding the sparsest solution $x$ to $Mx=v$, where the columns of $M$ are the elements of $\cQ$. This problem is precisely the $\ell_0$-minimization problem, which is NP-hard (\cite{natarajan1995sparse}). 

This simple instance further intuitively indicates how the structure of $\cQ$ shapes the hardness of \cref{problem: space coverage problem}.
When $\cQ$ is a basis of $\R^d$, then $M$ is invertible, and the problem becomes computationally tractable. 
However, as we will discuss in \cref{subsubsec: Q discrete}, this insight only holds when $\cC$ is open, and the NP-hardness remains when $\cC$ is relatively open.

When $\cQ$ is a vector space, the instance we considered becomes trivial, as by assumption we have $v\in \cQ$, so we can simply take $\cD=\{v\}$ as a minimal sufficient data set. More generally, we will prove in \cref{subsubsec: Q continuous} that \cref{problem: space coverage problem} can be solved in polynomial time by solving a linear equation when $\cQ$ is a vector space.

Further special structures of $\cQ$ will admit polynomial-time algorithms.
\cref{tab:hardness results} summarizes our complexity results and algorithmic contributions for \cref{problem: space coverage problem}. We believe that these results cover most of the relevant cases in practice. When $\cQ$ is a vector space, \cref{problem: space coverage problem} is tractable and the size of a smallest sufficient data set (assuming that it exists) depends only on the task structure (i.e. $\cX$ and $\cC$) and not on the specific choice of $\cQ$ (see \cref{subsubsec: Q continuous}). Interestingly, \cref{tab:hardness results} shows that this extends to sets that locally behave like a vector space: the problem is tractable when $\cQ$ has continuous structure (e.g. vector spaces, relatively open sets, or convex sets), but NP-hard for general discrete query sets—even when restricted to basis vectors—except for very special structured families such as the set of extreme points of the feasible set.


\begin{table}[]
\centering
\begin{center}
\begin{tabular}{|>{\centering\arraybackslash}m{2.7cm}|
                >{\centering\arraybackslash}m{2.7cm}|
                >{\centering\arraybackslash}m{5.5cm}|
                >{\centering\arraybackslash}m{4.3cm}|}
\hline
$\cQ$ & $\cC$ & \textbf{Complexity} & \textbf{Minimal data set size} \\
\hline
\hline
Finite & Relatively open & NP-hard (\cref{hardness result}) & $\geq r(\cX,\cC)$ \\
\hline
Basis of $\mathbb{R}^d$ & Relatively open & NP-hard (Alg \ref{prop: hardness Q basis}) & $\geq r(\cX,\cC)$ \\
\hline
Basis of $\mathbb{R}^d$ & Open & Polynomial time (Alg \ref{alg:data collection algorithm}) & $\geq r(\cX,\cC)$ \\
\hline
Vector space & Relatively open & Polynomial time (Alg \ref{algorithm space coverage problem vector space}) & $ r(\cX,\cC)$ \\
\hline
Relatively open & Relatively open & Polynomial time (Alg \ref{alg:relatively open}) & $\in \{r(\cX,\cC),r(\cX,\cC)+1\}$ \\
\hline
Convex & Relatively open & Polynomial time$^\star$ (Alg \ref{alg:relatively open}) & $\in \{r(\cX,\cC),r(\cX,\cC)+1\}$ \\
\hline
$\cX^\angle$ & Relatively open & Polynomial time$^\star$ (Alg \ref{alg: sufficient datatset for extreme points}) & $\in \{r(\cX,\cC),r(\cX,\cC)+1\}$ \\
\hline
\end{tabular}
\end{center}

\caption{Hardness of \cref{problem: space coverage problem} for different values of $\cQ$ and $\cC$. Here, $\cC$ is assumed to be convex. When $\cQ$ is relatively open, \cref{alg:relatively open} outputs a minimal sufficient data set. $^\star$For both cases when $\cQ$ is convex or the set of extreme points of $\cX$, the algorithms provided are finding sufficient data sets of cardinality differing by at most one from a smallest sufficient data set. The quantity $r(\cX,\cC)$ is defined in \cref{def: Missing Information}.}
\label{tab:hardness results}
\end{table}


We now study each case of $\cQ$ structure and introduce tractable algorithms and hardness results.

\subsubsection{$\cQ$ discrete} \label{subsubsec: Q discrete} 

While the discrete nature makes the problem inherently hard, we study two important special cases for which we provide tractable algorithms, namely $\cQ$ basis and $\cQ=\cX^\angle$.

\textit{$\cQ$ basis of $\R^d$.} We start with the nominal case of $\cQ$ being the canonical basis vectors $\{e_i \; : \; i \in [d]\}$ where $(e_i)_j = \mathbf{1}(i=j)$ for all $i,j \in [d]$. That is the highly practical case where we can query coordinates of the unknown cost vector (eg, individual edges of a graph, see also \cref{sec:applications} for more applications). Despite \cref{problem: space coverage problem} being NP-hard, it will admit a simple explicit solution for this special structure of $\cQ$ when $\cC$ is open. 
In fact, given $v_1,\ldots,v_k$ a basis of $\spc{\dualpoly{\cX}{\cC}}$, the smallest sufficient data set verifying the spanning condition of \cref{cor:dualpoly-carac}, is exactly $\cD = \{ e_i \; : \; i \in [d], \; \exists j \in [k], \; v_j^\top e_i \neq 0 \}$. This is all the non-zero coordinates of basis vectors of $\dir{\dualpoly{\cX}{\cC}}$. This case can be generalized in a straightforward manner when $\cQ$ is any basis of $\Re^d$; see \cref{alg:data collection algorithm}.

Despite the fact that \cref{problem: space coverage problem} can be solved in polynomial time when $\cC$ is open, and $\cQ$ is a basis of $\R^d$, it becomes NP-hard when $\cC$ is relatively open rather than open.
\begin{proposition} \label{prop: hardness Q basis}
    When $\cQ$ is a basis of $\R^d$ and $\cC$ is relatively open, \cref{problem: space coverage problem} is NP-hard.
\end{proposition}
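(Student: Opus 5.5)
We reduce from the sparsest-solution ($\ell_0$-minimization) problem used in the proof of \cref{hardness result}: given a finite set of vectors $M=\{m_1,\dots,m_n\}\subset\R^p$ and a target $v\in\R^p$, find the smallest subset of $M$ whose span contains $v$. The idea is to keep $\cQ$ equal to the canonical basis of $\R^d$ and move the combinatorial difficulty into $\spc{\cC}$. By \cref{lemma:problem equivalence}, what matters is the projection onto $\spc{\cC}$: the condition is $\proj{\spc{\cC}}{\spc{\dualpoly{\cX}{\cC}}}\subset \proj{\spc{\cC}}{\Vect\cD}$. If $\spc{\cC}$ is a low-dimensional subspace, the projections of the canonical basis vectors $e_i$ onto it can be arbitrary vectors. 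Hence a relatively open $\cC$ lets us realize any finite family $M$.

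\textbf{Construction.} Let $d=n$ and let $M\in\R^{p\times n}$ have columns $m_i$. Assume without loss of generality that $M$ has full row rank $p$; otherwise restrict to the span of its columns, and if $v$ is not in that span the instance is trivially infeasible. Set $W=\mathrm{row}(M)=\mathrm{range}(M^\top)\subset\R^n$, a $p$-dimensional subspace, and take $\cC=W$ itself, which is a relatively open convex set with $\spc{\cC}=W$. Identify $W$ with $\R^p$ through the isomorphism $\phi:W\to\R^p$, $w\mapsto (M^\top)^{+}w$, so that $\phi(\proj{W}{e_i})$ is, up to a fixed invertible linear map $G=(MM^\top)^{-1}$, equal to $m_i$. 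Precisely, $\proj{W}{e_i}=M^\top G m_i$.

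Next choose $\cX=\conv\{0,u\}$ with $u=M^\top G v\in W$, written in standard form by the usual lifting. Then $\spc{\dualpoly{\cX}{\cC}}=\Vect\{u\}$, since $\cC=W$ contains costs $c$ with $c^\top u>0$ and costs with $c^\top u<0$, so both endpoints are reachable. Because $u\in W$, its projection onto $W$ is $u$ itself. A set $\cD=\{e_i:i\in S\}$ is then sufficient if and only if $u\in\Vect\{M^\top Gm_i:i\in S\}$. Since $M^\top G$ is injective, this holds if and only if $v\in\Vect\{m_i:i\in S\}$. Minimal sufficient data sets therefore correspond exactly to sparsest representations of $v$, and the construction is computable in polynomial time.

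\textbf{Main obstacle.} The delicate step is the standard-form encoding of $\cX=\conv\{0,u\}$ when $u$ has mixed-sign coordinates. The lifting (for instance $x=tu$, $t\in[0,1]$, with slack variables) adds coordinates, and therefore also adds canonical basis vectors to $\cQ$. To handle this, I would embed everything in a larger space and extend $\cC$ by zero on the auxiliary coordinates, so that the auxiliary $e_j$ project to $0$ on $\spc{\cC}$ and are useless. The realized segment direction must still project onto $u$. If standard form is not essential, this bookkeeping is routine. If it is, I would instead pick $\cX$ as a simplex face whose edge direction has the required projection.
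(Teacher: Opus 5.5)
Your reduction is correct, and its skeleton matches the paper's proof. In both, you make $\spc{\dualpoly{\cX}{\cC}}$ one-dimensional, use \cref{lemma:problem equivalence} to push everything onto $\spc{\cC}$, and exploit that orthogonal projections of a basis onto a proper subspace can encode an arbitrary sparse-representation instance.

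Where you differ is in how that encoding is realized. The paper proves a factorization lemma: any square $H$ equals $PM$, with $P$ the orthogonal projector onto the column span of $H$ and $M$ invertible. It then takes $\spc{\cC}=\Vect H$ and $\cQ$ the columns of $M$, which is a generic, non-canonical basis. You instead keep $\cQ$ canonical and take $\spc{\cC}=\mathrm{row}(M)$. The projected queries are then $M^\top(MM^\top)^{-1}m_i$, an injective linear image of the original columns. This buys three things:
\begin{enumerate}
\item it bypasses the factorization lemma;
\item it gives hardness directly for coordinate queries, the case used in the applications;
\item it handles non-square and rank-deficient instances explicitly, whereas the paper's lemma is stated only for square $H$.
\end{enumerate}
At the level of the spanning condition the two versions are interchangeable anyway. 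The condition $\spc{\dualpoly{\cX}{\cC}}\subset\spc{\cC}^\perp+\Vect\cD$ is invariant under $q\mapsto Tq$, $c\mapsto T^{-\top}c$ for invertible $T$, and such a map sends any basis to the canonical one.

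You are also more careful than the paper about exhibiting an actual $\cX$; the paper simply posits $\dim\spc{\dualpoly{\cX}{\cC}}=1$. One correction to your lifting: the naive $x=tu$ with $x\geq 0$ fails when $u$ has negative entries. Use a translated version instead, such as $x=a+tu$ and $t+s=1$, with $a\geq 0$, $a+u\geq 0$ and all variables nonnegative. Extend $\cC$ by zero on the coordinates $(t,s)$. Then the auxiliary $e_j$ project to $0$ on $\spc{\cC}$, the segment direction $(u,1,-1)$ projects to $(u,0,0)$, and your equivalence with sparsest representations of $v$ goes through unchanged.
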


Let us provide a proof sketch of this result. When $\cC$ is relatively open, \cref{lemma:problem equivalence} allows us to reduce the problem to a modified version of the one discussed after \cref{hardness result}. The resulting problem is to minimize $\norm{x}_0$ subject to $Mx = v$, except that the columns of $M$ are given by the projections of the elements of $\cQ$ onto $\spc{\cC}$. Because $\cQ$ is a basis of $\R^d$, the matrix $M$ has rank $\dim \spc{\cC}$. If $\cC$ is open, that is, if $\spc{\cC} = \R^d$, then $M$ is invertible, and the problem is tractable.
If $\cC$ is not open but only relatively open, then $\dim \spc{\cC} \le d - 1$. In this case, $M$ is not invertible, and the problem becomes NP-hard.


 \textit{$\cQ$ equal to the set of extreme points of $\cX$.} \cref{algorithm to compute delta} returns a basis of $\spc{\dualpoly{\cX}{\cC}}$ of the form $\cD_0 := \{x^\star_1 - x_0,\dots,x^\star_r - x_0\}$ where $x_0$ and $\{x_i^\star\}_{i \leq r}$ are 
 elements of $\dualpoly{\cX}{\cC}$. We can modify this procedure (see \cref{alg: modified algorithm to compute delta}, Appendix \ref{appendix: modified alg to compute delta}) to guarantee that $\{x_0, x^\star_1, \dots, x^\star_r\}$ are all extreme points and instead return $\cD=\{x_0, x^\star_1, \dots, x^\star_r\}\subset \cQ$. Notice that when $\cC$ is open, $\cD_0$ is a sufficient data set (since $\spc{\dualpoly{\cX}{\cC}}\subset \Vect \cD_0$) with minimal cardinality $r(\cX,\cC)$ (because $\cD_0$ is a basis of $\spc{\dualpoly{\cX}{\cC}}$, i.e. $\abs{\cD_0}=\dim \spc{\dualpoly{\cX}{\cC}}=r(\cX,\cC)$), but does not satisfy $\cD_0 \subset \cQ$. Since $\Vect \cD_0 \subset \Vect \cD $, $\cD$ is also a sufficient data set of cardinality $r(\cX,\cC)+1$ and satisfies $\cD\subset \cQ$. In the more general case when $\cC$ is relatively open instead of open, \cref{alg: sufficient datatset for extreme points} generalizes this procedure by applying gaussian elimination to the projection of $\cD_0$ on $\spc{\cC}$ (see Appendix \ref{appendix: justification of algorithm for Q extreme points} for correctness proof).

 \begin{algorithm}[h]
    \caption{Computing a nearly smallest sufficient dataset when $\cQ=\cX^\angle$ and $\cC$ is relatively open.}
     
    \KwIn{$\mathcal U:=\{x_0,x^\star_1,\dots,x^\star_r\}$, where $\mathcal U$ is the output of  the modified version of Alg \ref{algorithm to compute delta} (Alg \ref{alg: modified algorithm to compute delta}).}
    \KwOut{A \sufficient\ $\cD\subset \cX^\angle$ such that $\abs{\cD}=r(\cX,\cC)+1$.}
    Run gaussian elimination on $\{\proj{\spc{\cC}}{x_0-x^\star_1},\dots,\proj{\spc{\cC}}{x_0-x^\star_r}\}$ to obtain a linearly independent set of vectors $\{\proj{\spc{\cC}}{x_0-x^\star_{i_1}},\dots,\proj{\spc{\cC}}{x_0-x^\star_{i_p}}\}$.

    \textbf{return} $\{x_0,x^\star_{i_1},\dots,x^\star_{i_{p}}\}$
\label{alg: sufficient datatset for extreme points}
\end{algorithm}

\subsubsection{$\cQ$ continuous} \label{subsubsec: Q continuous} When $\cQ$ exhibits a continuous structure, the problem becomes tractable in most cases as we will see below.

\textit{$\cQ$ vector space. } From \cref{lemma:problem equivalence}, \cref{problem: space coverage problem} is equivalent to finding a smallest set $\cD$ such that $\proj{\spc{\cC}}{\spc{\dualpoly{\cX}{\cC}}} \subset \proj{\spc{\cC}}{\Vect{\cD}}$ under the assumption that $\proj{\spc{\cC}}{\spc{\dualpoly{\cX}{\cC}}} \subset \proj{\spc{\cC}}{\Vect \cQ}$. When $\cQ$ is a vector space, this assumption can be rewritten as $\proj{\spc{\cC}}{\spc{\dualpoly{\cX}{\cC}}} \subset \proj{\spc{\cC}}{\cQ}$. Hence, in order to find a minimal sufficient data set, it suffices to find a basis of $\proj{\spc{\cC}}{\spc{\dualpoly{\cX}{\cC}}}$ of the form $\proj{\spc{\cC}}{q_1},\dots,\proj{\spc{\cC}}{q_r}$ such that $q_1,\dots,q_r\in \cQ$ (exists because of the inclusion we have just mentioned). Consequently, we can immediately see that $\cD=\{q_1,\dots,q_r\}\subset \cQ$ is a sufficient data set, and it is of cardinality $r(\cX,\cC)$. The following algorithm formalises this idea and provides a polynomial time procedure to solve Problem \ref{problem: space coverage problem} when $\cQ$ is a vector space.

\begin{algorithm}[h]
    \caption{Computing a smallest \sufficient\ when $\cQ$ is a vector space}
     \label{algorithm space coverage problem vector space}
    \KwIn{Matrices $M_{\spc{\dualpoly{\cX}{\cC}}}\in \R^{d\times \dim\spc{\dualpoly{\cX}{\cC}}},\;M_{\spc{\cC}}\in \R^{d\times \dim \spc{\cC}},M_{\mathcal Q}\in \R^{d\times \dim \cQ}$ whose columns represent respectively basis for the spaces $\spc{\dualpoly{\cX}{\cC}},\spc{\cC},\mathcal Q$.}
    \KwOut{A minimal sufficient data set $\cD\subset \cQ$.}
    \begin{itemize}
        \item Compute projection matrix $P_{\spc{\cC}}:=M_{\spc{\cC}}\left(M_{\spc{\cC}}^\top M_{\spc{\cC}}\right)^{-1}M_{\spc{\cC}}^\top\in \R^{d\times d}$.
        \item Obtain a basis of $\proj{\spc{\cC}}{\spc{\dualpoly{\cX}{\cC}}}$ by doing gaussian elimination on the columns of $P_{\spc{\cC}} M_{\spc{\dualpoly{\cX}{\cC}}}$. Consider $\Tilde{M}_{\spc{\dualpoly{\cX}{\cC}}} \in \R^{d\times r(\cX,\cC)}$ the matrix whose columns are the obtained basis.
        \item Solve the linear system $\Tilde{M}_{\spc{\dualpoly{\cX}{\cC}}}=P_{\spc{\cC}}M_{\cQ}S$ for $S\in \R^{\dim \cQ \times r(\cX,\cC)}$.
    \end{itemize}

    \textbf{return} set of columns of $M_{\cQ}S$.
\end{algorithm}

It is easy to see that Algorithm \ref{algorithm space coverage problem vector space} involves only polynomial-time operations. The following proposition shows that it indeed solves \cref{problem: space coverage problem}. Furthermore, it proves that the lower bound $r(\cX,\cC)$ (\cref{def: Missing Information}) is tight: it is exactly the smallest cardinality of a sufficient decision data set in this case. We can see here that as long as $\cQ$ is a vector space and a sufficient data set exists, the size of a smallest sufficient data set remains the same, because by assumption $\proj{\spc{\cC}}{\cQ}$ will always contain a basis of $\proj{\spc{\cC}}{\spc{\dualpoly{\cX}{\cC}}}$. 

\begin{proposition} \label{prop: covspace vector space corectness}
 When $\cC$ is a relatively open set, $\cQ$ is a vector space and $\spc{\dualpoly{\cX}{\cC}}\subset \spc{\cC}^\perp + \cQ$, Algorithm \ref{algorithm space coverage problem vector space} outputs a set $\cD\subset \cQ$ with minimal cardinality such that $\spc{\dualpoly{\cX}{\cC}}\subset \spc{\cC}^\perp +\Vect \cD$. Furthermore, its cardinality is $r(\cX,\cC)$.    
\end{proposition}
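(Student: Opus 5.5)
The proof reduces everything to the projected inclusion of \cref{lemma:problem equivalence}. Write $V:=\spc{\dualpoly{\cX}{\cC}}$ and $P:=P_{\spc{\cC}}$, the orthogonal projector onto $\spc{\cC}$. By \cref{lemma:problem equivalence}, a set $\cD$ satisfies $V\subset \spc{\cC}^\perp+\Vect\cD$ if and only if $P(V)\subset P(\Vect\cD)$. Similarly, the hypothesis $V\subset\spc{\cC}^\perp+\cQ$ is equivalent to $P(V)\subset P(\cQ)$; here $\Vect\cQ=\cQ$ because $\cQ$ is a vector space. The argument then has three parts: well-definedness of the algorithm, membership and sufficiency of the output, and minimality.

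\textbf{Well-definedness.} Gaussian elimination on the columns of $P M_{V}$ extracts a maximal linearly independent subset of columns. This yields a basis of $P(V)$, so $\Tilde M_{V}$ has exactly $\dim P(V)=r(\cX,\cC)$ columns, by \cref{def: Missing Information}. Each column $\tilde v_j$ lies in $P(V)\subset P(\cQ)$. Since $P(\cQ)$ is the column space of $PM_{\cQ}$, there exists $s_j\in\R^{\dim\cQ}$ with $PM_{\cQ}s_j=\tilde v_j$. Hence the linear system $\Tilde M_{V}=PM_{\cQ}S$ is solvable, and any solution $S$ (for instance one computed by least squares or elimination) can be used.

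\textbf{Output is in $\cQ$ and sufficient.} The output is $\cD=\{M_{\cQ}s_1,\dots,M_{\cQ}s_r\}$. Each element is a linear combination of basis vectors of $\cQ$, so $\cD\subset\cQ$. Moreover $P(\Vect\cD)=\Vect\{PM_{\cQ}s_j\}=\Vect\{\tilde v_j\}=P(V)$. By \cref{lemma:problem equivalence} this gives $V\subset\spc{\cC}^\perp+\Vect\cD$. By \cref{cor:dualpoly-carac}, which applies because $\cC$ is relatively open (and convex, the standing assumption), $\cD$ is a sufficient data set.

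\textbf{Cardinality and minimality.} The vectors $\tilde v_j$ are linearly independent, so the $M_{\cQ}s_j$ are pairwise distinct: two equal elements would have equal projections. Hence $\abs{\cD}=r(\cX,\cC)$. Minimality follows from \cref{sufficient data set lowerbound}. Directly, any $\cD'$ with $P(V)\subset P(\Vect\cD')$ satisfies $\abs{\cD'}\ge\dim P(\Vect\cD')\ge r(\cX,\cC)$.

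The only step needing care is the solvability of the linear system, that is, the passage from the hypothesis to $P(V)\subset P(\cQ)$. This is exactly \cref{lemma:problem equivalence} combined with the fact that $\Vect\cQ=\cQ$; everything else is routine linear algebra.
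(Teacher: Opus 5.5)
Your proof is correct and follows essentially the same route as the paper. Both use \cref{lemma:problem equivalence} to reduce to $\proj{\spc{\cC}}{\spc{\dualpoly{\cX}{\cC}}}\subset \proj{\spc{\cC}}{\cQ}$, take a basis of the projected space realized as projections of elements of $\cQ$, and get minimality from \cref{sufficient data set lowerbound}. You also make explicit the solvability of the linear system and the distinctness of the output vectors, which the paper's short argument leaves implicit.
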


As we will see below, \cref{algorithm space coverage problem vector space} serves as a fundamental building block for designing algorithms for broader classes of sets $\cQ$.

\textit{$\cQ$ relatively open.} The following lemma will be key to designing an algorithm in this case.

\begin{lemma} \label{equivalent data set open}
    Assume that $\cQ$ is a relatively open set. If $\cD$ is a finite subset of $\Vect \cQ$ and contains one element of $\cQ$, then there exists a finite subset $\cD'$ of $\cQ$ such that $\abs{\cD}=\abs{\cD'}$ and $\Vect \cD = \Vect \cD'$.
\end{lemma}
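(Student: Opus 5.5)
The plan is to fix the element of $\cD$ that lies in $\cQ$, keep it, and replace every other element of $\cD$ by a small perturbation of it that stays in $\cQ$ without changing the span. Concretely, let $q_0\in\cD\cap\cQ$ and write $\cD=\{q_0,d_1,\dots,d_k\}$. Since $\cQ$ is relatively open, there is $\varepsilon>0$ with $B(q_0,\varepsilon)\cap\aff{\cQ}\subset\cQ$, where $\aff{\cQ}=q_0+\dir{\cQ}$.

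\textbf{Step 1: decompose $\Vect\cQ$.} Every $q\in\cQ$ can be written $q=q_0+(q-q_0)$ with $q-q_0\in\dir{\cQ}$. Hence $\Vect\cQ=\Vect\{q_0\}+\dir{\cQ}$. So each $d_i\in\Vect\cQ$ can be written $d_i=a_iq_0+w_i$ with $a_i\in\R$ and $w_i\in\dir{\cQ}$; if the decomposition is not unique, we fix one choice.

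\textbf{Step 2: perturb.} For $t_i>0$ small enough that $\norm{t_iw_i}<\varepsilon$, set $d_i':=q_0+t_iw_i$. Then $d_i'\in\aff{\cQ}\cap B(q_0,\varepsilon)\subset\cQ$. We have $d_i'\in\Vect\{q_0,d_i\}$, and conversely $d_i=a_iq_0+(d_i'-q_0)/t_i\in\Vect\{q_0,d_i'\}$. Because $q_0$ is kept, $\cD':=\{q_0,d_1',\dots,d_k'\}\subset\cQ$ satisfies $\Vect\cD'=\Vect\cD$.

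\textbf{Step 3: cardinality.} This is where care is needed: we must ensure the $d_i'$ are pairwise distinct and different from $q_0$. The latter requires $w_i\neq 0$, i.e.\ $d_i\notin\Vect\{q_0\}$. In the way the lemma is used, $\cD$ is linearly independent (e.g.\ a basis produced by \cref{algorithm space coverage problem vector space}), so this holds automatically. Moreover, $\cD'$ then spans a space of dimension $|\cD|$ with at most $|\cD|$ elements, so its elements are linearly independent and in particular distinct. For a general, possibly dependent $\cD$, two issues arise. First, collisions $d_i'=d_j'$ happen only if $t_iw_i=t_jw_j$, and these are avoided by choosing the $t_i$ to avoid finitely many bad values. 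Second, the case $w_i=0$ cannot occur without changing the span. For instance, if $\cQ=\{q_0\}$ is a single point, a dependent $\cD$ with more than one element cannot be matched at all. So the main obstacle is pinning down this degenerate case, and I would state the lemma for linearly independent $\cD$, which is all the subsequent algorithm needs.
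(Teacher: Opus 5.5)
Your construction is the same as the paper's: it writes each element of $\cD$ other than $q_0$ as $\lambda_i q_0+v_i$ with $v_i\in\dir{\cQ}$ and replaces it by $q_0+v_i/M_i$ for large $M_i$.

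Your Step 3 objection is correct and points to a real gap in the paper. The paper justifies $\abs{\cD'}=\abs{\cD}$ only by saying that $\cD'$ is obtained from $\cD$ by invertible linear operations. That preserves the span but not distinctness: an element with $v_i=0$ collapses onto $q_0$. Your singleton example is a valid counterexample, since a singleton is relatively open under the paper's definition. So is $\cQ=\{(x,1):\abs{x}<1\}\subset\R^2$ with $\cD=\{(0,1),(0,2)\}$. The lemma therefore needs an extra hypothesis such as linear independence of $\cD$, and under that hypothesis your dimension count gives distinctness.

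One caveat on your last remark. The paper applies the lemma to $\cD\cup\{q\}$ with $q\in\cQ$ arbitrary: once for the $r(\cX,\cC)+1$ upper bound, and once in the else-branch of the algorithm for relatively open $\cQ$. That set need not be independent. Replacing it by a basis of $\Vect(\cD\cup\{q\})$ containing $q$ only lowers the cardinality, so both conclusions still follow from your restricted version.
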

A direct consequence of this lemma is that for any relatively open query set $\cQ$ we can always construct a \sufficient\ $\cD\subset \cQ$ of cardinality $r(\cX,\cC)+1$. This is because we can apply \cref{algorithm space coverage problem vector space} to obtain a sufficient data set $\cD\subset \Vect \cQ$ such that $\abs{\cD}=r(\cX,\cC)$, and then use the idea in \cref{equivalent data set open} to obtain from $\cD \cup \{q_0\}$ (where $q_0\in \cQ$) a sufficient data set $\cD'\subset \cQ$ such that $\abs{\cD'}=\abs{\cD \cup \{q_0\}}=r(\cX,\cC)+1$. Hence, the following corollary follows from \cref{sufficient data set lowerbound}.
\begin{corollary} \label{cardinality suff data set relatively open set}
    When $\cQ$ is a relatively open set, the smallest cardinality of a sufficient data set $\cD\subset \cQ$ is either $r(\cX,\cC)$ or $r(\cX,\cC)+1$.
\end{corollary}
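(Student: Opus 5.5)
The corollary follows by sandwiching the minimal cardinality between two bounds, and I will prove each bound separately. Assume throughout that a sufficient data set inside $\cQ$ exists, i.e.\ $\spc{\dualpoly{\cX}{\cC}}\subset \spc{\cC}^\perp+\Vect\cQ$; otherwise the statement is vacuous.

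\emph{Lower bound.} This is immediate from \cref{sufficient data set lowerbound}. Any sufficient data set, in particular any $\cD\subset\cQ$, has cardinality at least $r(\cX,\cC)$.

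\emph{Upper bound.} I construct an explicit sufficient $\cD'\subset\cQ$ with $|\cD'|= r(\cX,\cC)+1$. Since $\Vect\cQ$ is a vector space and $\spc{\dualpoly{\cX}{\cC}}\subset\spc{\cC}^\perp+\Vect\cQ$, \cref{prop: covspace vector space corectness} applies with query space $\Vect\cQ$. Running \cref{algorithm space coverage problem vector space} on it yields $\cD_1\subset\Vect\cQ$ with $|\cD_1|=r(\cX,\cC)$ and $\spc{\dualpoly{\cX}{\cC}}\subset\spc{\cC}^\perp+\Vect\cD_1$.

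Next, fix any $q_0\in\cQ$ and set $\cD=\cD_1\cup\{q_0\}$. This is a finite subset of $\Vect\cQ$ containing an element of $\cQ$. Its cardinality is at most $r(\cX,\cC)+1$, with a strict inequality if $q_0$ happens to lie in $\cD_1$. \cref{equivalent data set open} then produces $\cD'\subset\cQ$ with $|\cD'|=|\cD|\le r(\cX,\cC)+1$ and $\Vect\cD'=\Vect\cD\supset\Vect\cD_1$.

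It remains to check that $\cD'$ is sufficient. Its span contains $\Vect\cD_1$, so $\spc{\dualpoly{\cX}{\cC}}\subset\spc{\cC}^\perp+\Vect\cD'$. By \cref{cor:dualpoly-carac}, which uses that $\cC$ is relatively open and convex, $\cD'$ is a sufficient data set. Combined with the lower bound, the minimal cardinality over $\cD\subset\cQ$ lies in $\{r(\cX,\cC),r(\cX,\cC)+1\}$.

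The only potentially delicate step is the use of \cref{equivalent data set open}, since it requires $\cD$ to contain an element of $\cQ$; adding $q_0$ is exactly what guarantees this. Everything else is a direct invocation of results stated earlier in the paper.
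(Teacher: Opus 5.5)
Your proof is correct and follows the paper's own argument. The lower bound comes from \cref{sufficient data set lowerbound}. The upper bound comes from running \cref{algorithm space coverage problem vector space} on $\Vect\cQ$, adjoining an arbitrary $q_0\in\cQ$, moving the set into $\cQ$ with \cref{equivalent data set open}, and concluding with \cref{cor:dualpoly-carac}.

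Your argument only uses $\abs{\cD'}\le r(\cX,\cC)+1$, and that is fortunate. The equal-cardinality conclusion of \cref{equivalent data set open} can fail in degenerate cases, for example when $\cD$ contains $\lambda q_0$ with $\lambda\neq 1$ and $0\notin\aff{\cQ}$. In that case you simply discard the redundant element, which leaves the span unchanged.
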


The approach described above already provides a set whose cardinality differs by at most $1$ from the smallest cardinality possible. We provide in the following algorithm a way to find the smallest cardinality possible of a sufficient decision data set that is a subset of any relatively open set $\cQ$.

\begin{algorithm}[h] 
    \caption{Computing a smallest \sufficient\ when $\cQ$ is a relatively open set}
    \label{alg:relatively open}
    \KwIn{Matrices $M_{\spc{\dualpoly{\cX}{\cC}}}\in \R^{d\times \dim \spc{\dualpoly{\cX}{\cC}}},\;M_{\spc{\cC}}\in \R^{d\times \dim \spc{\cC}},M_{\mathcal Q}\in \R^{d\times \dim \cQ}$ whose columns represent respectively basis for the spaces $\spc{\dualpoly{\cX}{\cC}},\spc{\cC},\Vect{\mathcal Q}$.}
    \KwOut{A minimal sufficient data set $\cD\subset \cQ$.}
    
    \uIf{$\cQ \cap (\spc{\dualpoly{\cX}{\cC}}+\spc{\cC}^\perp)\setminus \spc{\cC}^\perp\neq \varnothing$}{Consider $q\in \cQ \cap (\spc{\dualpoly{\cX}{\cC}}+\spc{\cC}^\perp) \setminus \spc{\cC}^\perp$
    
    Apply Algorithm \ref{algorithm space coverage problem vector space} to find a minimal $\cD\subset \Vect \cQ$ such that $\spc{\dualpoly{\cX}{\cC}}\subset \spc{\cC}^\perp+\Vect \{q\}+\Vect \cD$.

    Apply the method in Lemma \ref{equivalent data set open} to obtain a set $\cD'$ such that $\Vect \cD'=\Vect (\cD\cup \{q\})$, $\abs{\cD'}=\abs{\cD \cup \{q\}}$ and $\cD'\subset \cQ$.
    
    \textbf{return} $\cD'$} \Else{
    Apply Algorithm \ref{algorithm space coverage problem vector space} to find a minimal $\cD\subset \Vect \cQ$ such that $\spc{\dualpoly{\cX}{\cC}}\subset \spc{\cC}^\perp+\Vect \cD$.

    $\cD \leftarrow \cD \cup \{q\}$ for an arbitrary element $q\in \cQ$
    
    Use the method in Lemma \ref{equivalent data set open} to find a set $\cD'$ such that $\abs{\cD'}=\abs{\cD}$, $\Vect \cD= \Vect \cD'$, $\cD'\subset \cQ$.

    \textbf{return} $\cD'$
    
    }
\end{algorithm}

\begin{proposition} \label{covspace vector space corectness relatively open set}
 When $\cC$ is a relatively open set, $\cQ$ is a relatively open set and $\spc{\dualpoly{\cX}{\cC}}\subset \spc{\cC}^\perp + \Vect \cQ$, Algorithm \ref{alg:relatively open} outputs a \sufficient\ $\cD\subset \cQ$ with minimal cardinality.
\end{proposition}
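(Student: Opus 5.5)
We prove \cref{covspace vector space corectness relatively open set} in three steps: the output of \cref{alg:relatively open} is well defined and lies in $\cQ$; it is sufficient; and its cardinality matches a lower bound that every sufficient $\cD\subset\cQ$ must obey. Throughout write $V:=\spc{\dualpoly{\cX}{\cC}}$, $W:=\spc{\cC}^\perp$, $r:=r(\cX,\cC)$, and $P$ for the projection onto $\spc{\cC}$. By \cref{lemma:problem equivalence}, a set $\cD$ is sufficient iff $P V\subset P\,\Vect\cD$, which is how we check sufficiency. By \cref{sufficient data set lowerbound} and \cref{cardinality suff data set relatively open set}, the optimum lies in $\{r,r+1\}$. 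It therefore suffices to show two things: in the first branch the output has size $r$, and in the second branch no sufficient $\cD\subset\cQ$ of size $r$ exists.

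\emph{First branch.} Pick $q\in\cQ\cap(V+W)\setminus W$. Then $Pq\neq 0$ and $Pq\in PV$. Apply \cref{algorithm space coverage problem vector space} with query space $\Vect\cQ$ and with the known space enlarged to $W+\Vect\{q\}$; equivalently, work in the quotient $PV/\Vect\{Pq\}$. It returns $\cD\subset\Vect\cQ$ with $|\cD|=\dim PV-1=r-1$ by \cref{prop: covspace vector space corectness}. One must check that the feasibility hypothesis of that proposition holds here; it does, since $V\subset W+\Vect\cQ$. The set $\cD\cup\{q\}$ is then a finite subset of $\Vect\cQ$ containing an element of $\cQ$, so \cref{equivalent data set open} gives $\cD'\subset\cQ$ with the same span and $|\cD'|=r$. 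Sufficiency holds because the span is unchanged. This size attains the lower bound, so $\cD'$ is optimal.

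\emph{Second branch.} Here $\cQ\cap(V+W)\subset W$. First, $r+1$ is achieved: \cref{algorithm space coverage problem vector space} gives $\cD\subset\Vect\cQ$ of size $r$, we add an arbitrary $q\in\cQ$, and \cref{equivalent data set open} converts the result into $\cD'\subset\cQ$ of size $r+1$ with the same span, which is still sufficient. For optimality, suppose $\cD\subset\cQ$ is sufficient with $|\cD|=r$. Then $P\,\Vect\cD\supset PV$ with $\dim PV=r\geq\dim P\,\Vect\cD$, so $P\,\Vect\cD=PV$ and the vectors $Pq_i$ are linearly independent. In particular each $Pq_i\in PV$, so $q_i\in V+\ker P=V+W$. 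By the branch condition $q_i\in W$, hence $Pq_i=0$, contradicting independence. (If $r=0$, then $V\subset W$ and $\cD=\varnothing$ is sufficient, while the first branch cannot occur. The algorithm as written would output $\{q\}$; we treat $r=0$ separately by returning the empty set.)

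\emph{Main obstacle.} The delicate step is the first branch: we must formalize applying \cref{algorithm space coverage problem vector space} with the enlarged known space $W+\Vect\{q\}$, which is not of the form $\spc{\cC'}^\perp$ for the original $\cC$, and confirm that it yields exactly $r-1$ vectors. I would handle this by redoing the argument of \cref{prop: covspace vector space corectness} directly. Extend $Pq$ to a basis $Pq, Pq_2,\dots,Pq_r$ of $PV$ with $q_i\in\Vect\cQ$; this is possible because $PV\subset P\,\Vect\cQ$. Then set $\cD=\{q_2,\dots,q_r\}$.
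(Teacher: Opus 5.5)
Your proof is correct, and its overall structure matches the paper's. Both reduce the question to deciding between $r$ and $r+1$ via \cref{cardinality suff data set relatively open set}. In the first branch, both enlarge the known space to $W+\Vect\{q\}$ so that \cref{algorithm space coverage problem vector space} returns $r-1$ vectors. The paper verifies this count by showing $\dim(V\cap(W+\Vect\{q\}))=1+\dim(V\cap W)$. Your extension of $Pq$ to a basis of $PV$ is the same computation. It also avoids treating $W+\Vect\{q\}$ as if it had the form $\spc{\cC'}^\perp$, which the paper glosses over.

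The genuine difference is the lower bound in the second branch. The paper fixes one $q\in\cD$ and shows $V\cap(W+\Vect\{q\})=V\cap W$. It then applies \cref{dim intersection inequality} to get $\dim(V\cap(W+\Vect\cD))\le \dim(V\cap W)+|\cD|-1$. You instead observe that a size-$r$ sufficient set must have $\{Pq_i\}$ forming a basis of $PV$. This forces each $q_i$ into $V+W$, hence into $W$ by the branch condition, hence $Pq_i=0$. Your argument is shorter and needs no auxiliary dimension lemma, but it uses $|\cD|=r$ essentially. The paper's inequality is more quantitative: in this branch, any nonempty $\cD\subset\cQ$ covers at most $|\cD|-1$ of the $r$ missing dimensions.

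Your $r=0$ caveat is also correct. In that case only the else-branch can fire, and the algorithm as written outputs $\{q\}$ even though $\varnothing$ is sufficient. So the proposition genuinely needs $r\ge 1$, or your modification. The paper's proof assumes this silently when it picks $q\in\cD$ with $|\cD|=r$.
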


\textit{$\cQ$ convex.} \cref{alg:relatively open} can also be used for general convex sets. Indeed, since we have for any convex set $\cQ$, $\Vect\cQ = \Vect \relint \cQ $ (see Appendix \ref{appendix: proof of span Q equals span relint Q}), we can apply \cref{alg:relatively open} to $\relint \cQ$, which will provide a sufficient data set containing at most $r(\cX,\cC)+1$ elements. Since a sufficient data set cannot have less than $r(\cX,\cC)$ elements (see \cref{sufficient data set lowerbound}), then the sufficient data set obtained by \cref{alg:relatively open} will have at most one more element compared to the cardinality of a smallest \sufficient\ that is a subset of $\cQ$.

\section{Applications}\label{sec:applications}

\subsection{Minimal Cost Subway Line Design}\label{sec: subway example}

We now apply our framework and algorithm to the subway design problem introduced in \cref{sec:intro}. The objective is to construct a minimum-cost subway line connecting a specified source and destination. Construction costs on the network’s edges are uncertain, and the planner must decide on which edges to dispatch engineers to conduct field studies and obtain precise cost estimates.

Formally, this problem is a shortest path problem with uncertain edge costs. 
The city is represented by a graph \(G := (V, E)\), where \(V\) denotes the set of vertices and 
\(E \subseteq V^2\) the set of edges. The origin and destination of the subway line are given by 
\(s, t \in V\). Each edge \((i, j) \in E\) has an unknown construction cost \(c_{ij}\). Prior 
information about these costs is encoded through an uncertainty set 
\(\mathcal{C} \subset \mathbb{R}^{|E|}\), specifying that the true cost vector satisfies 
\(c \in \mathcal{C}\).

The decision maker’s goal is to determine the smallest set of edge costs that must be observed—%
i.e., the minimal set of coordinates \(c_{ij}\) to query—in order to identify the optimal 
\(s\)-\(t\) path. This is exactly the problem of finding a minimal sufficient data set when 
queries are restricted to canonical basis vectors,
\(
    q \in \mathcal{Q} := \{ e_{ij} : (i,j) \in E \},
\)
each of which reveals the cost of a single edge.

\paragraph{Shortest Path Problem as an LP.} 
The shortest path problem can be cast as the following min-flow problem:
{\setlength{\abovedisplayskip}{3pt}
 \setlength{\belowdisplayskip}{3pt}
 \setlength{\abovedisplayshortskip}{3pt}
 \setlength{\belowdisplayshortskip}{3pt}
\begin{align*}
    \min_{x\in \{0,1\}^E} &\sum_{(i,j) \in E} c_{ij} x_{ij} \quad \quad \text{s.t.} \quad \quad \forall v \in V,\; \sum_{j : (v,j) \in E} x_{vj} - \sum_{i : (i,v) \in E} x_{iv} = \tau_v
\end{align*}}
Here, $\tau_v$ is equal to $1$ if $v=s$, $-1$ if $v=t$ and $0$ otherwise. $x_{i,j}$ is a binary decision variable determining whether the chosen path from the source to the destination goes through the edge $(i,j)$. Since the constraints are totally unimodular, the constraint $x\in \{0,1\}^E$ can be relaxed to $x\in [0,1]^E$ (see \cite{wolsey2020integer}). We have therefore an LP and our theory applies.

\paragraph{Uncertainty Set.} 
In practice, the uncertainty set can capture several aspects of the problem. For example, it can impose positivity of all edge costs, encode lower and upper bounds, or incorporate structural relationships such as linear correlations with observable features (as we will discuss in the application of \cref{sec: hiring experiments}).

In our experiments, we assume that the decision maker has a nominal estimate $c_0$ of the cost vector, derived from historical data and preliminary expert assessments. This estimate is uncertain by some margin $\varepsilon$.
This is captured by the following box uncertainty set
$$\cC_{\varepsilon}:=\{c\in \R^E,\; (1-\varepsilon)c_0 \leq c \leq (1+\varepsilon)c_0\},$$
where $\varepsilon\in (0,1)$ measures the level of uncertainty in the nominal estimate. The assumption is that the true cost vector satisfies $c \in \cC_{\varepsilon}$.
When $\varepsilon=0$, the nominal estimation $c_0$ is assumed to be perfectly accurate. As $\varepsilon$ increases, the estimates become more uncertain.

We run Algorithms \ref{algorithm to compute delta} and \ref{alg:data collection algorithm} on the graph of a real neighborhood in the United States (see \cref{fig: cambridge map and shortest path}), using progressively larger values of $\varepsilon$. This allows us to illustrate how the minimal sufficient data set—that is, the smallest set of edges whose costs must be observed—depends jointly on the structure of the task (the underlying graph) and on the uncertainty set.

\begin{figure}[h] 
    \centering
    \begin{minipage}{0.5\textwidth} 
        \centering
        \includegraphics[scale=0.35]{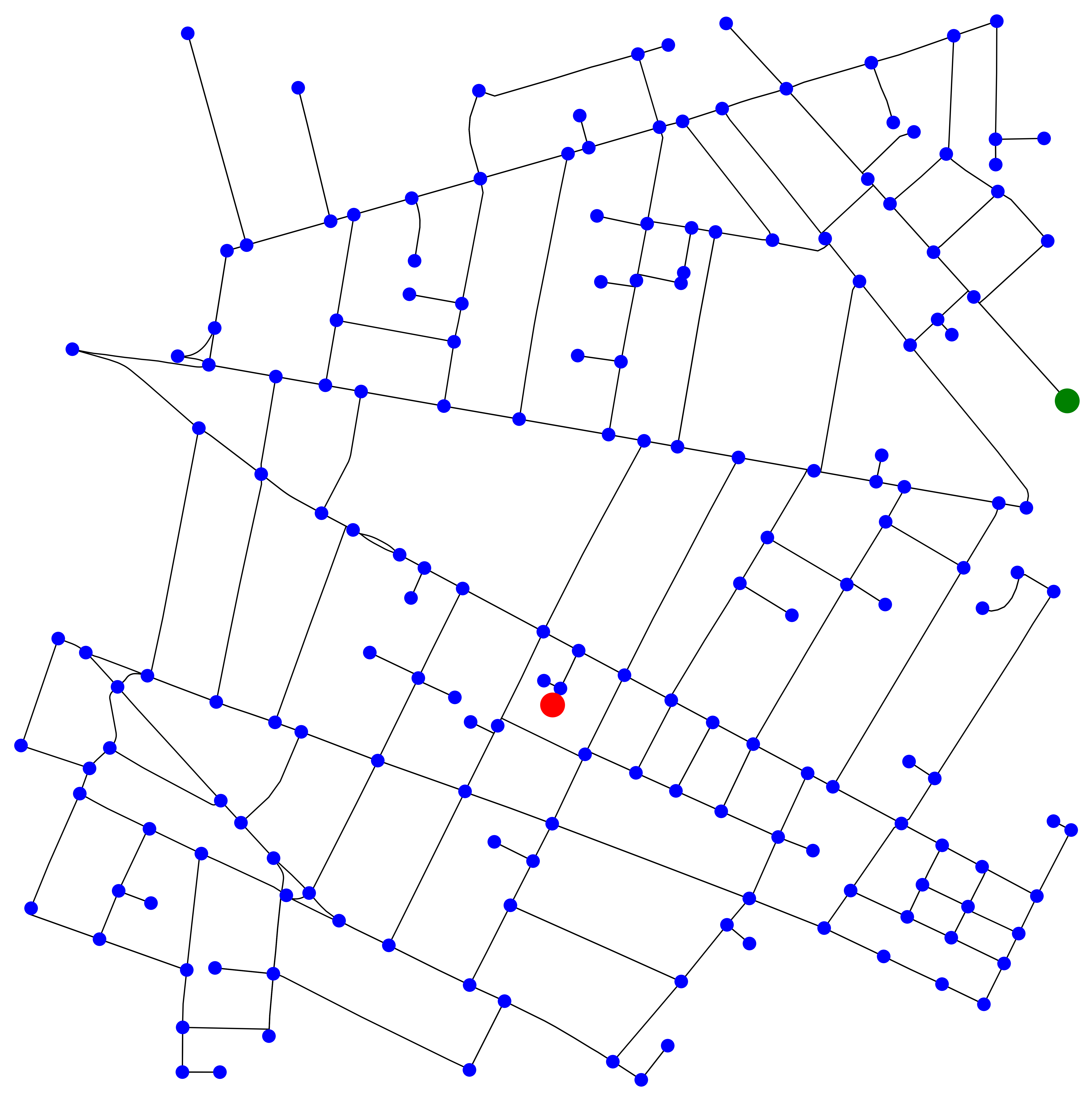}
        \end{minipage}\hfill
    \begin{minipage}{0.5\textwidth} 
        \centering
        \includegraphics[scale=0.35]{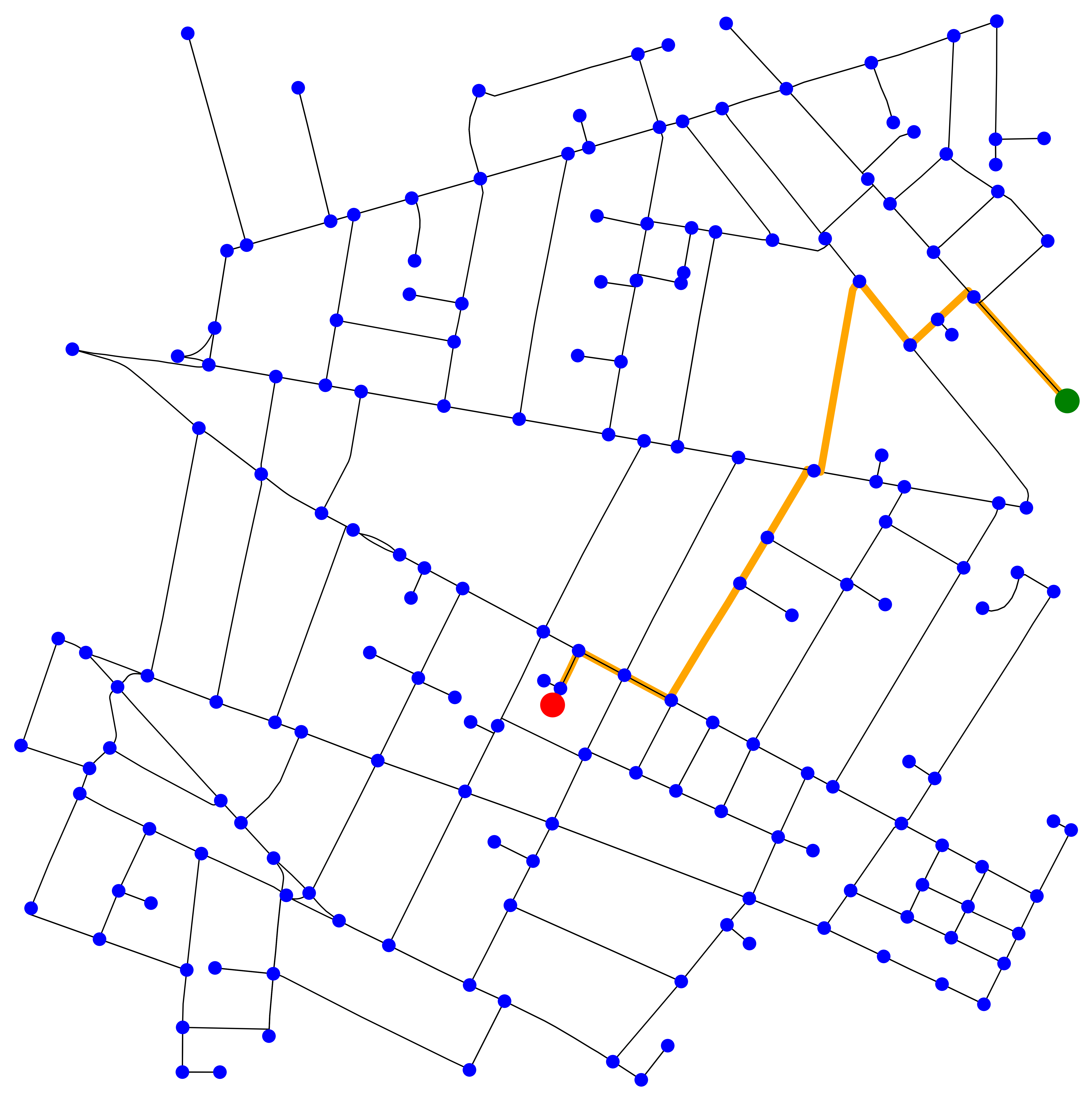}
        \end{minipage}
  \caption{US neighborhood graph map with origin in green and destination in red (left). $c_0$ is taken as the edge lengths, and the corresponding shortest path is in orange (right).}
    \label{fig: cambridge map and shortest path}
\end{figure}

\begin{figure}
    \centering
    \begin{minipage}{0.32\textwidth}
        \centering
        \includegraphics[width=\linewidth]{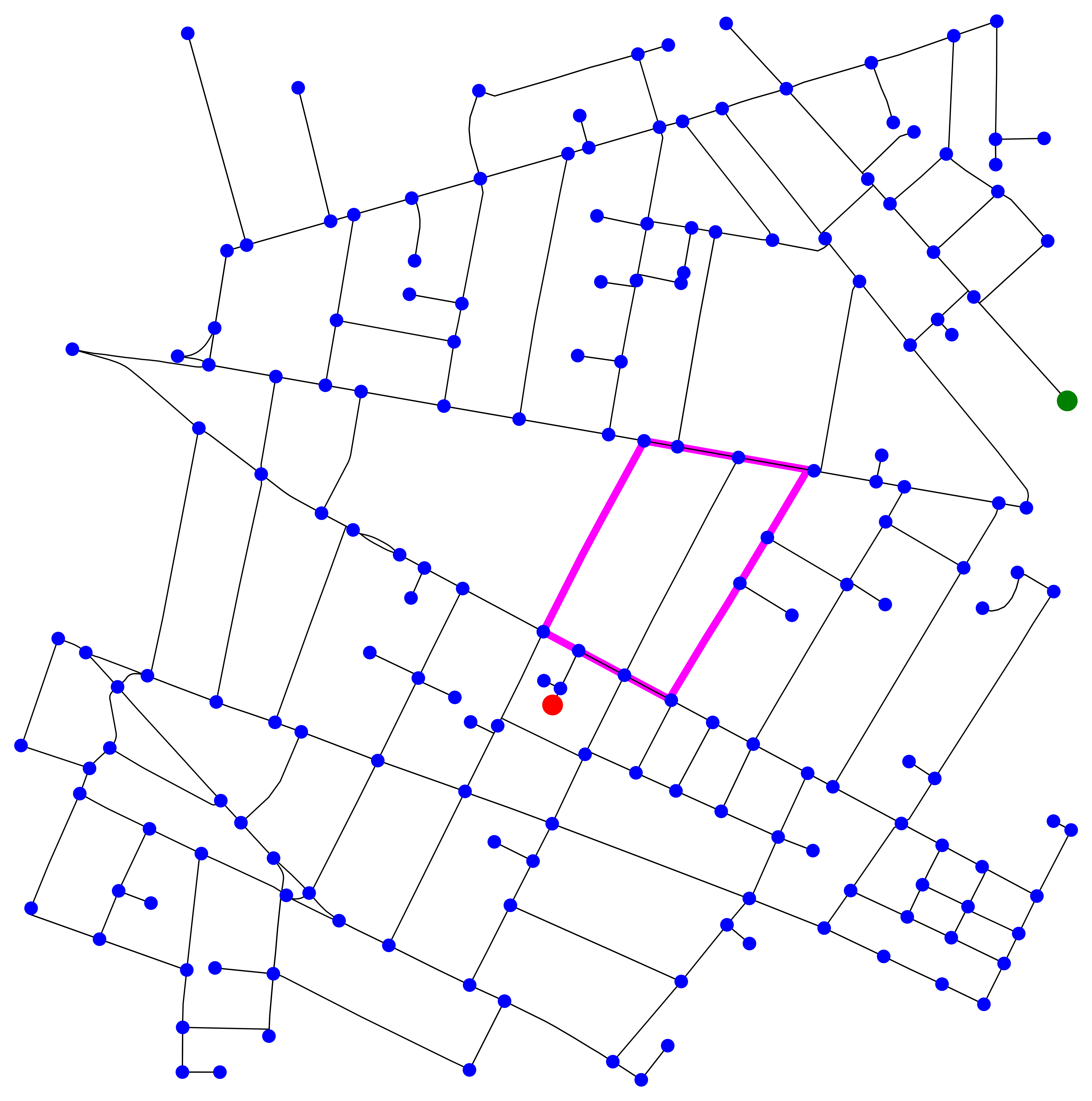}
    \end{minipage}\hfill
    \begin{minipage}{0.32\textwidth}
        \centering
        \includegraphics[width=\linewidth]{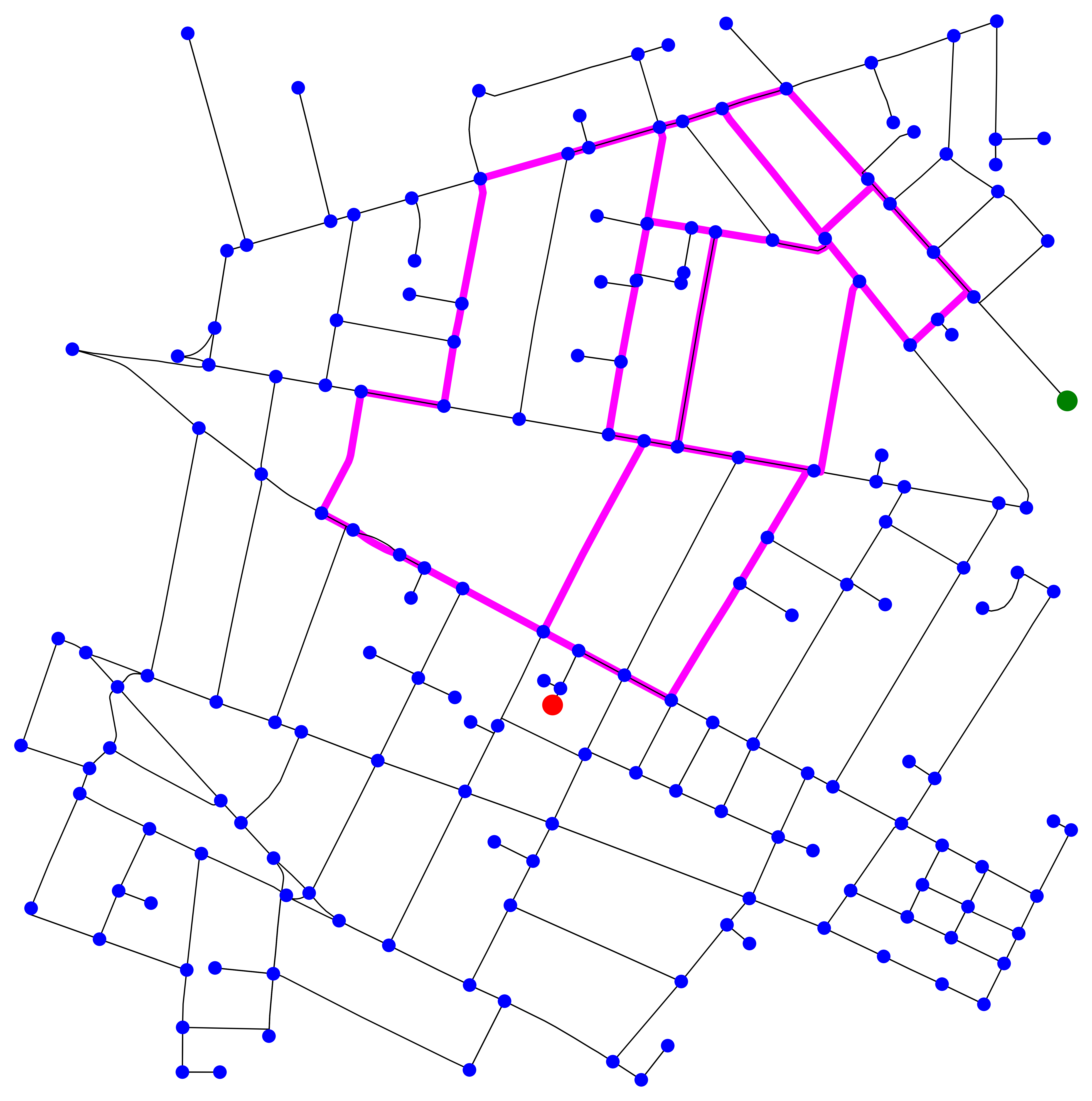}
    \end{minipage}\hfill
    \begin{minipage}{0.32\textwidth}
        \centering
        \includegraphics[width=\linewidth]{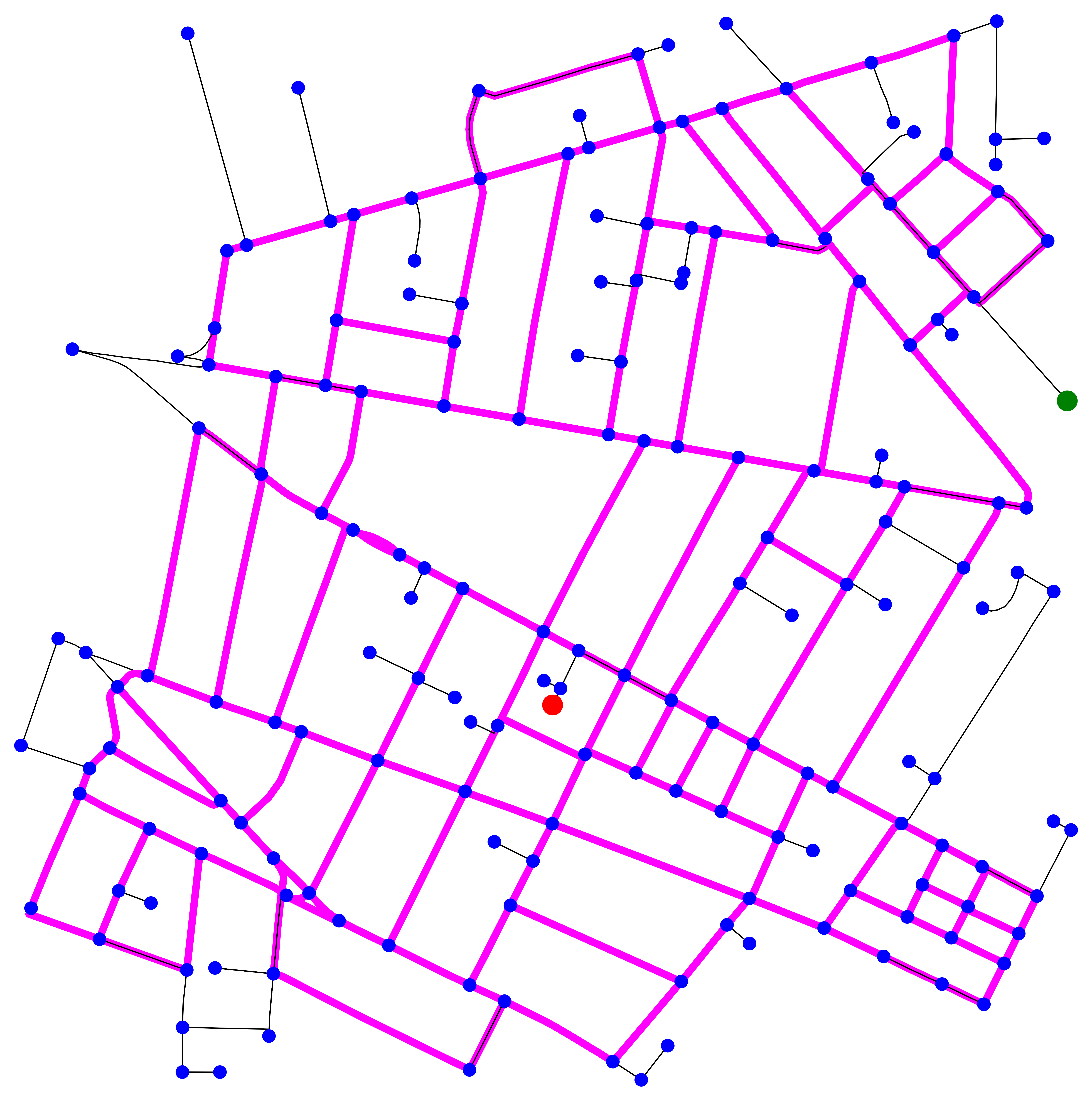}
    \end{minipage}

    \caption{Minimal edges to observe (in magenta) to find an optimal solution for values of $\epsilon=7\%, 30\%,\; 99\%$ (left to right).}
    \label{fig:progressive epsilon experiments}
\end{figure}

\paragraph{Insights}
\cref{fig:progressive epsilon experiments} shows the output of our algorithm for various values of \(\varepsilon\): the minimal set of edges whose costs must be queried to determine the optimal path. Clearly the task structure (the graph here) strongly influences which set of edges are sufficient. As intuition 
suggests, the selected edges tend to be those positioned strategically between the source and destination, where their costs can influence which path is ultimately optimal.

The geometry of the uncertainty set also plays a critical role. As \(\varepsilon\) increases---that is, as the box \(\mathcal{C}_{\varepsilon}\) becomes larger---the 
number of edges that must be observed naturally increases. In particular, when 
\(\varepsilon = 99\%\) (effectively equivalent to 
\(\mathcal{C}_{\varepsilon} = \mathbb{R}_{>0}^{E}\)), our algorithm recommends observing all edges except those that are provably irrelevant: edges connected 
only to isolated nodes, edges that are inaccessible, or edges that must be taken in every feasible \(s\)--\(t\) path. Naturally, such edges do not affect which path is optimal, and our algorithm correctly omits them. This highlights once more the nontrivial interplay between task structure and the informational value of data.

At $\varepsilon =7\%$, a careful reader may notice that one edge located inside the selected region does \emph{not} appear in the minimal sufficient set. This edge is a one-way, oriented 
northeast, and therefore is too costly to use in a path from \(s\) to \(t\). 
Given the uncertainty set, it is never optimal and thus irrelevant for the decision. This again illustrates the subtle ways in which task structure and uncertainty jointly determine which data points are informative.

The reader might also notice that the minimal sufficient data set here is a collection of cycles. For example, when $\varepsilon =7\%$, it is the difference between the optimal path for $c_0$ and an alternative route that can potentially be faster under plausible perturbations. This brings practical sense to our theory. Indeed, by \cref{thm: relatively open characterization}, the selected 
queries must span \(\Delta(\mathcal{X}, \mathcal{C})\), the set of relevant extreme directions. In flow problems, extreme directions correspond to cycles, which explains why a cycle structure emerges naturally in this flow setting. 
Cycles are precisely the most informative objects to measure, as they allow the decision maker to compare competing path alternatives.

\subsection{Hiring Interviews}  \label{sec: hiring experiments}

Our second application is a hiring problem. Here, a decision-maker is given a large pool of candidates along with their resumes and must decide which subset of candidates to interview in order to make the optimal hiring decision.
This problem has been studied in various settings \citep{purohitHiringUncertainty2019, epsteinSelectionOrdering2024}, including within the popular Secretary Problem \citep{kleinbergMultiplechoicesecretary2005,arlottoUniformlyBounded2019, brayDoesMultisecretary2019}. Prior work typically assumes a sequential, adaptive model, where interviews and hiring decisions occur in an online fashion. However, in many real-world scenarios---such as hiring PhD students or faculty---the set of candidates to interview must be chosen in advance, with hiring decisions made afterward based on all interview outcomes. This latter \textit{non-adaptive} setting is a natural instance of the data informativeness problem.
The smallest sufficient data set here is the smallest subset of candidates to interview to recover the optimal hiring decision.

\textit{Non-adaptive Secretary Problem as an LP.}
Formally, hiring from $d$ candidates is a decision-making problem where a decision consists of a binary vector $x \in  \cX \subset \{0,1\}^d$ indicating which candidates to hire. 
The feasible set $\cX$ encodes organizational constraints, such as a maximum number of hires $\sum_{i=1}^d x_i \leq k$, or maximum expertise-based quotas $\sum_{i \in I_j} x_i \leq k_j$ for subsets $I_j \subseteq [d]$, to name a few. 
Each candidate has an unknown value $c_i$, with $c \in \cC \subset \Re^d$ modeling prior information on these values. The objective function is $c^\top x$, the total value of the hired candidates.
A data set $\cD \subset \cQ := \{e_i \; :\; i\in [d]\}$ is a subset of candidates to interview, and each interview $q \in \cD$, $q_j=1$, reveals the value $c_j$ of a given candidate. The goal in this application is to select the smallest subset of candidates to interview to recover the optimal hiring decision: that is, the smallest sufficient data set. The constraints of this problem are totally unimodular, so $x \in \{0,1\}^d$ can be relaxed to $x \in [0,1]^d$ and the problem formulated as an LP \citep[Chapter 3]{wolsey2020integer}. Hence, our theory applies to this setting.

\textit{Task Structure and Uncertainty Set.}
In this example, the goal is to hire $20$ candidates from a pool of $d=100$ candidates. Each candidate is associated with two features: GPA and years of experience.
We study two settings: vanilla hiring, with only a total hire cap, and experience-constrained hiring, which also limits hires per seniority group. The decision sets are 
$\cX_{\text{vanilla}} := \{x \in \{0,1\}^d : \sum_{i=1}^d x_i \leq 20\}$ and $\cX_{\text{experience}} := \{x \in \cX_{\text{vanilla}} : \forall j \in [4], \sum_{i \in I_j} x_i \leq 8\},$ where $I_j$ is the set of candidates with $j$ years of experience. 
 We assume a misspecified linear model, i.e. candidate values belong to
{\setlength{\abovedisplayskip}{1pt}
 \setlength{\belowdisplayskip}{1pt}
 \setlength{\abovedisplayshortskip}{1pt}
 \setlength{\belowdisplayshortskip}{1pt}
\begin{align*}
\cC := \{c \in \mathbb{R}^d : \exists \alpha \in \mathbb{R}^2,\; \exists \varepsilon \in [-\eta, \eta],\; \ell \leq \alpha \leq u,\; c = \alpha^\top\phi  + \varepsilon\},
\end{align*}}
where $\eta \geq 0$ controls the misspecification level, and $\ell=(4,4), u =(5,5)$. $\phi$ is a feature matrix whose rows are GPAs and years of experience of candidates. 

The GPAs of candidates are generated using a uniform distribution in the interval $[2,4]$, and the level of experience is also uniform in $\{1,2,3,4,5\}$. 
\cref{fig:hiring experiment} is the output of \cref{alg:data collection algorithm}: the smallest set of candidates to interview to enable an optimal hiring decision.

\paragraph{Impact of $\cC$.} As misspecification increases (the uncertainty set $\cC$ grows larger, from left to right), so does the number of required interviews: more uncertainty requires more data points. 

\paragraph{Impact of $\cX$.} In the first row of \cref{fig:hiring experiment}, candidates fall into three groups: low potential value (never hired), high potential value (always hired), and mid uncertain value (interviewed)---an intuitive pattern given the task, automatically recovered by our algorithm.

When adding group hiring constraints---second row of \cref{fig:hiring experiment}---a similar pattern arise, but now across experience groups rather than the entire population: low misspecification yields separate treatment between experience groups, as values don’t overlap across experience levels; high misspecification leads to cross-experience group comparisons and mixing---again, an intuitive pattern given the new constraints. 

One would naively expect that since $\cX_{\text{experience}}$ has more constraints than $\cX_{\text{vanilla}}$, more data would be needed to make an optimal decision in the vanilla setting, but that is not necessarily true. In \cref{fig:hiring experiment}, we see that in the high misspecification regime, more data is needed for the experience-constrained setting than the vanilla setting. In reality, the data needed depends on the geometry of the decision set $\cX$ relative to the uncertainty set $\cC$, as can be seen from \cref{thm: relatively open characterization} and \cref{cor:dualpoly-carac}.

\begin{figure}
    \centering
    \includegraphics[width=1\linewidth]{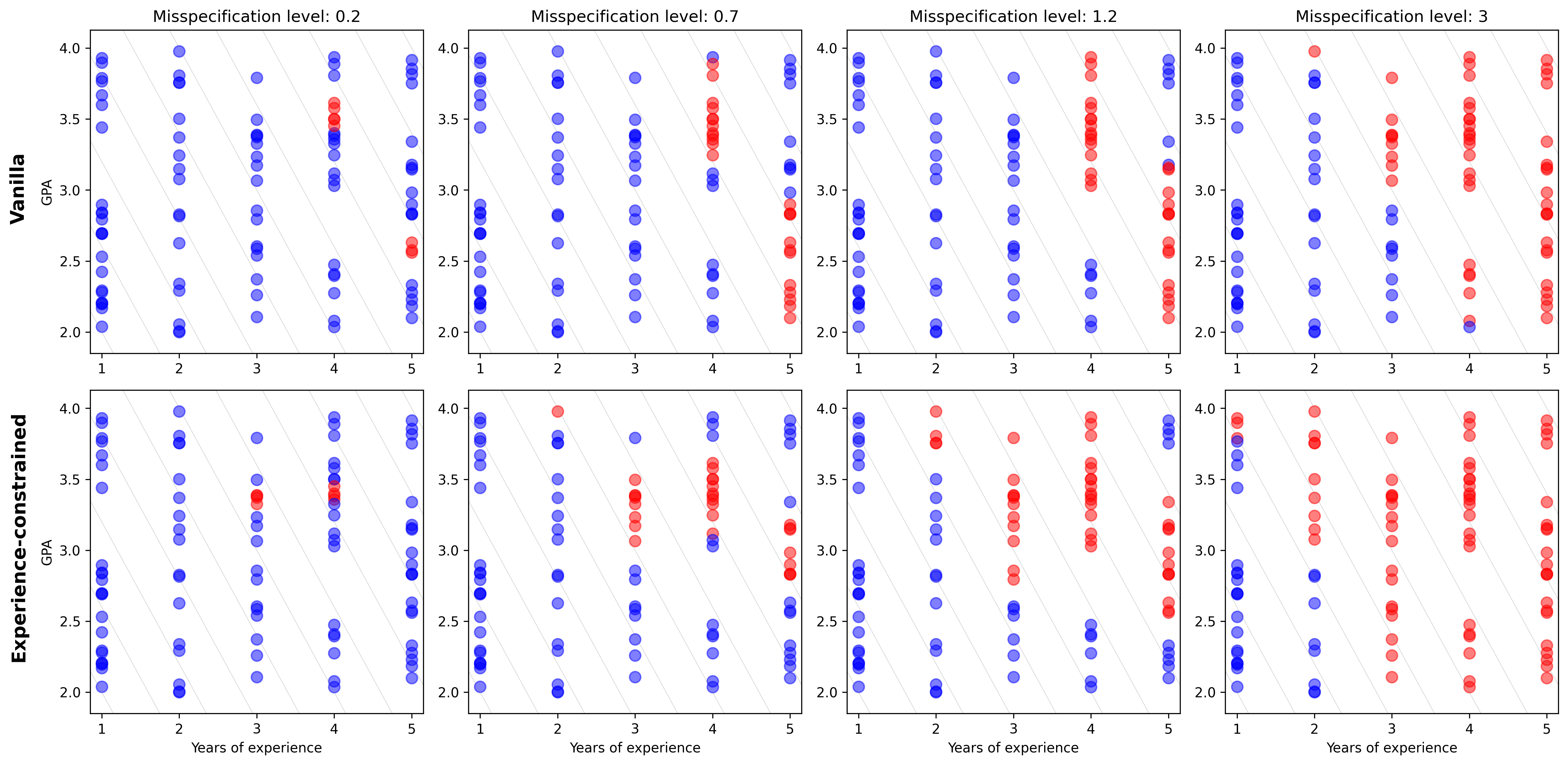}
    \caption{Candidates to be interviewed (in red) to make an optimal hiring decision. Number of candidates to interview from left to right for top and bottom row respectively: $8,24,31,52$ and $8,28,43,70$. }
    \label{fig:hiring experiment}
\end{figure}

\section{Conclusion: Limitations, Extensions and Open Problems}\label{sec:conclusion}

This paper introduces a framework for quantifying the informativeness of data sets in decision-making tasks. While our analysis yields sharp results, several natural extensions and open problems remain, of which we mention a few:

\begin{enumerate}
    \item \textit{Beyond LPs:} Our study focuses on linear optimization. Extending the framework to broader classes of problems—such as mixed-integer or general convex programs—would enable task-focused data collection algorithms for a wider range of applications. \label{ext: obj}
    \item \textit{Hardness:} We have proven that once we have a basis of $\spc{\dualpoly{\cX}{\cC}}$, the problem of finding a minimal sufficient data set remains NP-hard. However, it is still unclear whether computing a basis of $\spc{\dualpoly{\cX}{\cC}}$ can be done in polynomial time, as our algorithm uses MIPs.
    \item \textit{$\epsilon$-optimality:} In many practical applications, decision-makers are satisfied with near-optimal rather than exactly optimal solutions. Studying the notion of sufficiency in the sense of recovering an $\epsilon$ optimal solution would provide insights into the trade-off between optimality and data size.
    \label{ext: eps}
    
    \item \textit{General observation structure:} Data in our model provides partial information about the unknown parameter in the form of linear projections. Studying more general observation structures would further broaden the scope of the framework. \label{ext: obs}

    \item \textit{Uncertainty in $A$:} The uncertainty in our analysis lies in the cost vector. Yet in many important applications, uncertainty appears in the constraints (e.g., in the matrix $A$).

\end{enumerate}
These extensions can be captured through a generalized notion of sufficiency. Let $\{f_c,\: c\in \cC\}$ define a parametric family of objective functions, and $\mathbb{O}(\cdot,\cdot)$ denote an observation structure, where $\mathbb{O}(q,c)$ is the observation obtained from query $q$ when the true parameter is $c$.

\begin{definition}[Generalized Sufficiency] \label{def:gen-sufficient}
    A data set (or set of experiments) $\cD=\{q_1,\ldots,q_N\}$ is a $\epsilon-$\suff\ for an uncertainty set $\cC\subset \R^d$ and decision set $\cX$ if there exists a mapping $\hat x:\R^{N} \longrightarrow \cal \cX$ such that
    {\setlength{\abovedisplayskip}{0pt}
 \setlength{\belowdisplayskip}{0pt}
 \setlength{\abovedisplayshortskip}{0pt}
 \setlength{\belowdisplayshortskip}{0pt}
    \begin{align*}
        \forall c \in \cC, \quad f_c\left(\hat x\left(\mathbb{O}(q_1,c),\ldots,\mathbb{O}(q_N,c)\right)\right) \leq \min_{x\in \cX}f_c(x) + \epsilon.
    \end{align*}}
\end{definition}

This formulation incorporates extension \ref{ext: obj} through the general class of parametric objective $f_c$, and recovers our setting with the class $f_c:x \to c^\top x$. Extension \ref{ext: eps} is built in through requiring $\epsilon-$optimality of which our original framework is the special case $\epsilon=0$. The general observation structure $\mathbb{O}$ models extension \ref{ext: obs}, with our case corresponding to $\mathbb{O}(q,c) = c^\top q$.

A further important extension arises when observations are stochastic rather than deterministic.

This can be modeled by considering $\mathbb{O}_q(\cdot|c)$, the distribution of possible observations under query $q$ when the true parameter is $c$. A data set $\cD=\{q_1,\ldots,q_N\}$ is then $\epsilon-$\suff\ when there exists a mapping $\hat x:\R^{N} \longrightarrow \cal \cX$ verifying
{\setlength{\abovedisplayskip}{3pt}
 \setlength{\belowdisplayskip}{3pt}
 \setlength{\abovedisplayshortskip}{3pt}
 \setlength{\belowdisplayshortskip}{3pt}
    \begin{align*}
         \forall c \in \cC, \quad \Eb_{o_1 \sim \mathbb{O}_{q_1}(\cdot|c),\ldots,o_N \sim \mathbb{O}_{q_N}(\cdot|c)}\left[f_c\left(\hat x\left(o_1,\ldots,o_N\right)\right)\right] \leq \min_{x\in \cX}f_c(x) + \epsilon.
    \end{align*}
    }


This definition aligns closely with Blackwell’s informativeness framework. A common instance of such observation structure is noisy observation of the sort $\mathbb{O}_{q}(\cdot|c) = {\cal N}(c^\top q, \sigma)$, that is, observations are $c^\top q$ with an additive Gaussian noise. This would model classical experimentation settings such as surveys or randomized trials. Studying this setting would reveal not only which components of the decision problem must be queried, but also which precision (e.g., number of repeated experiments) per component is required to recover near-optimal solutions. Intuitively, some part of the problem need more refined estimation than others for the purpose of decision-making.

\bibliography{references.bib}

\newpage

\begin{APPENDICES}

\crefalias{section}{appsec}

\section{Proofs for \cref{sec:problem-formulation}}
\subsection{Proof of Proposition \ref{prop:equivalence-argmin}}\label{prop:equivalence-argmin proof}
Before proving the proposition, we need to introduce the following lemma.
\begin{lemma}\label{existence of unique argmin}
    For any $x^\star\in \cX^\angle$, there exists $c\in \R^d$ such that $\arg\min_{x\in \cX}c^\top x=\{x^\star\}$, i.e. for all $\delta \in D(x^\star)$, $c^\top \delta >0$.
\end{lemma}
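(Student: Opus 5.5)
To prove \cref{existence of unique argmin}, I will work with the optimality cone $\Lambda(x^\star)$ from \cref{optimality cone def}. It is the dual cone of the polyhedral cone $\FD(x^\star)$ and equals $\{c : c^\top\delta\ge 0,\ \forall\delta\in D(x^\star)\}$. The goal is a cost vector $c$ that is strictly positive on every extreme direction, and a check that such a $c$ makes $x^\star$ the unique minimizer.

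\textbf{Step 1: $\FD(x^\star)$ is pointed.} Suppose $\delta$ and $-\delta$ are both feasible directions. Then $x^\star\pm\varepsilon\delta\in\cX$ for some small $\varepsilon>0$. This writes $x^\star$ as the midpoint of two points of $\cX$, which contradicts extremality unless $\delta=0$. Hence $\FD(x^\star)\cap(-\FD(x^\star))=\{0\}$.

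\textbf{Step 2: a strictly positive functional.} For a pointed polyhedral cone $K$, finitely generated by its extreme directions $D(x^\star)$, I will show that $\mathrm{int}(K^\ast)\ne\varnothing$. Concretely, there exists $c$ with $c^\top\delta>0$ for every $\delta\in K\setminus\{0\}$.

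The plan is to argue by contradiction using Farkas/Gordan. If no $c$ satisfies $c^\top\delta_j>0$ for all generators $\delta_j\in D(x^\star)$ (finitely many up to scaling), then Gordan's alternative gives $\mu\ge0$, $\mu\neq0$, with $\sum_j\mu_j\delta_j=0$. Pick an index $k$ with $\mu_k>0$. Then $-\delta_k=\mu_k^{-1}\sum_{j\ne k}\mu_j\delta_j\in K$, and $\delta_k\ne0$, which contradicts pointedness from Step 1. Positivity on the generators then extends to all of $K\setminus\{0\}$, since each nonzero element is a nonnegative combination of generators with at least one positive coefficient.

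\textbf{Step 3: uniqueness of the minimizer.} Take any $y\in\cX$ with $y\ne x^\star$. By convexity, $\delta=y-x^\star$ lies in $\FD(x^\star)$ with $\varepsilon=1$, and $\delta\neq0$. So $c^\top y-c^\top x^\star=c^\top\delta>0$. This gives $\arg\min_{x\in\cX}c^\top x=\{x^\star\}$.

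The equivalence with the stated condition "for all $\delta\in D(x^\star)$, $c^\top\delta>0$" follows in two directions. Forward is Step 3. For the reverse, if $\arg\min=\{x^\star\}$, then each $\delta\in D(x^\star)$ is a nonzero feasible direction with $x^\star+\varepsilon\delta\ne x^\star$ in $\cX$, so $c^\top\delta>0$.

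The main obstacle is Step 2, the Gordan-type argument. It relies on $\FD(x^\star)$ being finitely generated by $D(x^\star)$, which uses the polyhedrality asserted in \cref{feasible directions polyhedral cone} together with the Minkowski–Weyl theorem for pointed cones.
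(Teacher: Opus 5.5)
Your proof is correct and follows essentially the paper's route: both arguments use a theorem of the alternative to place $-\delta$ in the cone generated by the remaining extreme directions, which contradicts the extremality of $x^\star$ (equivalently, the pointedness of $\FD(x^\star)$ in your Step~1). The paper gets there in two steps, first a summing argument to find some $\delta_N\in D(x^\star)$ with $\Lambda(x^\star)\perp\delta_N$ and then Farkas/separation, whereas your single appeal to Gordan's alternative does both at once; your Step~3 and the reverse direction also make explicit the ``i.e.'' equivalence that the paper only uses implicitly.
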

\begin{proof}{Proof. }
    Let $x^\star \in \cX^\angle$. Assume that such a $c\in \R^d$ does not exists. We first show that there exists $\delta^\star \in D(x^\star)$ such that $\Lambda(x^\star)\perp \delta^\star$. Suppose no such $\delta^\star$ exists, then for any $\delta \in D(x^\star)$, there would exist $v(\delta)\in \Lambda(x^\star)$ such that $v(\delta)^\top \delta >0$. Consequently, we have for any $\delta \in D(x^\star)$,
    \begin{align*}
        \left(\sum_{\delta'\in D(x^\star)}^{}v(\delta')\right)^\top \delta >0,
    \end{align*}
    which contradicts our initial assumption. 
    
    Let $N\in \mathbb N$ and $\delta_1,\dots,\delta_N$ such that $D(x^\star)=\{\delta_1,\dots,\delta_N\}$. Assume without loss of generality that $ \Lambda (x^\star)\perp \delta_N$. Consequently, we have for all $c \in \Re^d$
    \begin{align*}
        \left(\forall i \in [N-1],\; c^\top \delta_i\geq 0\right)\implies c^\top \delta_N\leq 0.
    \end{align*}
    We show that this implies that $-\delta_N$ belongs to the cone spanned by $\delta_1,\dots,\delta_{N-1}$, i.e. there exists $\mu_1,\dots,\mu_{N-1}\in \R^+$ such that $\sum_{i=1}^{N-1}\mu_i\delta_i=-\delta_N$. Assume that this is not true. Let $K$ be the cone spanned by $\delta_1,\dots,\delta_{N-1}$. Since $-\delta_N\not\in K$, then (by the separation lemma), there exists $u\in \R^d$ such that for all $h\in K$, we have $u^\top h\geq 0$ and $-u^\top \delta_N <0$. In particular, we have for all $i\in [N-1],$ $u^\top \delta_i\geq 0$ and $u^\top \delta_N>0$, a contradiction. Hence, there exists $\alpha_1,\dots,\alpha_{N-1}\in \R^+$ such that 
    \begin{align*}
        -\delta_N=\sum_{i=1}^{N-1}\alpha_i \delta_i.
    \end{align*}

    Consequently, both $\delta_N$ and $-\delta_N$ are feasible directions from $x^\star$ in $\cX$, which contradicts the fact that $x^\star$ is an extreme point.
\end{proof}

We now prove Proposition \ref{prop:equivalence-argmin}.
\begin{proof}{Proof. }    
    It is easy to see that \ref{item:fullargmin} implies \ref{item:singlesol}. We now prove that \ref{item:singlesol} implies \ref{item:fullargmin}. Assume that \ref{item:singlesol} is verified but not \ref{item:fullargmin}, that is $\cD$ is not a \sufficient{}. From Theorem \ref{thm: relatively open characterization}, there exists $\delta\in \Delta(\cX,\cC)$ such that $\delta \not\perp \dir{\cC}\cap (\Vect \cD)^\perp$. By definition, there exists $x^\star \in \cX^\angle$ and $c\in \cC$ such that $c\in F(x^\star,\delta)$. From Lemma \ref{existence of unique argmin}, there exists $v\in \R^d$ such that for all $\delta \in D (x^\star)$, $v^\top \delta >0$. Let $\varepsilon>0$ such that $B(c,\varepsilon)\subset \cC$. Let $\eta>0$ small enough such that $c+\eta v\in B(c,\varepsilon)$, and $\eta '$ small enough such that $c_{\eta,\eta'}:=c+\eta v - \eta '\delta_{\dir{\cC}\cap (\Vect \cD)^\perp}\in B(c,\varepsilon)$. For any $\delta' \in D(x^\star)$, we have
    \begin{align*}
        (c+\eta v)^\top \delta' = \underbrace{c^\top \delta'}_{\geq 0} +\underbrace{\eta v^\top \delta'}_{>0} >0.
    \end{align*}
    This means that $\arg\min_{x\in \cX}(c+\eta v)^\top x=\{x^\star\}$. Furthermore, we have 
    \begin{align*}
        c_{\eta,\eta'}^\top \delta = \underbrace{c^\top \delta}_{=0, \text{ as }c\in F(x^\star,\delta)} +\eta v^\top \delta - \eta ' \underbrace{\norm{\delta_{\dir{\cC}\cap (\Vect(\cD))^\perp}\textbf{}}^2}_{\neq 0, \text{ as }\delta \not\perp \dir{\cC}\cap (\Vect \cD)^\perp }.
    \end{align*}
    
    Consequently, when $\eta$ is small enough compared to $\eta'$, we have $c_{\eta,\eta'}^\top \delta<0$, i.e. $c_{\eta,\eta'} \not\in \Lambda(x^\star)$. This means that $x^\star \not\in \arg\min_{x\in \cX}c_{\eta,\eta'}^\top x $. Assume that a mapping $\hat{x}$ satisfying condition \ref{item:singlesol} of the proposition. We have $c+\eta v-c_{\eta,\eta'}=\eta'\delta_{\dir{\cC}\cap (\Vect(\cD))^\perp}\in (\Vect(\cD))^\perp$. This means that for all $i\in [N]$, we have $(c+\eta v)^\top q_i=c_{\eta,\eta'}^\top q_i$, and hence 
    \begin{align*}
        \hat{x}((c+\eta v)^\top q_1,\dots,(c+\eta v)^\top q_N)=\hat{x}(c_{\eta,\eta'}^\top q_1,\dots,c_{\eta,\eta'}^\top q_N),
    \end{align*}
    which implies that
    \begin{align*}
        \hat{x}(c_{\eta,\eta'}^\top q_1,\dots,c_{\eta,\eta'}^\top q_N)\in \left(\arg\min_{x\in \cX}c_{\eta,\eta'}^\top x\right)\cap \left(\arg\min_{\cX}(c+\eta v)^\top x\right)=\varnothing,
    \end{align*}
    which is impossible.

\end{proof}
\subsection{Proof of \cref{prop:sufficient:projections}}
\begin{proof}{Proof. } \;
    \begin{itemize}
        \item
        $(\Rightarrow)$ Assume that $\cD$ is a \sufficient{}. Let $c,c'\in \cC$ such that $c_{\Vect \cD}=c'_{\Vect \cD}$. We have for any $q\in \cD$, $c^\top q=c'^\top q$. Let $\hat{X}$ given by Definition \ref{def:sufficient}. We have $\hat X\left(c^\top q_1,\dots,c^\top q_N\right)=\hat X\left(c'^\top q_1,\dots,c'^\top q_N\right)$ i.e. $\arg\min_{x\in \cX}c^\top x=\arg\min_{x\in \cX}c'^\top x$.
        
        \item $(\Leftarrow)$ Assume that $\cD$ satisfies the property of the proposition. Since for any $c,c'\in \cC$ we have 
        $
            c_{\Vect \cD}=c'_{\Vect \cD}\Longleftrightarrow (c^\top q)_{q\in \cD}=(c'^\top q)_{q\in \cD},
        $
        then for any $c\in \cC$, we define $\hat X\left(c^\top q_1,\dots,c^\top q_N\right)$ to be equal to $\arg\min_{x\in \cX}c'^\top x$ for any $c'$ such that $c'_{\Vect \cD}=c_{\Vect \cD}$. This mapping is well-defined and verifies the desired property.
    \end{itemize}
\end{proof}
\section{Proof for \cref{sec:characterizing-sufficient-data sets}}

        \subsection{Proof of Proposition \ref{prop:suff:vectorspace}} \label{proof prop:suff:vectorspace}
        \begin{proof}{Proof. }
        We denote $F:=\Vect \cD$. The condition $F_0\cap \Ker A \subset \cC^\perp + \Vect \cD$ is equivalent to $F_0\cap \Ker A\perp \cC\cap F^\perp$, so in order to prove the equivalence with the 3rd proposition, we will prove the equivalence with $F_0\cap \Ker A\perp \cC\cap F^\perp$.
    \begin{itemize}
        \item $\eqref{item:prop:Cvector-space:KerA} \Rightarrow \eqref{item:prop:Cvector-space:suff}$ Assume that $\cC\cap F^\perp\perp F_0 \cap \Ker A$.
        Let $c,c'\in \cC$ such that $c_F=c'_F$. We will show that they have the same $\argmin$, which proves \suff{} as a result of \cref{prop:sufficient:projections}.
        Let $\cC' = (\cC\cap F^\perp)^\perp \cap \cC$. We have
        \begin{align*}
            c=c_{\cC\cap F^\perp} +c_{\cC'},\; c'=c'_{\cC\cap F^\perp} +c'_{\cC'}.
        \end{align*}
       
        Hence, we have
        \begin{align*}
            &\proj{F}{c}=\proj{F}{c'}\\
            &\Longrightarrow \proj{F}{c_{\cC\cap F^\perp} +c_{\cC'}}=\proj{F}{c'_{\cC\cap F^\perp} +c'_{\cC'}}
            \\&\Longrightarrow \underbrace{\proj{F}{c_{\cC\cap F^\perp}}}_{=0}+\proj{F}{c_{\cC'}}=\underbrace{\proj{F}{c'_{\cC\cap F^\perp}}}_{=0}+\proj{F}{c'_{\cC'}}
            \\ &\Longrightarrow \proj{F}{c_{\cC'}}=\proj{F}{c'_{\cC'}}
            \\ & \Longrightarrow c_{\cC'} - c'_{\cC'} \in F^\perp
        \end{align*}
       We have $c_{\cC'} - c'_{\cC'} \in F^\perp \cap \cC'\subset F^\perp \cap \cC$ and $\cC' \subset (F^\perp \cap \cC)^\perp$. Hence, $c_{\cC'} = c'_{\cC'}$.

        Using this result, we can write $c - c' = c_{\cC\cap F^\perp}-c'_{\cC\cap F^\perp} \in \cC\cap F^\perp$. We show that the mapping $x \in \cX \to (c-c')^\top x$ is constant. In fact, for $x,x' \in \cX$, we have
        \begin{align*} 
            (c-c')^\top x - (c-c')^\top x' = \underbrace{(c-c')^\top}_{\in \cC\cap F^\perp} \underbrace{(x-x')}_{\in F_0 \cap \Ker A}=0,
        \end{align*}
        by the assumption $\cC\cap F^\perp\perp F_0 \cap \Ker A$.
Hence, the mappings $x\longmapsto c^\top x$ and $x\longmapsto c'^\top x$ are identical in $\cX$, within a constant. Consequently, we have $\arg \min_{x\in \cX}c^\top x = \arg\min_{x\in \cX}c'^\top x$.
        \item $\eqref{item:prop:Cvector-space:suff}\Rightarrow \eqref{item:prop:Cvector-space:one-solution}$ This implication follows directly from \cref{prop:sufficient:projections}.

        \item $\eqref{item:prop:Cvector-space:one-solution}\Rightarrow \eqref{item:prop:Cvector-space:KerA}$ Assume that $(3)$ is not true, that is $\cC\cap F^\perp\not\perp F_0 \cap \Ker A $.
        Let $c\in \cC$.
        We would like to show that there exists $c'\in \cC$ such that $c_F=c_F'=a$ and $\arg\min_{x\in \cX}c^\top x \neq \arg\min_{x\in \cX}c'^\top x$. 
      Let $x^\star(c)\in \arg\min_{x\in \cX}c^\top x$. There exists a set of feasible directions for $x^\star(c)$, $V=\{\delta_1,\dots,\delta_r\} \subset \FD(x^\star(c))$, that spans $F_0 \cap \Ker A$ (see \cref{lemma:feasible directions in general polyhedron}). Since $V$ spans $F_0\cap \Ker A$, and $\cC\cap F^\perp\not\perp F_0 \cap \Ker A $, then there exists $\delta\in V$ such that $\proj{\cC\cap F^\perp}{\delta}\neq 0$. Let $M$ be a positive constant and define $c' = c -M \proj{\cC\cap F^\perp}{\delta}$. As $\cC$ is a vector space, and each term in the definition of $c'$ is in $\cC$, we have $c' \in \cC$. Moreover, we have $c'_F = c_F$. For all $\alpha >0$ such that $x^\star(c)+\alpha \delta \in \cX$, we have
    \begin{align*}
                c'^\top (x^\star(c)+\alpha \delta) &= {c'}^\top x^\star(c) + \alpha c^\top \delta - \alpha M\proj{\cC\cap F^\perp}{\delta}^\top \delta\\
                &=c'^\top x^\star(c) + \alpha c^\top \delta - \alpha M \norm{\proj{\cC\cap F^\perp}{\delta}}^2.
            \end{align*}
            When $M$ is set to be large enough, we can see that we have $c'^\top (x^\star(c)+\alpha \delta )<c'^\top x^\star(c)$, which means that $x^\star (c)\not\in \arg\min_{x\in \cX}c'^\top x$.
  
  \end{itemize}

  

Let us now prove the final part of the proposition. Let $K>0$ and $\hat{x}:\R^N \longrightarrow \cX$. We can show that $FD\left(\hat{x}\left(c^\top q_1,\dots,c^\top q_N\right)\right)$ (see Proposition \ref{feasible directions polyhedral cone} for the definition of $FD\left(\hat{x}\left(c^\top q_1,\dots,c^\top q_N\right)\right)$) spans $F_0\cap \Ker A$ (see Lemma \ref{lemma:feasible directions in general polyhedron}). Furthermore, we have $\cC\cap F^\perp \not \perp F_0 \cap \Ker A$. Hence, there exists $\delta\in FD\left(\hat{x}\left(c^\top q_1,\dots,c^\top q_N\right)\right)$ such that $\delta_{\cC\cap F^\perp}\neq 0$. The condition $c\in \cC$ allows $c_{\cC\cap F^\perp }$ to take any value in $\cC \cap F^\perp $ without changing the values of $c^\top q_1,\dots, c^\top q_N$. Consequently, we set $c_{\cC\cap F^\perp }=-M\delta_{\cC\cap F^\perp }$ where $M$ is a nonnegative number that we will set later. Hence, letting $\alpha >0$ such that $\hat{x}\left(c^\top q_1,\dots,c^\top q_N\right)+\alpha \delta \in \cX$, we have 
\begin{align*}
    c^\top (\hat{x}\left(c^\top q_1,\dots,c^\top q_N\right)+\alpha \delta)&=c^\top \hat{x}\left(c^\top q_1,\dots,c^\top q_N\right) + \alpha c_F^\top \delta + \alpha c_{\cC\cap F^\perp }^\top  \delta\\
    &=c^\top \hat{x}\left(c^\top q_1,\dots,c^\top q_N\right) + \alpha c_F^\top \delta  - M \alpha \norm{\delta_{\cC\cap F^\perp }}^2
\end{align*}
Hence, we have $c^\top \hat{x}\left(c^\top q_1,\dots,c^\top q_N\right) + \alpha c_F^\top \delta  - M \alpha \norm{\delta_{\cC\cap F^\perp }}^2 \geq \min_{x\in \cX}c^\top x$, i.e. 
\begin{align*}
    c^\top \hat{x}\left(c^\top q_1,\dots,c^\top q_N\right) \geq - \alpha c_F^\top \delta  + M \alpha \norm{\delta_{\cC\cap F^\perp }}^2 +\min_{x\in \cX}c^\top x.
\end{align*}
Taking $M\geq \frac{K +\alpha c_F^\top \delta }{\alpha \norm{\delta_{\cC\cap F^\perp }}^2 }$, we indeed get 
\begin{align*}
    c^\top \hat{x}\left(c^\top q_1,\dots,c^\top q_N\right) \geq K +\min_{x\in \cX}c^\top x.
\end{align*}
 \end{proof}

\subsection{Proof of Proposition \ref{feasible directions polyhedral cone}} \label{feasible directions polyhedral cone proof}
\begin{proof}{Proof. }
    Let $x^\star \in \cX^\angle$. We denote $J=\{i\in [d], x^\star_i = 0\}$ and $I_0=\{i\in [d],\; \exists x\in \cX,\; x_i\neq 0\}$. For every $\delta \in \R^d$, we have 
    \begin{align*}
        \delta \in \FD(x^\star) \Longleftrightarrow \exists \varepsilon>0,\; x^\star +\varepsilon \delta \geq 0\text{ and }A\delta =0
        \Longleftrightarrow A\delta = 0 \text{ and }\delta_j \geq 0 \text{ for every }j\in J.
    \end{align*}
    This means that $\FD(x^\star)$ is a polyhedral cone, and $\FD(x^\star)\subset \Ker A$. Furthermore, since $[d]\setminus I_0 \subset J$, we also have $\FD(x^\star) \subset F_0$ which yields $\FD(x^\star)\subset F_0 \cap \Ker A$.
\end{proof}

\subsection{Proof of Proposition \ref{optimality cone def}} \label{proof optimality cone def}
\begin{proof}{Proof. }
Let $x^\star \in \cX^\angle$. For every $c\in \R^d$, we have
$$x^\star \in \arg\min_{x\in \cX}c^\top x \Longleftrightarrow \forall \delta \in \FD(x^\star),\; c^\top \delta \geq 0 \Longleftrightarrow \forall \delta \in D(x^\star),\; c^\top \delta \geq 0. $$
\end{proof}

\subsection{Proof of \cref{thm: relatively open characterization}}
\begin{proof}{Proof. }
     We denote $F=\Vect \mathcal D$. Notice that we have $\Delta(\cX,\cC) \subset \dir{\cC}^\perp + F \Longleftrightarrow \Delta(\cX,\cC) \perp \dir{\cC} \cap F^\perp.$ We will now prove that  $\dataset$ is a \sufficient \;for $\cC$ if and only if $ \Delta(\cX,\cC) \perp \dir{\cC} \cap F^\perp$.
    \begin{itemize}
        \item $(\Leftarrow)$ Suppose $\Delta(\cX,\cC) \not \perp \dir{\cC} \cap F^\perp$. There exists $\delta \in \Delta(\cX,\cC)$ such that $\delta \not\perp \dir{\cC}\cap F^\perp$. By definition, there exists $x\in \cX^\angle$ such that $\delta \in D(x)$ and $F(x,\delta) \cap \cC \neq \varnothing$. Let $v\in F(x,\delta) \cap \cC$. Let $\delta_0\in \dir{\cC}\cap F^\perp$ such that $\delta_0 ^\top \delta <0$ ($\delta_0$ exists because $\delta \not \perp  \dir{\cC} \cap F^\perp$). As $\cC$ is relatively open, $v \in \cC = \relint \cC$ and we can assume without loss of generality that $v+\delta_0\in \relint{\cC}$ by rescaling $\delta_0$. This is by definition of $\relint{\cC}$: since there exists $\varepsilon >0$ such that $B(v,\varepsilon)\cap \aff \cC \subset \relint \cC$, and $v\in \cC \subset \aff{\cC}$, $\delta_0\in \dir{\cC}$, then $\delta_0$ can be multiplied by $\frac{\varepsilon}{2\norm{\delta_0}}$ to satisfy $v+\frac{\varepsilon}{2\norm{\delta_0}} \delta_0\in B(v,\varepsilon) \cap \aff \cC \subset \relint \cC$. We know that $v\in F(x,\delta)\subset \Lambda(x)$, and $(v+\delta_0)^\top \delta = \delta_0^\top \delta <0$ which implies that $v+\delta_0\not\in \Lambda(x)$. Finally, since we have $\delta_0 \in F^\perp$, we have $(v+\delta_0)_F=v_F+\delta_{0,F}=v_F$. However, 
        $v \in \Lambda(x)$ and $v+\delta \not \in \Lambda(x)$ which implies $x\in \arg\min_{x'\in \cX}v^\top x'$ and $x\not\in\arg\min_{x'\in \cX} (v+\delta_0)^\top x'$, meaning that $\arg\min_{x'\in \cX} (v+\delta_0)^\top x'\neq \arg\min_{x'\in \cX} v^\top x'$. This
        implies that $\cD$ is not a \suff~
   from \cref{prop:sufficient:projections}.

        \item $(\Rightarrow)$ Suppose $\cD$ is not \suff. From \cref{prop:sufficient:projections}, there exists $c,c'\in \cC$ such that $c_F=c'_F$ and $\arg\min_{x\in \cX}c^\top x \neq \arg\min_{x\in \cX}c'^\top x $. It follows from the definition of the optimality cones $\Lambda$ (\cref{optimality cone def}) that there exists $x\in \cX^\angle$ such that $c\in \Lambda (x)$ and $c'\not\in \Lambda (x)$ (see also \cref{cone equivalence}). For any $\alpha\in[0,1]$, we denote $c_\alpha:=(1-\alpha) c + \alpha c'$
        
        \begin{align*}
        \alpha^\star :&=\sup\left\{\alpha \in [0,1]\; : \; c_\alpha \in \Lambda(x) \right\} \\
        &= \sup\left\{\alpha \in [0,1]\; : \; c_\alpha^\top\delta \geq 0, \; \forall \delta \in D(x) \right\}.
        \end{align*}
        
        Since $\cC$ is convex, we have $c_{\alpha^\star} \in \cC$.  Since $\Lambda(x)$ is a closed set, we have $c_{\alpha^\star} \in \Lambda(x)$ and hence we have $c_{\alpha^\star} \neq c'$ i.e. $\alpha^\star<1$. Let $\varepsilon\in(0,1-\alpha^\star)$ small enough such that for any $\delta \in D(x)$ such that ${c_{\alpha^\star}}^\top \delta>0$, we have $c_{\alpha^\star+\varepsilon}^\top \delta >0$. As $c_{\alpha^\star + \epsilon} \not \in \Lambda(x)$, there exists $\delta \in D(x)$ for which ${c_{\alpha^\star + \epsilon}}^\top \delta<0$. Such $\delta$ must verify ${c_{\alpha^\star}}^\top \delta = 0$ given the condition defining $\epsilon$. Hence, $c_{\alpha^\star} \in \Lambda(x) \cap \{\delta\}^\perp = F(x,\delta)$, and $c_{\alpha^\star} \in \cC$, which implies $F(x,\delta) \cap \cC \neq \emptyset$ and therefore $\delta \in \Delta(\cX,\cC)$. Moreover, we have $\underbrace{(c_{\alpha^\star+\varepsilon}-c_{\alpha^\star})^\top}_{=\varepsilon(c'-c)\in \dir \cC \cap F^\perp} \delta =  c_{\alpha^\star+\varepsilon}^\top \delta \neq 0$, i.e. $\delta \not \perp \dir \cC \cap F^\perp$, and consequently we have $\Delta(\cX,\cC) \not\perp \dir \cC \cap F^\perp$.
        \end{itemize}
        \end{proof}
  \subsection{Proof of Theorem \ref{thm:span delta is dir x}} \label{proof thm:span delta is dir x}
 Before proving the theorem, we will have to introduce a few lemmas and definition.
\begin{lemma} \label{pseudo continuity of argmin}
    For any $c\in \R^d$, there exists $\varepsilon>0$ such that for any $c'$ satisfying $\norm{c-c'}<\varepsilon$, $\arg\min_{x\in \cX}c'^\top x\subset \arg\min_{x\in \cX}c^\top x$.
\end{lemma}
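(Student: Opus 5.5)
The plan is to reduce the statement to a finite comparison over extreme points, using the fact that $\cX$ is a bounded polyhedron. Let $\cX^\angle=\{x_1,\dots,x_K\}$. For any $c$, $\min_{x\in\cX}c'^\top x$ is attained on the face $\conv\bigl(\arg\min_{x\in\cX^\angle}c'^\top x\bigr)$, and $\arg\min_{x\in \cX}c'^\top x$ equals the convex hull of the optimal extreme points. Indeed, every $x\in\cX$ is a convex combination of extreme points, and such a combination is optimal if and only if every extreme point carrying positive weight is optimal. Hence it suffices to show that for $c'$ close to $c$, every extreme point optimal for $c'$ is also optimal for $c$. Taking convex hulls then gives $\arg\min_{x\in\cX}c'^\top x\subset\arg\min_{x\in\cX}c^\top x$.

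Fix $c$ and let $S=\arg\min_{x\in\cX^\angle}c^\top x$ with optimal value $v^\star$. If $S=\cX^\angle$ there is nothing to prove. Otherwise set
\[
\gamma:=\min_{x\in\cX^\angle\setminus S}c^\top x - v^\star>0,
\]
which is positive because the minimum is over a finite set of strictly suboptimal points. Let $R:=\max_{x\in\cX^\angle}\norm{x}$, which is finite. Choose $\varepsilon:=\gamma/(2R+1)$.

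Now take $\norm{c'-c}<\varepsilon$ and an extreme point $y\notin S$, and fix any $s\in S$. By Cauchy--Schwarz,
\[
c'^\top y - c'^\top s = (c^\top y - c^\top s) + (c'-c)^\top(y-s) \geq \gamma - 2R\varepsilon >0.
\]
So $y$ is not optimal for $c'$. Every extreme point optimal for $c'$ therefore lies in $S$, and the convex-hull argument above concludes the proof.

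The only step needing care is the convex-hull description of the argmin set. It follows from the fact that a polytope is the convex hull of its extreme points (Minkowski/Krein--Milman for polytopes), together with linearity of the objective. A point with positive weight on a suboptimal vertex has strictly larger cost than $v^\star$, so it cannot be optimal. This is the main thing I would verify in writing. Everything else is elementary finite-set continuity.
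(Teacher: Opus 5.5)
Your proof is correct, but it takes a different route from the paper's.

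The paper argues by contradiction. It takes a sequence $c'_n \to c$ such that, for each $n$, some extreme point outside $\arg\min_{x\in\cX} c^\top x$ is optimal for $c'_n$. Finiteness of $\cX^\angle$ then lets it fix a single such extreme point $x$ along a subsequence. Closedness of the optimality cone $\Lambda(x)$ forces $c\in\Lambda(x)$, which is the contradiction.

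You instead argue directly and quantitatively. The optimality gap $\gamma$ over the finitely many extreme points, together with the bound $R$, gives an explicit radius $\varepsilon=\gamma/(2R+1)$, and you need neither the cone machinery nor any subsequence extraction. You also make explicit the reduction from argmin sets to extreme points via the convex-hull description of the optimal face. The paper uses this step silently when it passes from $\arg\min_{x\in\cX}c'^\top x\not\subset\arg\min_{x\in\cX}c^\top x$ to the existence of an offending extreme point of $\cX$; it justifies this only separately, in \cref{cone equivalence}.

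What the paper's qualitative template buys is reuse. It applies the same subsequence-plus-closed-cone argument to the lemma in the noisy-measurement section. There only the projections onto $\Vect \cD$ are close, and compactness of $\cC$ replaces an explicit estimate. Your gap bound does not carry over directly to that setting, because $c'$ itself need not be near $c$.
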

\begin{proof}{Proof. }
    Assume that there exists $c\in \R^d$ such that for all $\varepsilon>0$, there exists $c'$ satisfying $\norm{c-c'}<\varepsilon$ and $\arg\min_{x\in \cX}c^\top x \not\subset \arg\min_{x\in \cX}c'^\top x$. There exists a sequence $(c'_n)_{n \in \integ}$ that converges to $c$ such that for all $n\in \mathbb N$, there exists $x\in \cX^\angle\setminus \arg\min_{x\in \cX}c^\top x$ such that $x\in \arg\min_{x\in \cX}c_n'^\top x$. Since there is a finite number of extreme points, there exists a subsequence $(c'_{\varphi(n)})_{n \in \integ}$ and $x\in \cX^\angle\setminus \arg\min_{y\in \cX}c^\top y$ such that for all $n\in \mathbb N$, we have $x\in \arg\min_{y\in \cX}c_{\varphi(n)}'^\top y$, i.e. $c'_{\varphi(n)}\in \Lambda(x)$. Hence, since $\Lambda(x)$ is closed, we have $c\in \Lambda(x)$ and $x\not\in \arg\min_{y\in \cX}c^\top y$ which is not possible.
\end{proof}

\begin{definition}[Extreme Point Neighbors]
    Let $\cC\subset \R^d$. For any two extreme points $x_1,x_2\in X^\angle$, we say that $x_1$ and $x_2$ are neighbors in $\cX$ if there exists an extreme direction $\delta \in D(x_1)$ such that $x_2=x_1+\delta$. We say that they are $\cC-$strong neighbors in $\cX$ if furthermore there exists $c\in \cC$ such that $x,x'\in \arg\min_{y\in \cX}c^\top y$.
\end{definition}

\begin{definition}[Connected and $\cC-$strongly Connected Points] \label{def:c-strong connectivity}
    For any subset $\cal Y\subset \cX^\angle$ and $\cC\subset \R^d$, for any pair of elements $x,x'\in \cal Y$, we say that $x,x'$ are connected by neighboring extreme points in $\cal Y$ if there exist $h\in \mathbb N$ and a sequence $x_1,\dots,x_h\in  \cal Y$ such that for all $i\in [h-1]$, $x_i$ and $x_{i+1}$ are neighbors in $\cX$ and $x_1=x$ and $x_h=x'$. We say that they are $\cC-$strongly connected, when $x_i$ and $x_{i+1}$ are $\cC-$strong neighbors.
    When there is no ambiguity, we say that $x$ and $x'$ are (strongly) connected. 
    
    We say that the set $\cal Y$ is ($\cC-$strongly) connected by neighboring extreme points if this property holds for any pair of extreme points in $\cal Y$. When there is no ambiguity, we say that $\cal Y$ is (strongly) connected. For any element $x$ of $\cal Y$, we call the ($\cC-$strong) connection class of $x$ the set of points in $\cal Y$ that are ($\cC-$strongly) connected by neighboring extreme points to $x$.
\end{definition}
\begin{lemma} \label{strong connection in argmin}
    For any $c\in \cC$, $\cX^\angle \cap \arg\min_{x\in \cX}c^\top x$ is $\cC-$strongly connected by neighboring extreme points in $\cX$.
\end{lemma}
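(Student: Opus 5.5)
The statement concerns a fixed $c\in\cC$ and the optimal face $\cF_c:=\arg\min_{x\in \cX}c^\top x$. The argument should be almost entirely about the structure of this face, with $c$ itself serving as the common witness for $\cC$-strongness.

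The first step is to note that $\cF_c$ is a nonempty face of the bounded polyhedron $\cX$. It is therefore a polytope, and its extreme points are exactly $\cX^\angle\cap \cF_c$. The second step reduces $\cC$-strong neighborliness to ordinary neighborliness inside the face: if $x_1,x_2\in \cX^\angle\cap\cF_c$ are neighbors in $\cX$, then both lie in $\arg\min_{y\in\cX}c^\top y$, so they are automatically $\cC$-strong neighbors with witness $c\in\cC$. It therefore suffices to show that any two extreme points of $\cF_c$ can be joined by a path of extreme points of $\cF_c$, each consecutive pair being neighbors in $\cX$.

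This step has two ingredients.

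\textbf{(a) Edges of $\cF_c$ are edges of $\cX$.} Let $x_1\in\cF_c^\angle$ and let $x_2$ be adjacent to $x_1$ in the polytope $\cF_c$. Then $[x_1,x_2]$ is a one-dimensional face of $\cF_c$. Since a face of a face is a face, it is also an edge of $\cX$. Hence $x_2-x_1$ is a positive multiple of an extreme direction in $D(x_1)$, namely the extreme ray of $\FD(x_1)$ generated by that edge. The definition of neighbors uses $x_2=x_1+\delta$ with $\delta\in D(x_1)$, so we rescale $\delta$, since extreme directions are defined only up to scaling.

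To make the face-of-a-face claim concrete in the paper's language, I would use Proposition~\ref{optimality cone def}. Write $\FD_{\cF_c}(x_1)=\FD(x_1)\cap\{c\}^\perp$, which is a face of the cone $\FD(x_1)$ because $c^\top\delta\ge0$ on $\FD(x_1)$. Extreme rays of a face of a pointed cone are extreme rays of the cone itself, so the edge direction lies in $D(x_1)$.

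\textbf{(b) The graph of a polytope is connected.} I would prove this by the standard simplex-type argument, applied within $\cF_c$. Given extreme points $x,x'$ of $\cF_c$, Lemma~\ref{existence of unique argmin}, applied to the polytope $\cF_c$, gives a vector $w$ with $\arg\min_{\cF_c}w^\top y=\{x'\}$. Starting at $x$, as long as the current vertex $z\neq x'$, $z$ is not optimal for $w$ over $\cF_c$. Then some feasible direction of $\cF_c$ at $z$ has $w^\top\delta<0$. Because $\FD_{\cF_c}(z)$ is generated by its extreme rays, some extreme ray $\delta$ also has $w^\top\delta<0$. Moving along the edge in direction $\delta$, which is bounded, reaches an adjacent vertex with strictly smaller $w$-value. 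Since there are finitely many vertices, the process terminates at $x'$.

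The main obstacle is carrying out (a) and (b) rigorously with the paper's definitions. The delicate points are that extreme rays of $\FD(z)\cap\{c\}^\perp$ are extreme rays of $\FD(z)$, and that moving along an extreme ray from a vertex ends at another vertex of $\cX$ lying in $\cF_c$. I would handle both points through the facial structure of the cone $\FD(z)$ and the boundedness of $\cX$.
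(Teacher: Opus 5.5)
Your proposal is correct and follows the same route as the paper's proof: the optimal face $\arg\min_{x\in\cX}c^\top x$ is a polytope whose vertices and edge directions are vertices and extreme directions of $\cX$, its vertex graph is connected, and $c$ itself witnesses $\cC$-strongness for every consecutive pair. The paper only asserts that extreme directions are inherited and that a polytope's vertex graph is connected, whereas you justify both (via $\FD(z)\cap\{c\}^\perp$ being a face of $\FD(z)$, and via a simplex-type descent), so your version is a more complete rendering of the same argument.
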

\begin{proof}{Proof. }
    Let $c\in \cC$. Every extreme point in $\arg\min_{x\in \cX}c^\top x$ is also an extreme point in $\cX$ (see \cref{cone equivalence}), and every extreme direction in $\arg\min_{x\in \cX}c^\top x$ is also an extreme direction in $\cX$. Hence, since $\arg\min_{x\in \cX}c^\top x$ is a bounded polyhedron, $\cX^\angle \cap \arg\min_{x\in \cX}c^\top x$ is connected by neighboring extreme points in $\cX$. Furthermore, by definition, since $\cX^\angle \cap \arg\min_{x\in \cX}c^\top x\subset \arg\min_{x\in \cX}c^\top x$, then $\cX^\angle \cap \arg\min_{x\in \cX}c^\top x$ is $\cC-$strongly connected.
\end{proof}
\begin{lemma} \label{dual poly is strongly connected}
     When $\cC$ is convex, $\dualpoly{\cX}{\cC}\cap \cX^\angle$ is $\cC-$strongly connected by neighboring extreme points.
\end{lemma}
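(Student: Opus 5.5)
Take two extreme points $x, x' \in \dualpoly{\cX}{\cC}\cap\cX^\angle$, with $x\in\arg\min_{y\in\cX} c^\top y$ and $x'\in\arg\min_{y\in\cX} c'^\top y$ for some $c,c'\in\cC$. Connect them by walking along the segment $c_\alpha := (1-\alpha)c+\alpha c'$, $\alpha\in[0,1]$. By convexity every $c_\alpha$ lies in $\cC$, so each face $\arg\min_{y\in\cX} c_\alpha^\top y$ is a face of optimal solutions for a cost in $\cC$. By \cref{strong connection in argmin}, the extreme points of each such face are $\cC$-strongly connected among themselves. It therefore suffices to chain finitely many of these faces so that consecutive faces share an extreme point. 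Concatenating the resulting paths then yields a $\cC$-strong path from $x$ to $x'$.

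To build the chain, use \cref{pseudo continuity of argmin}. For each $\alpha\in[0,1]$ it gives an open neighborhood $U_\alpha$ of $\alpha$ in $[0,1]$ such that $\arg\min_{y} c_\beta^\top y \subset \arg\min_{y} c_\alpha^\top y$ for all $\beta\in U_\alpha$. By compactness of $[0,1]$, finitely many $U_{\alpha_1},\dots,U_{\alpha_k}$ cover $[0,1]$; order them so that $\alpha_1<\dots<\alpha_k$. After discarding redundant intervals, consecutive ones overlap, so we can pick $\beta_j\in U_{\alpha_j}\cap U_{\alpha_{j+1}}$. The set $\arg\min_y c_{\beta_j}^\top y$ is a nonempty bounded face, so it contains an extreme point $z_j\in\cX^\angle$. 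By the inclusion property, $z_j$ lies in both $\arg\min_y c_{\alpha_j}^\top y$ and $\arg\min_y c_{\alpha_{j+1}}^\top y$. Similarly, since $0\in U_{\alpha_1}$ for some chosen index, $x\in\arg\min_y c_0^\top y\subset \arg\min_y c_{\alpha_1}^\top y$, and likewise $x'\in\arg\min_y c_{\alpha_k}^\top y$.

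Inside each face $\arg\min_y c_{\alpha_j}^\top y$ with $c_{\alpha_j}\in\cC$, \cref{strong connection in argmin} connects $z_{j-1}$ to $z_j$ (with $z_0=x$ and $z_k=x'$) by a $\cC$-strong path. Every intermediate point of such a path is an extreme point optimal for some cost in $\cC$, so it lies in $\dualpoly{\cX}{\cC}\cap\cX^\angle$, as the definition of connectivity in $\mathcal Y=\dualpoly{\cX}{\cC}\cap\cX^\angle$ requires. Concatenating these paths proves the lemma.

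The main obstacle is the covering bookkeeping. One must ensure the finite subcover can be ordered so that consecutive neighborhoods intersect, and that the endpoints $0$ and $1$ are handled correctly. Ordering the intervals and removing any interval contained in the union of the others handles this, since $[0,1]$ is connected. A secondary subtlety is that \cref{strong connection in argmin} is stated for extreme points of $\cX$ lying in the optimal face. This is exactly what we use, because the $z_j$ are chosen as extreme points of a face of $\cX$ and are therefore extreme points of $\cX$.
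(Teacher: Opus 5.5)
Your proof is correct and uses the same ingredients as the paper's proof. Convexity keeps the segment $c_\alpha$ inside $\cC$, \cref{pseudo continuity of argmin} gives local inclusion of optimal faces along that segment, and \cref{strong connection in argmin} connects extreme points within each optimal face. The only difference is how you handle $[0,1]$ topologically: the paper argues by contradiction, taking the largest $\alpha^\star$ at which the optimal face still meets the $\cC$-strong connection class of $x$ and applying pseudo-continuity only at $\alpha^\star$. You instead build the chain explicitly from a finite subcover. Your ordering and overlap bookkeeping is valid once the cover is made irredundant, and a Lebesgue-number partition $0=t_0<\dots<t_n=1$ would shorten it.
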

\begin{proof}{Proof. }
    Assume that there exist $x,x'\in  \dualpoly{\cX}{\cC}\cap \cX^\angle$ that are not strongly connected. Let $c,c'\in \cC$ such that $x\in \arg\min_{y\in \cX}c^\top y$, $x'\in \arg\min_{y\in \cX}c'^\top y$. For any $\alpha \in [0,1]$, we denote $c_\alpha:=(1-\alpha)c+\alpha c'$. Let 
    \begin{align*}
        U:=\{x^\star \in \cX^\angle,\; \exists \alpha\in[0,1],\; x^\star \in \arg\min_{y\in \cX}c_\alpha^\top y\}\subset \dualpoly{\cX}{\cC}\cap \cX^\angle.
    \end{align*}
    
    Let $K$ be the intersection of $U$ and the connection class of $x$. We have $x'\not\in K$. Let
    
    \begin{align*}
        \alpha^\star =\max\left\{\alpha \in [0,1],\;  K \cap \arg\min_{y\in \cX}c_\alpha^\top y \neq \varnothing\right\}.
    \end{align*}
    
    If $\alpha^\star=1$, then there exists $v\in \arg\min_{y\in \cX}c'^\top y$ such that $v\in K$. From Lemma \ref{strong connection in argmin}, $\cX^\angle \cap \arg\min_{y\in \cX}c'^\top y$ is $\cC-$strongly connected and $v\in K\cap \cX^\angle \cap \arg\min_{y\in \cX}c'^\top y$ and consequently $x'\in K$, and therefore is connected to $x$ which contradicts our assumption. Hence, we necessarily have $\alpha^\star<1$. 
    Furthermore, from Lemma \ref{pseudo continuity of argmin}, there exists $\varepsilon\in (0,1-\alpha^\star)$ such that 

    \begin{equation}\label{eq:intersec_argmins}
    \arg\min_{y\in \cX}c_{\alpha^\star + \varepsilon}^\top y \cap \arg\min_{y\in \cX}c_{\alpha^\star}^\top y \neq \varnothing.
    \end{equation}

    As $K\cap \arg\min_{y\in \cX}c_{\alpha^\star}^\top y \neq \varnothing$ and $\arg\min_{y\in \cX}c_{\alpha^\star}^\top y $ is $\cC-$strongly connected from \cref{strong connection in argmin}, we have $\arg\min_{y\in \cX}c_{\alpha^\star}^\top y \subset K$. Combined with \eqref{eq:intersec_argmins}, it implies that $K\cap \arg\min_{y\in \cX}c_{\alpha^\star+\varepsilon}^\top y \neq \varnothing$. This contradicts the supremum definition of $\alpha^\star$.
\end{proof}

We have now enough tools to prove the theorem.

\begin{proof}{Proof. } Proof of \cref{thm:span delta is dir x}

We have
    \begin{align} 
        \Vect \Delta(\cX,\cC)& \underset{(1)}{=} \Vect \{x_1-x_2,\; x_1,x_2\in \cX^\angle\cap \dualpoly{\cX}{\cC},\; x_1 \text{ and }x_2 \text{ are $\cC-$strong neighbors}\} \label{set 1}
        \\& \underset{(2)}{=} \Vect \{x_1-x_2,\; x_1,x_2\in \cX^\angle\cap \dualpoly{\cX}{\cC}\} \label{set 2}
        \\& \underset{(3)}{=}\dir{\cX^\angle\cap \dualpoly{\cX}{\cC}} \label{set 3}
        \\&\underset{(4)}{=}\dir{\dualpoly{\cX}{\cC}}.\label{set 5}
    \end{align}
    Let's justify each of the equalities above.
    \begin{itemize}
        \item (1) Let $\delta \in \Delta(\cX,\cC)$. There exists $c\in \cC$ and $x\in \cX^\angle$ such that $c\in F(x,\delta)$. This means that $x\in \arg\min_{y\in \cX}c^\top y$, $\delta$ is an extreme direction for $x$ in $\cX$, and $c^\top \delta=0$. Consequently, there exists $\eta>0$ such that $x':=x+\eta \delta $ is an extreme point
        , that is a neighbor of $x$ by definition. Also, we have $x'\in \arg\min_{y\in \cX}c^\top y$. Hence, $\delta =\frac{1}{\eta}(x'-x)$, which proves
        $$\Delta(\cX,\cC)\subset \Vect\{x_1-x_2,\; x_1,x_2\in \cX^\angle\cap \dualpoly{\cX}{\cC},\; x_1 \text{ and }x_2 \text{ are $\cC-$strong neighbors}\}.$$
        Conversely, if $x_1,x_2\in \cX^\angle \cap \dualpoly{\cX}{\cC}$ are $\cC-$strong neighbors, then there exists an extreme direction $\delta$ for $x_1$ such that $x_2=x_1 + \delta$ and $c\in \cC$ such that $x_1,x_2\in \arg\min_{y\in \cX}c^\top y$. Hence, we have $c^\top \delta=0$, and consequently $c\in F(x_1,\delta)$, which means that $\delta \in \Delta(\cX,\cC)$, i.e. $x_1-x_2 \in \Delta (\cX,\cC)$. This proves the desired equality.
        \item (2) Set \cref{set 1} is clearly a subset of set \cref{set 2}. Let's prove the converse inclusion. Let $x,x'\in \cX^\angle \cap \dualpoly{\cX}{\cC}$. According to Lemma \ref{dual poly is strongly connected}, there exists $h\in \mathbb N$ and a sequence $x_1,\dots,x_h\in \cX^\angle \cap \dualpoly{\cX}{\cC}$ such that $x_1=x$ and $x_h=x'$ and for all $i\in [h-1]$, $x_i,x_{i+1}$ are $\cC-$strongly connected. Hence, we have 
        \begin{align*}
            x-x'=\sum_{i=1}^{h-1}x_{i+1}-x_i.
        \end{align*}
        All of the terms in the sum above are in set $\eqref{set 1}$ and therefore their sum as well, by linearity. Hence, we indeed have the inclusion.
        \item (3) This equality is immediate since for any $x_1,x_2 \in \cX^\angle\cap \dualpoly{\cX}{\cC}$, $x_1-x_2=x_1 - x_0 -(x_2-x_0)$ for any $x_0\in  \cX^\angle\cap \dualpoly{\cX}{\cC}$ and consequently $x_1-x_2\in \dir{ \cX^\angle\cap \dualpoly{\cX}{\cC}}$. 
        \item (4) In order to prove this equality, we prove that $\dualpoly{\cX}{\cC}\subset \conv(\cX^\angle \cap \dualpoly{\cX}{\cC})$. Let $x\in \dualpoly{\cX}{\cC}$ and $c\in \cC$ such that $x\in \arg\min_{y\in \cX}c^\top y$. There exists $\alpha_1,\dots,\alpha_k\in (0,1]$ such that $\alpha_1+\dots+\alpha_k=1$ and $x_1,\dots,x_k\in \cX^\angle$ such that $x=\sum_{i=1}^{k}\alpha_kx_k$. We have 
        \begin{align*}
            \min_{y\in \cX}c^\top y \geq \sum_{i=1}^{k}\alpha_kc^\top x_k \text{ i.e. }\sum_{i=1}^{k}\alpha_k(c^\top x_k-\min_{y\in \cX}c^\top y)\leq 0.
        \end{align*}
        All of the terms in the sum are positive, and are consequently equal to $0$. Hence we have $x_1,\dots,x_k\in \arg\min_{y\in \cX}c^\top y \subset \dualpoly{\cX}{\cC}$. Consequently, we have $x\in \conv(\cX^\angle \cap \dualpoly{\cX}{\cC})$. Hence, we have 
        \begin{align*}
            \dir{\dualpoly{\cX}{\cC}}&\subset \dir{\conv(\cX^\angle \cap \dualpoly{\cX}{\cC})}\\&=\dir{X^\angle \cap \dualpoly{\cX}{\cC}} \\&\subset \dir{\dualpoly{\cX}{\cC}}.
        \end{align*}
        This proves the desired equality.
    \end{itemize}
\end{proof}

\subsection{Proof of \cref{cor:MIP-sufficiency}}
\begin{proof}{Proof. }
    Since minimizing $c^\top x$ when $x$ belongs to $\cX\cap\{0,1\}^d$ is the same as minimizing over its convex hull $\conv (\cX\cap\{0,1\}^d)$ which is a polyhedron, we have $\dir{\dualpoly{\conv(\cX \cap\{0,1\}^d)}{\cC}} = \dir{\dualpoly{(\cX \cap\{0,1\}^d)}{\cC}}$. The result then follows directly from \cref{cor:dualpoly-carac}.

\end{proof}

\section{Proofs for \cref{sec:algorithm}}
 
\subsection{Proof of \cref{thm:alg_termination}}
\begin{proof}{Proof. }\;
\begin{itemize}
    \item We first show that when the algorithm terminates, i.e., the condition of the while loop is no longer satisfied, then with probability 1, $\dir{\dualpoly{\cX}{\cC}}\subset\Vect \cD$.
    Notice that the constraints in the minimization and maximization problems in Algorithm  \ref{algorithm to compute delta} encode complimentary slackness and, therefore are equivalent to $$\min / \max \{ \alpha^\top \proj{(\Vect \cD)^\perp}{x^\star - x_0}  \; : \; c \in \cC, \; x^\star \in \argmin_{x \in \cX} c^\top x\}.$$
    By definition of $\dualpoly{\cX}{\cC}$, this equivalent to $$\min / \max \{ \alpha^\top \proj{(\Vect \cD)^\perp}{x^\star - x_0}  \; : \; x^\star \in \dualpoly{\cX}{\cC}\}.$$
    
     If the two problems have an optimal value equal to $0$, then $\proj{(\Vect \cD)^\perp}{\dir{\dualpoly{\cX}{\cC}}}\perp \alpha$ i.e. $\alpha \in \proj{(\Vect \cD)^\perp}{\dir{\dualpoly{\cX}{\cC}}}^\perp$. Unless $\proj{(\Vect \cD)^\perp}{\dir{\dualpoly{\cX}{\cC}}}^\perp=\R^d$, this set is of empty interior and its Lebesgue measure is equal to $0$, and consequently the probability of having $\proj{(\Vect \cD)^\perp}{\dir{\dualpoly{\cX}{\cC}}}\perp \alpha$ is zero since $\alpha$ has a continuous distribution. Hence, with probability $1$, we have $\proj{(\Vect \cD)^\perp}{\dir{\dualpoly{\cX}{\cC}}}=\{0\}$ i.e. $\dir{\dualpoly{\cX}{\cC}}\subset\Vect \cD$.

    \item  We now show that at every step of the algorithm, the dimension of the span of $\cD$ increases by $1$, and that it remains a linearly independent set, as well as satisfies $\Vect \cD \subset \dir{\dualpoly{\cX}{\cC}}$. Indeed, initially, $\cD$ is empty and is hence a linearly independent set and satisfies $\Vect \cD \subset \dir{\dualpoly{\cX}{\cC}}$. Assuming that $\cD$ is a linearly independent set and that $\Vect \cD \subset \dir{\dualpoly{\cX}{\cC}}$, if there exists $x\in \dualpoly{\cX}{\cC}$ such that $\alpha^\top \proj{(\Vect \cD)^\perp}{x_0-x}\neq 0$, then $\proj{(\Vect \cD)^\perp}{x_0-x}\neq 0$ with probability $1$ and consequently $x_0-x\in \dir{\dualpoly{\cX}{\cC}}\setminus \Vect \cD$. Hence, we have $\dim (\Vect (\cD\cup\{x_0-x\}))=\dim (\Vect \cD)+1$ and $\cD\cup\{x_0-x\}$ is a linearly independent set and satisfies $\Vect (\cD\cup\{x_0-x\}) \subset \dir{\dualpoly{\cX}{\cC}}$, which proves the desired result.
    \item Finally, combining the two results above, when the algorithm terminates, $\cD$ is a linearly independent set, and $\Vect \cD=\dir{\dualpoly{\cX}{\cC}}$ i.e. $\cD$ is a basis of $\dir{\dualpoly{\cX}{\cC}}$ with probability 1. Furthermore, the analysis above show that the algorithm indeed terminates after $\dim \dir{\dualpoly{\cX}{\cC}}$ iterations of the while loop.
\end{itemize}

\end{proof}

\subsection{Proof of \cref{sufficient data set lowerbound}}
\begin{proof}{Proof.}
    Assuming $\cD$ is sufficient data set, we have

    \begin{align*}
        \abs{\cD}&\geq \dim \Vect \cD \geq \dim \proj{\spc{\cC}}{\Vect \cD}\geq \dim \proj{\spc{\cC}}{A}\\&= \dim \spc{\dualpoly{\cX}{\cC}} - \dim \spc{\dualpoly{\cX}{\cC}} \cap \spc{\cC}^\perp =r(\cX,\cC).
    \end{align*}
\end{proof}

\subsection{Proof of \cref{lemma:problem equivalence}}
\begin{proof}{Proof.} \;
    \begin{itemize}
        \item $(\Rightarrow)$ Since $\spc{\dualpoly{\cX}{\cC}}\subset \spc{\cC}^\perp+\Vect \cD$, then we have $\proj{\spc{\cC}}{\spc{\dualpoly{\cX}{\cC}}}\subset \proj{\spc{\cC}}{\spc{\cC}^\perp+\Vect \cD}=\proj{\spc{\cC}}{\Vect \cD}$.
        \item $(\Leftarrow)$ Assume that $\proj{\spc{\cC}}{\spc{\dualpoly{\cX}{\cC}}}\subset \proj{\spc{\cC}}{\Vect \cD}$. Let $a\in \spc{\dualpoly{\cX}{\cC}}$. There exists $v\in \Vect \cD$ such that $\proj{\spc{\cC}}{a}=\proj{\spc{\cC}}{\Vect \cD}$. Hence, we have $a=v + \proj{\spc{\cC}^\perp}{a} - \proj{\spc{\cC}^\perp}{v}\in \spc{\cC}^\perp+\Vect \cD,$ which gives the desired result.
    \end{itemize}
\end{proof}

\subsection{Proof of \cref{hardness result}} \label{hardness result proof}
\begin{proof}{Proof. }


We will show that \cref{problem: space coverage problem} is equivalent to the sparse regression problem. Let $v\in \R^d$, $\cX=\conv\{0,v\}$ and $\cC=\R^d$. Let $\cQ$ be a finite set in $\R^d$ and $M$ a matrix whose columns are the elements of $\cQ$. In this case, \cref{problem: space coverage problem} is equivalent to finding the smallest subset $\cD\subset \cQ$ such that $v\in  \Vect{\cD}$. This shows that our problem is equivalent to finding the smallest subset (i.e. the sparsest linear combination of vectors) of $\cQ$ that spans $v$. This is equivalent to finding the sparsest vector $\phi\in \R^{\abs{\cQ}}$ such that $M\phi = v$ where $M$ is the matrix whose columns are the elements of $\cQ$, i.e. solve the problem
\begin{align*}
    \min_{}\norm{\phi}_0 \text{ s.t. }M\phi=v,
\end{align*}
where $\norm{\phi}_0$ is the number of nonzero coordinates of $\phi$. This problem is known as the sparse representation problem and is NP-hard \citep{natarajan1995sparse}.
\end{proof}
\subsection{Algorithm to compute a minimal sufficient data set in the basis vectors case}

\begin{algorithm}[h] 
    \caption{Computing a smallest \sufficient\ when $\cQ$ is a basis of $\R^d$}
    \label{alg:data collection algorithm}
    \KwIn{Polyhedron $\cX$, Uncertainty Set $\cC$, Query Set $\cQ = \{q_1,\ldots,q_d \}$ (basis of $\R^d$)}
    \KwOut{A minimal \suff\ data set under constraint $\cD \subset \cQ$.}
    Find $\{v_1,\dots,v_k\}$ a basis of $\spc{\dualpoly{\cX}{\cC}}$ using \cref{alg:LP_case}
    
    $Q \leftarrow [q_1,\ldots,q_d]$
    
    \textbf{return} $\cD:=\{q_i \; : \; i\in [d] \; \text{s.t.} \;\exists j\in [k],\; (Q^{-1}v_j)^\top e_i\neq 0\}$.
\end{algorithm}

\subsection{Proof of \cref{prop: hardness Q basis}}
\begin{lemma}
    Any matrix $H\in \R^{d\times d}$ in $\R^{d\times d}$ can be written as $PM$ where $P\in \R^{d\times d}$ is the orthogonal projection matrix on $\Vect H$, and $M\in \R^{d\times d}$ is an invertible matrix.
\end{lemma}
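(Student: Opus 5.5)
The plan is to build $M$ by hand from a singular value decomposition of $H$. Since $P$ is the orthogonal projection onto $\Vect H$ (the column space of $H$), we have $PH=H$. So it is enough to perturb $H$ by something whose range lies in $(\Vect H)^\perp$, chosen so that the perturbed matrix becomes invertible; the projection then removes the perturbation.

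Let $k=\mathrm{rank}\, H$ and write $H=U\Sigma W^\top$ with $U,W\in\R^{d\times d}$ orthogonal and $\Sigma=\mathrm{diag}(\sigma_1,\dots,\sigma_k,0,\dots,0)$, where $\sigma_i>0$. The first $k$ columns $u_1,\dots,u_k$ of $U$ form an orthonormal basis of $\Vect H$. Hence
\[
P=\sum_{i\le k}u_iu_i^\top = U\,\mathrm{diag}(1,\dots,1,0,\dots,0)\,U^\top ,
\]
with $k$ ones. Define $\Sigma'=\mathrm{diag}(\sigma_1,\dots,\sigma_k,1,\dots,1)$ and set $M:=U\Sigma' W^\top$.

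Two routine checks finish the argument.
\begin{itemize}
\item[(i)] $M$ is invertible: it is a product of two orthogonal matrices and a diagonal matrix with nonzero diagonal entries.
\item[(ii)] $PM=H$: compute $PM=U\,\mathrm{diag}(1,\dots,1,0,\dots,0)\,\Sigma' W^\top=U\Sigma W^\top=H$, because multiplying $\Sigma'$ by the $0/1$ diagonal matrix zeroes out exactly the entries that were changed from $0$ to $1$.
\end{itemize}
The edge cases are covered by the same formulas: $H=0$ gives $P=0$ and $M=UW^\top$, and invertible $H$ gives $P=I$ and $M=H$.

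Equivalently, and without coordinates, one can write $M=H+B$, where $B$ maps $\ker H$ isomorphically onto $(\Vect H)^\perp$ and vanishes on $(\ker H)^\perp$. Such an isomorphism exists because the two spaces have the same dimension $d-k$. If $Mx=0$, then $Hx\in\Vect H$ and $Bx\in(\Vect H)^\perp$ must both vanish. Now $Bx=0$ means the component of $x$ in $\ker H$ is zero, so $x\in(\ker H)^\perp$, while $Hx=0$ means $x\in\ker H$; hence $x=0$. Finally $PM=PH+PB=H$, since $PB=0$.

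I do not expect a real obstacle. The only point needing care is the dimension match $\dim\ker H=\dim(\Vect H)^\perp$, which follows from rank–nullity and makes the completion to an invertible matrix possible.
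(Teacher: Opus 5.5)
Your proof is correct and is essentially the paper's argument. The paper picks an orthogonal $U$ whose first $k$ columns span $\Vect H$, notes that $U^\top H$ has $k$ linearly independent nonzero rows, completes them to an invertible $V$, and sets $M=UV$, so that $H=U\,\mathrm{diag}(I_k,0)\,U^\top\, UV=PM$; your SVD construction is the special case $V=\Sigma' W^\top$, whose completing rows are the right singular vectors $w_{k+1}^\top,\dots,w_d^\top$ spanning $\ker H$, and your $M=H+B$ variant is the same idea written without coordinates.
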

\begin{proof}{Proof.}
Let $H$ be a matrix in $\R^{d\times d}$ such that $\text{rank}\; H=k\in \mathbb N$. Let $U$ an orthogonal matrix whose first $k$ columns are a basis of $\Vect H$. Since the last $d-k$ columns of $U$ are orthogonal to all of the columns of $H$, the last $d-k$ rows of $U^\top H$ are equal to zero. Furthermore the nonzero rows of $U^\top H$ iare linearly independent, since there are $k$ of then and $\text{rank }U^\top H=\text{rank }H=k$. Hence, we can construct an invertible matrix $V$ such that its first $k$ rows are equal to the rows of $U^\top H$. Hence, we have
$$U^\top H=\begin{pmatrix}
        I_k & 0 \\ 0 & 0
    \end{pmatrix} V.$$
Hence, we have 
\begin{align*}
    H=\underbrace{U\begin{pmatrix}
        I_k & 0 \\ 0 & 0
    \end{pmatrix} U^\top}_{:=P} \underbrace{UV}_{:=M}=PM.
\end{align*}
    $M$ is invertible and $P$ is the othogonal projection matrix on $\Vect H$, which proves the desired result.
\end{proof}

\begin{proof}{Proof of the proposition.}
Let $M_\cQ$ be a matrix whose columns are elements of $\cQ$. Assume that $\spc{\dualpoly{\cX}{\cC}}$ is of dimension equal to $1$. Let $\delta\in \spc{\dualpoly{\cX}{\cC}}\setminus \{0\}$. \cref{problem: space coverage problem} is equivalent to finding the sparsest vector $S\in \R^d$ such that $$P_{\spc{\cC}}\delta=P_{\spc{\cC}}M_\cQ S,$$
where $P_{\spc{\cC}}$ is the orthogonal projection matrix on $\spc{\cC}$. Since $\spc{\cC}$ can be taken equal to any space, $M_\cQ$ can be taken equal to any invertible matrix, the left hand side of the equation can be taken equal to any element of $\spc{\cC}$, and from the lemma above, any matrix $H$ can be written as the product of an orthogonal projection and an invertible matrix for any matrix $H\in \R^{d\times d}$ and any vector $Y\in \Vect H$, the problem of finding the sparsest $S\in \R^d$ such that $HS=Y$ is an instance of \cref{problem: space coverage problem} when $\cC$ is relatively open and $\cQ$ is a basis of $\R^d$. This problem is well known to be NP-hard \citep{natarajan1995sparse}.
    
\end{proof}

\subsection{Modified version of \cref{algorithm to compute delta}} \label{appendix: modified alg to compute delta}
\cref{alg:modified-algorithm} is an alternative version of \cref{algorithm to compute delta} that returns the extreme points involved in computing a basis for $\spc{\dualpoly{\cX}{\cC}}$. The output of this algorithm is used to compute a nearly-smallest sufficient data set in \cref{alg: sufficient datatset for extreme points}.
 \begin{algorithm}[h] 
    \caption{Modified version of \cref{algorithm to compute delta}} \label{alg:modified-algorithm}
    \KwIn{Polyhedron $\cX = \{x \geq 0 \; : \; Ax=b\}$, Uncertainty set $\cC$.}
    \KwOut{A \sufficient\ $\cD$ such that $\abs{\cD}=r(\cX,\cC)+1$.}
    Initialize $\mathcal U$ and $\cD$ to $\varnothing$.
    
    Set $x^\star_0\in \arg\min_{x\in \cX}c_0^\top x$ an extreme point for some $c_0 \in \cC$.

    \quad $\mathcal U \leftarrow \mathcal U \cup \{x^\star_0\}$.
    
    Sample $\alpha \sim \mathcal{N}(0,Id)$.
    
    \textbf{while} either of the problems
    \begin{align*}
        \min&\;\alpha^\top \proj{(\Vect \cD)^\perp}{x_0-x} & \max&\;\alpha^\top \proj{(\Vect \cD)^\perp}{x_0-x}\\
        \text{s.t.}&\;x \geq 0,\; \lambda \in \R^m,\; s\in \R_+^d,\; c\in \cC& \text{s.t.} &\;x\geq 0,\; \lambda \in \R^m,\; s\in \R_+^d,\; c\in \cC \\
        &Ax=b, \; A^\top \lambda +s=c, & & Ax=b, \; A^\top \lambda +s=c \\
        & 1- \epsilon s_i \geq \tau_i \geq \epsilon x_i, \; \tau_i \in \{0,1\}, \; \forall i & & 1- \epsilon s_i \geq \tau_i \geq \epsilon x_i, \; \tau_i \in \{0,1\}, \; \forall i
    \end{align*}

    \quad has a solution with non-zero optimal value,

    \quad consider the corresponding optimal cost vector $c^\star$, and solve the following linear program using the simplex algorithm
    \begin{align*}
        \min&\;\alpha^\top \proj{(\Vect \cD)^\perp}{x_0-x} & \max&\;\alpha^\top \proj{(\Vect \cD)^\perp}{x_0-x}\\
        \text{s.t.}&\;x \geq 0,\; Ax=b,\;c^{\star\top}x\leq \min_{x'\in \cX}c^{\star\top}x' & \text{s.t.}&\;x \geq 0,\; Ax=b,\;c^{\star\top}x\leq \min_{x'\in \cX}c^{\star\top}x' 
    \end{align*}
    
    \quad Consider $x^\star$ the extreme points that is solution to any of the two problems above that has non-zero optimal value
    
    \quad $\mathcal U \leftarrow \mathcal U \cup \{x^\star\}$.

    \quad $\cD \leftarrow \cD \cup \{x_0^\star -x^\star\}$
    
    \quad resample $\alpha \sim \mathcal{N}(0,Id)$.

    \textbf{end while}

    \textbf{return} $\mathcal U$
\label{alg: modified algorithm to compute delta}
\end{algorithm}

\subsection{Justification of correctness of \cref{alg: sufficient datatset for extreme points}} \label{appendix: justification of algorithm for Q extreme points}
Since $\{x_0-x_1^\star,\dots,x_0-x_r^\star\}$ is basis of $\spc{\dualpoly{\cX}{\cC}}$, it is clear that for all $i\in \{1,\dots,r\}$, we have $\proj{\spc{\cC}}{x_i^\star - x_0}\in \proj{\spc{\cC}}{\spc{\dualpoly{\cX}{\cC}}}$ and $\Vect \cD_0 = \proj{\spc{\cC}}{\spc{\dualpoly{\cX}{\cC}}}$ where $\cD_0:= \{\proj{\spc{\cC}}{x_0-x_1^\star},\dots,\proj{\spc{\cC}}{x_0-x_r^\star}\}$. Hence, we can use gaussian elimination to extract from $\cD_0$ a basis $\cD_0':=\{\proj{\spc{\cC}}{x_0-x^\star_{i_1}},\dots,\proj{\spc{\cC}}{x_0-x^\star_{i_p}}\}$ of $\proj{\spc{\cC}}{\spc{\dualpoly{\cX}{\cC}}}$. This immediately implies that $\abs{\cD_0'}=\dim \proj{\spc{\cC}}{\spc{\dualpoly{\cX}{\cC}}}=r(\cX,\cC)$ and $\Vect \cD_0' = \proj{\spc{\cC}}{\spc{\dualpoly{\cX}{\cC}}}$. Finally, by considering $\cD=\{x_0,x_{i_1},\dots,x_{i_p}\}$, we clearly have $\proj{\spc{\cC}}{\spc{\dualpoly{\cX}{\cC}}}=\Vect \cD_0'\subset \Vect \proj{\spc{\cC}}{\Vect \cD}$, i.e. $\cD$ is a sufficient data set and $\abs{\cD}=\abs{\cD_0'}+1=r(\cX,\cC)+1$.
\subsection{Proof of Proposition \ref{prop: covspace vector space corectness}} \label{proof covspace vector space corectness}

Let $A:=\spc{\dualpoly{\cX}{\cC}}$ and $B:=\spc{\cC}^\perp$. Since we have $\proj{B^\perp}{A}\subset \proj{B^\perp}{\cQ}$, then any basis of $\proj{B^\perp}{A}$ is of cardinality $r:=r(\cX,\cC)$ and can be written as $\proj{B^\perp}{q_1},\dots \proj{B^\perp}{q_r}$ (as constructed by \cref{algorithm space coverage problem vector space}) where $q_1\dots,q_r\in \cQ$, consequently $\cD=\{q_1\dots,q_r\}$ is a smallest cardinality set verifying the desired property.
\subsection{Proof of \cref{cardinality suff data set relatively open set}}
\begin{proof}{Proof.}
From \cref{sufficient data set lowerbound}, we know that the cardinality of any \sufficient\ is lower bounded by $r(\cX,\cC)$, and the discussion above shows that the smallest cardinality of a sufficient data set is upper bounded by $r(\cX,\cC)+1$.
    
\end{proof}

\subsection{Proof of \cref{equivalent data set open}}
\begin{proof}{Proof. }
    Let $q_0\in \R^d$ and $V:=\spc{\cQ}$ a subspace of $\R^d$ such that $\aff{\cQ}=q_0+V$. We have $\Vect \cQ = \Vect (q_0+V)$ where $q_0\in \cQ$. Let $\cD$ a finite subspace of $\Vect \cQ$ containing an element of $\cQ$. Assume without loss of generality that this element is $q_0$. We denote without loss of generality $\cD:=\{q_0,\lambda_1q_0+v_1,\dots,\lambda_t q_0+v_t\}$ where $t\in \mathbb N$, $\lambda_1,\dots,\lambda_t\in \R$, $v_1,\dots,v_t\in V$. Let $M_1,\dots,M_t\in \R^\star$. Let $\cD'=\{q_0,q_0+\frac{1}{M_1}v_1,\dots,q_0+\frac{1}{M_t}v_t\}$. since $q_0\in \cQ$ and $\cQ$ is relatively open, then for $M_1,\dots,M_t$ large enough, we have $\cD'\subset \cQ$. Furthermore, we have  $\cD'\subset \cQ$, $\abs{\cD'}=\abs{\cD}$ and $\Vect \cD=\Vect \cD'$ since $\cD'$ can be obtained by applying invertible linear operations on $\cD$.
\end{proof}

\subsection{Proof of Proposition \ref{covspace vector space corectness relatively open set}} \label{appendix: relatively open alg correctness}
From \cref{cardinality suff data set relatively open set}, we know that the smallest cardinality of a sufficient data set is either $r(\cX,\cC)$ or $r(\cX,\cC)+1$. Assume that $\cQ \cap (\spc{\cC}^\perp +\spc{ \dualpoly{\cX}{\cC}})\setminus \spc{\cC}^\perp = \varnothing.$  We will show that the smallest cardinality of a sufficient data set in this case is $r(\cX,\cC)+1$. We denote $A:=\spc{\dualpoly{\cX}{\cC}}$ and $B=\spc{\cC}^\perp$. Let $\cD\subset \cQ$ such that $\abs{\cD}=r(\cX,\cC)$. Let $q\in \cD$. Let $a\in A\cap (B+\Vect \{q\})$. There exists $\lambda \in \R$ and $b\in B$ such that $a=b+\lambda q$. Assume that $\lambda \neq 0$. We have $q=\frac{1}{\lambda}a-\frac{1}{\lambda}b\in A+B$. Hence, we have $q\in \cQ \cap (A+B)$, consequently since $\cQ \cap (A+B)\setminus B=\varnothing$, then we have $q\in B$, and hence $a\in B$. If $\lambda =0$, then we also we have $a\in B$. Hence, we have $A\cap (B+\Vect \{q\})\subset A \cap B$, i.e. $A\cap (B+\Vect \{q\})= A \cap B$. Let $\cD_0=\cD \setminus \{q\}$. From Lemma \ref{dim intersection inequality}, we have
\begin{align*}
    \dim (A\cap (B+\Vect \cD)) &= \dim (A\cap (B+\Vect \{q\}+\Vect \cD_0))\\
    &\leq \dim (A\cap (B +\Vect \{q\})) + \dim \cD_0\\
    &=\dim (A\cap B) +\dim \cD_0\leq \dim (A\cap B) +r(\cX,\cC)-1\\
    &=\dim A -1.
\end{align*}
Hence, we have $A\not\subset B+\Vect \cD$, which contradicts our initial assumption. Consequently, the cardinality of a minimal \sufficient{} is indeed lower bounded by $1+r(\cX,\cC)$. Furthermore, since $A\subset B+\Vect \cQ$, Algorithm \ref{algorithm space coverage problem vector space} is guaranteed to provide a minimal set $\cD\subset \Vect \cQ$ such that $A\subset B+\Vect \cD$ satisfying $\abs{\cD}=r$. Let $q\in \cQ$. From Lemma \ref{equivalent data set open}, we can construct a data set $\cD'\subset \cQ$ such that $\Vect \cD'=\Vect (\cD \cup \{q\})$,  $\abs{\cD'}=\abs{\cD \cup \{q\}}=r+1$. From the analysis above, $\cD'$ is indeed a minimal \sufficient{}.

Assume now that $\cQ \cap (A+B)\setminus B\neq \varnothing$. Let $q\in \cQ \cap (A+B)\setminus B$. We first prove that $\dim (A\cap (B+\Vect \{q\}))=1+\dim A\cap B$. From Lemma \ref{dim intersection inequality}, we have $\dim (A\cap (B+\Vect \{q\}))\leq 1+\dim (A\cap B)$. Let $a\in A$, $b\in B$ such that $q=a+b$. We have $a=q-b \in A\cap (B+\Vect \{q\})$. Assume that $a\in A\cap B$. There exists $b'\in B$ such that $b'=a=q-b$. Hence, we have $q=b+b'\in B$ which is impossible. Hence, we have $A\cap (B+\Vect \{q\})\not\subset A\cap B$ and consequently $\dim (A\cap (B+\Vect \{q\}))\geq 1+\dim  A\cap B$. This indeed gives $\dim (A\cap (B+\Vect \{q\}))=1+\dim A\cap B$. Let $B'=B+\Vect \{q\}$. We have $\dim A- \dim A\cap B'=r(\cX,\cC)-1$ and $A\subset B' + \Vect \cQ$. Consequently, Algorithm \ref{algorithm space coverage problem vector space} provides a minimal set $\cD\subset \Vect \cQ$ such that $A\subset B' + \Vect \cD$ and $\abs{\cD}=r(\cX,\cC)-1$. Using Lemma \ref{equivalent data set open}, we can construct a data set $\cD'\subset \cQ$ such that $\Vect \cD'=\Vect (\cD \cup \{q\})$,  $\abs{\cD'}=\abs{\cD \cup \{q\}}=r(\cX,\cC)$. From the analysis above, $\cD'$ is indeed a minimal \sufficient{}.

\subsection{Proof of $\Vect \cQ=\Vect \mathrm{relint}~ (\cQ)$ when $\cQ$ is convex} \label{appendix: proof of span Q equals span relint Q}
Since $\relint \cQ \subset \cQ$, we have $\Vect \relint \cQ \subset \Vect \cQ$. Hence, we only need to prove the converse inclusion. Let $q_1,\dots,q_r\in \cQ$ vectors that span $\cQ$. let $q_0 \in \relint \cQ$ and $\varepsilon>0$ such that $\aff{\cQ}\cap B(q_0,\varepsilon)\subset \cQ$. We would like to show that for all $i\in \{1,\dots,r\}$, we have $\frac{1}{2}(q_0+q_i)\in \relint \cQ$. To do that, we show that $\aff{\cQ}\cap B(\frac{1}{2}(q_0+q_i),\frac{1}{2}\varepsilon)\subset \cQ$. let $q\in \aff{\cQ}\cap B(\frac{1}{2}(q_0+q_i),\frac{1}{2}\varepsilon)$, we have 
\begin{align*}
    q=\frac{1}{2}(q_0+q_i) + \left(q-\frac{1}{2}(q_0+q_i)\right)=\frac{1}{2}\underbrace{\left(q_0 + 2 \left(q-\frac{1}{2}(q_0+q_i)\right)\right)}_{\in B(q_0,\varepsilon)\cap \aff{\cQ}\subset \cQ} + \frac{1}{2}\underbrace{q_i}_{\in \cQ}\in \cQ.
\end{align*}
This proves the desired inclusion.

\section{Noisy measurements} \label{sec:noisy-measurements}
In this section, we would like to evaluate the effect of noise on our algorithm, i.e. how do our guarantees change when the evaluations of the queries of some \sufficient{} are observed with noise. In other words, given $\cD=\{q_1,\dots,q_r\}$ with $r\in \mathbb N$ a \sufficient{}, and $\ctrue\in \cC$, if we observe for all $i\in \mathbb N$ the real numbers $o_i:=\ctrue^\top q_i + \varepsilon_i$, where $\varepsilon_i \in \R$, can we quantify how sensitive is the notion of data sufficiency to measurement noise? We show in this section that our algorithm can obtain near optimality guarantees. Let $o=\begin{pmatrix}
    o_1 & \dots & o_r
\end{pmatrix}^\top$ and $Q$ a matrix whose rows are the elements of $\cD$, a \sufficient{}. We consider the following cost vector predictor
\begin{align*}
    \chat(o_1,\dots,o_r):=\arg\min_{c\in \cC}\norm{Qc-o}.\end{align*}

Given the cost vector prediction above, we would like to evaluate the optimality gap of the decision prescribed by a sufficient data set:
\begin{align*}
    \frac{\ctrue^\top \hat{x} - \min_{ x\in X}\ctrue^\top x}{\norm{\ctrue}\diam{\cX}}.
\end{align*}
where $\hat{x}\in \arg\min_{x\in \cX}\chat(o_1,\dots,o_r)^\top x$ and $\diam{\cX} = \sup_{x,y\in \cX}\norm{x-y}$. The quantity above is the optimality gap induced by our decision $\hat{x}$ normalized by the norm of the true cost vector and the size of $\cX$. This normalization is important because $\ctrue^\top \hat{x} - \min_{ x\in X}\ctrue^\top x$ scales with the norm of $\ctrue$ and with any multiplication of $\cX$ by a positive scalar. We show that our approach provides near optimality guarantees in the noisy case. In particular, when the noise $\varepsilon$ is small enough, we can prove that solving $\min_{x\in \cX}\chat(o_1,\dots,o_r)^\top x$ still recovers an exact solution to $\min_{x\in \cX}\ctrue^\top x$, and an approximate solution in general with error that scales with $\norm{\varepsilon}$. 

\begin{proposition} \label{prop:bound on noisy case}
    Given $\cC\subset \R^d$, $\ctrue \in \cC$ $D:=\{q_1,\dots,q_r\}$ a sufficient decision data set, $Q$ a matrix whose rows are the elements of $D$, $\varepsilon\in \R^d$, observations $o:=Q\ctrue + \varepsilon$,  $\hat{c}(o_1,\dots,o_r):=\arg\min_{c\in \cC}\norm{Qc-o}$, and $\hat{x}$ an element of $\arg\min_{x\in \cX}\hat{c}(o_1,\dots,o_r)^\top x$, there exists $\kappa>0$ such that
    \begin{align*}
          \frac{\ctrue^\top \hat{x} - \min_{ x\in X}\ctrue^\top x}{\norm{\ctrue}\diam{\cX}}\leq \begin{cases}
          0 & \text{ if }\norm{\varepsilon}<\kappa\\
              \frac{2}{\lambda_{\mathrm{min}}(D) \norm{\ctrue}}\norm{\varepsilon}& \text{ else, }
          \end{cases}
    \end{align*}
    where $\diam{\cX}$ is the diameter of $\cX$ and $\lambda_{\mathrm{min}}(D)$ is the smallest singular value of $Q$.
\end{proposition}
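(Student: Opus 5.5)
The plan is to reduce the noisy case to the noiseless characterization of \cref{cor:dualpoly-carac}. First I control how far the estimate $\hat c$ can be from $\ctrue$ along the observed directions $F:=\Vect D$. Then I convert this into an optimality gap, using the fact that only directions in $\dir{\dualpoly{\cX}{\cC}}$ matter. Throughout I assume that the minimum defining $\hat c$ is attained in $\cC$, for instance because $\cC$ is closed or the problem is restricted to a compact slice. In particular $\hat c\in\cC$.

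\emph{Step 1 (estimation error along $F$).} Since $\ctrue\in\cC$ is feasible for the least-squares problem, $\norm{Q\hat c-o}\le\norm{Q\ctrue-o}=\norm{\varepsilon}$. The triangle inequality then gives $\norm{Q(\hat c-\ctrue)}\le 2\norm{\varepsilon}$. The kernel of $Q$ is $F^\perp$, so $Q(\hat c-\ctrue)=Q\,(\hat c-\ctrue)_F$. Interpreting $\lambda_{\min}(D)$ as the smallest singular value of $Q$ restricted to $F$, this yields
\[
\norm{(\hat c-\ctrue)_F}\le \frac{2\norm{\varepsilon}}{\lambda_{\min}(D)}.
\]

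\emph{Step 2 (from cost error to decision gap).} Let $x^\star\in\arg\min_{x\in\cX}\ctrue^\top x$, chosen to be an extreme point. Because $\hat c^\top(\hat x-x^\star)\le 0$, we have $\ctrue^\top\hat x-\ctrue^\top x^\star\le(\ctrue-\hat c)^\top(\hat x-x^\star)$. Both $\hat x$ and $x^\star$ lie in $\dualpoly{\cX}{\cC}$, since $\hat c,\ctrue\in\cC$. Hence $\hat x-x^\star\in\dir{\dualpoly{\cX}{\cC}}$. By sufficiency and \cref{cor:dualpoly-carac}, this space is orthogonal to $\dir{\cC}\cap F^\perp$; this is the reformulation used in the proof of \cref{thm: relatively open characterization}.

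Now $\ctrue-\hat c\in\dir{\cC}$. I decompose it as $g+h$ with $h\in\dir{\cC}\cap F^\perp$, so the $h$-part contributes nothing. It remains to bound $g^\top(\hat x-x^\star)$ by a constant multiple of $\norm{(\ctrue-\hat c)_F}\,\diam{\cX}$.

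This is the step I expect to be the main obstacle. The component $g$ is the part of $\ctrue-\hat c$ in $\dir{\cC}\ominus(\dir{\cC}\cap F^\perp)$, and it is not literally $(\ctrue-\hat c)_F$. I would first try to show that on this complement the map $v\mapsto v_F$ is injective with a bounded inverse. Its norm gives a geometric constant, which can be absorbed into $\lambda_{\min}(D)$ or stated explicitly. With that, Cauchy--Schwarz and $\norm{\hat x-x^\star}\le\diam{\cX}$ give
\[
\ctrue^\top\hat x-\min_{x\in\cX}\ctrue^\top x\le \frac{2\norm{\varepsilon}}{\lambda_{\min}(D)}\,\diam{\cX}.
\]
Dividing by $\norm{\ctrue}\diam{\cX}$ yields the second case of the stated bound. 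In the open case $\dir{\cC}=\R^d$ we have $g=(\ctrue-\hat c)_F$ exactly, so there is no extra constant.

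\emph{Step 3 (exact recovery for small noise).} $\hat x$ can be taken to be an extreme point, and $\cX^\angle$ is finite. Therefore the set of nonzero values of $\ctrue^\top x-\min_{\cX}\ctrue^\top x$ over $x\in\cX^\angle$ has a positive minimum $\gamma$. Choose $\kappa>0$ such that the Step 2 bound evaluated at $\norm{\varepsilon}=\kappa$ is smaller than $\gamma$, for example $\kappa=\gamma\lambda_{\min}(D)/(2\diam{\cX})$. Then for $\norm{\varepsilon}<\kappa$ the gap of the extreme point $\hat x$ must be zero. This gives the first case of the bound.
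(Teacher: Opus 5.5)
Your Steps 1 and 2 follow the paper's proof. The obstacle you flag in Step 2 is genuine, and it cannot be argued away: with the constant $2/\lambda_{\mathrm{min}}(D)$ the statement is false as soon as $\cC$ is relatively open but not open.

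\textbf{Counterexample.} Take $\cX=\{x\in\R^2:\ x\ge 0,\ x_1+x_2=1\}$, $\cC=\{(s,0):\ -1<s<1\}$, and $D=\{q\}$ with $q=(1,10)/\sqrt{101}$, so $Q=q^\top$ has singular value $1$. $D$ is sufficient, since $q^\top c=s/\sqrt{101}$ determines $c\in\cC$. Let $\ctrue=(\delta,0)$ with $0<\delta<1$ (unique minimizer $e_2$), and $\varepsilon=-2\delta/\sqrt{101}$.
\begin{itemize}
\item The estimate $\chat=(-\delta,0)\in\cC$ fits the observation exactly, so $\hat x=e_1$.
\item The normalized gap is $1/\sqrt2$.
\item The claimed bound is $2\norm{\varepsilon}/(\lambda_{\mathrm{min}}(D)\norm{\ctrue})=4/\sqrt{101}<1/\sqrt2$.
\end{itemize}
Neither branch holds, whatever $\kappa$ is. The query is nearly orthogonal to $\dir{\cC}$, so observation noise is amplified by your $K=\sqrt{101}$ along the only uncertain direction, and $\lambda_{\mathrm{min}}(D)$ does not see this.

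\textbf{What to state instead.} Do not ``absorb'' $K$ into $\lambda_{\mathrm{min}}(D)$; state the corrected bound. There is a cleaner route to it. With $F=\Vect D$, $G=\dir{\cC}\ominus(\dir{\cC}\cap F^\perp)$ and $\ctrue-\chat=g+h$:
\begin{itemize}
\item you have $Qh=0$, so $\norm{Qg}\le 2\norm{\varepsilon}$;
\item $Q$ is injective on $G$, so $\norm{g}\le 2\norm{\varepsilon}/\sigma_G$, where $\sigma_G$ is the smallest singular value of $Q$ restricted to $G$;
\item this constant is at least as good as yours, since $\sigma_G\ge\lambda_{\mathrm{min}}(D)/K$.
\end{itemize}
When $\dir{\cC}=\R^d$, one has $g=(\ctrue-\chat)_F$ and the stated constant is recovered exactly.

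\textbf{Comparison with the paper's Step 2.} The paper performs the same decomposition but simply asserts that $(\ctrue-\chat)_{(\Vect D)^\perp}\in\dir{\cC}\cap(\Vect D)^\perp$. This holds when $\dir{\cC}=\R^d$, or more generally when $\dir{\cC}$ is invariant under projection onto $\Vect D$. It fails in the example above. Your Step 2, with the constant made explicit, is the correct form of the argument, and for open $\cC$ it coincides with the paper's.

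\textbf{Step 3.} Here you take a different route from the paper. The paper proves a separate lemma, assuming $\cC$ compact: every $c'\in\cC$ with $c'_{\Vect D}$ close enough to $(\ctrue)_{\Vect D}$ satisfies $\arg\min_{x\in\cX}c'^\top x\subset\arg\min_{x\in\cX}\ctrue^\top x$. It then feeds Step 1's bound into this lemma.
\begin{itemize}
\item The paper's route uses only sufficiency, not the orthogonality from Step 2. But it gives no explicit $\kappa$, and it needs compactness. Compactness sits badly with the relative openness Step 2 requires: a nonempty compact, relatively open convex set is a point.
\item Your finite-gap argument avoids compactness and yields the explicit $\kappa=\gamma\,\sigma_G/(2\diam{\cX})$. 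The price is that it inherits Step 2's hypotheses.
\end{itemize}

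Two small repairs are needed.
\begin{itemize}
\item The statement lets $\hat x$ be any minimizer of $\chat$, not necessarily an extreme point. Note that such an $\hat x$ is a convex combination of extreme minimizers of $\chat$. Each of these satisfies the Step 2 bound and hence has zero gap when $\norm{\varepsilon}<\kappa$, so $\hat x$ does too.
\item You invoke \cref{cor:dualpoly-carac} to get $\dir{\dualpoly{\cX}{\cC}}\perp\dir{\cC}\cap F^\perp$ from sufficiency. That requires $\cC$ to be relatively open and convex, which the statement (like the paper's proof) leaves implicit; state it as a hypothesis.
\end{itemize}
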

Here, $\lambda_{\mathrm{min}}(D)$ quantify how much redundancy there is in the data set: if all the data points in a data set are nearly collinear, $\lambda_{\mathrm{min}}(D)$ will be small, and hence the error will be large. This is intuitive because for a fixed value of the noise $\varepsilon$, every additional data point's additional information is inversely proportional to how redundant it is with the rest of the data set (i.e. $\lambda_{\mathrm{min}}(D)$) and the noise level.

\section{Proofs for \cref{sec:noisy-measurements}}

\subsection{Proof of Proposition \ref{prop:bound on noisy case}}
\begin{proof}{Proof. }
    Let $\eta := Q\hat{c}(o_1,\dots,o_r) - Q \ctrue$. Since $\hat{c}(o_1,\dots,o_r)=\arg\min_{c\in \cC}\norm{Qc-o}$, then we have $\norm{Q\hat{c}(o_1,\dots,o_r) -Q\ctrue - \varepsilon}\leq \norm{\varepsilon}$. Hence, we have  $\norm{Q\hat{c}(o_1,\dots,o_r) -Q\ctrue}\leq 2\norm{\varepsilon}$ and consequently $\norm{\eta}\leq 2 \norm{\varepsilon}$. 
    We would like to show that the distance between the projections of $\ctrue$ and $\hat{c}(o_1,\dots,o_r)$ in the span of $\cD$ is upper bounded by $O(\norm{\varepsilon})$. Consider $\alpha_{\text{true}},\hat{\alpha}\in \R^r$ such that $\hat{c}(o_1,\dots,o_r)_{\Vect \cD}=Q^\top \hat{\alpha}$ and $c_{\text{true},\Vect \cD}=Q^\top \alpha_{\text{true}}$. We have 
    \begin{align*}
        Q\hat{c}(o_1,\dots,o_r)-Q\ctrue=\eta &\implies Q(\hat{c}(o_1,\dots,o_r)_{\Vect \cD - c_{\text{true},\Vect \cD}})\\
        &\implies QQ^\top (\hat{\alpha}-\alpha_{\text{true}})=\eta\\
        &\implies \hat{\alpha}-\alpha_{\text{true}}=(QQ^\top)^{-1}\eta\\
        &\implies \hat{c}(o_1,\dots,o_r)_{\Vect \cD}-c_{\text{true},\Vect \cD}=Q^\top (QQ^\top)^{-1}\eta.
    \end{align*}
    Let $U\in \R^{r\times r}$, $V\in \R^{d\times d}$ and $\Sigma\in \R^{r\times d}$ such that $U,V$ are orthogonal matrices and for all $(i,j)\in [r]\times [d]$
    \begin{align*}
        \Sigma_{ij}=\begin{cases}\text{$\sigma_i$ the $i-$th singular value of $Q$} & \text{if }i=j\\ 0 &\text{else,}\end{cases}
    \end{align*}
    and $Q=U\Sigma V^\top$. We have 
    \begin{align*}
        Q^\top (QQ^\top)^{-1}&=U\Sigma V^\top (U\Sigma V^\top V \Sigma^\top U^\top)^{-1}\\
        &=U\Sigma V^\top (U\Sigma \Sigma^\top U^\top)^{-1}\\
        &=V\Sigma^\top  U^\top U(\Sigma \Sigma^\top)^{-1} U^\top\\
        &=V\Sigma^\top (\Sigma \Sigma^\top)^{-1} U^\top=V\Sigma'U^\top,
    \end{align*}
    where $\Sigma'\in \R^{d\times r}$ satisfies
    \begin{align*}
        \Sigma'_{ij}=\begin{cases}\text{$\frac{1}{\sigma_i}$ the $i-$th singular value of $Q$} & \text{if }i=j\\ 0 &\text{else.}\end{cases}
    \end{align*}
    Let $\lambda_{\mathrm{min}}(D)$ the smallest singular value of $\cQ$. The calculations above gives, when $\norm{.}$ is the $L^2$ norm,
    \begin{align*}
        \norm{c_{\text{true},\Vect \cD}-\hat{c}(o_1,\dots,o_r)_{\Vect \cD}}=\norm{Q^\top (QQ^\top)^{-1}\eta}\leq \norm{Q^\top (QQ^\top)^{-1}}\cdot \norm{\eta}\leq \frac{2}{\lambda_{\mathrm{min}}(D)}\norm{\varepsilon}.
    \end{align*}
    
    Furthermore, since $\cD$ is a \sufficient{} we have from Theorem \ref{thm: relatively open characterization} that $ \spc{\dualpoly{\cX}{\cC}}\perp \spc{\cC}\cap (\Vect \cD)$ and $\ctrue,\hat{c}(o_1,\dots,o_r)\in \cC$, then we have for any $\xtrue\in \arg\min_{x\in \cX}\ctrue^\top x$ and $\xhat \in \arg\min_{x\in \cX}\hat{c}(o_1,\dots,o_r)^\top x$
    \begin{align*}
        \ctrue^\top (\xhat - \xtrue)&=(\ctrue-\hat{c}(o_1,\dots,o_r))^\top(\xhat - \xtrue) + \underbrace{\hat{c}(o_1,\dots,o_r)^\top(\xhat - \xtrue)}_{\leq 0}\\
        &\leq (c_{\text{true},\Vect \cD}-\hat{c}(o_1,\dots,o_r)_{\Vect \cD})^\top(\xhat - \xtrue) \\&
        +\underbrace{(c_{\text{true},(\Vect \cD)^\perp}-\hat{c}(o_1,\dots,o_r)_{(\Vect \cD)^\perp}}_{\in \spc{\cC}\cap (\Vect \cD)^\perp})^\top\underbrace{(\xhat - \xtrue)}_{\in \spc{\dualpoly{\cX}{\cC}}}\\
        &\leq \norm{c_{\text{true},\Vect \cD}-\hat{c}(o_1,\dots,o_r)_{\Vect \cD}}\cdot \norm{\xhat - \xtrue}\\
        &\leq \frac{2}{\lambda_{\mathrm{min}}(D)}\norm{\varepsilon} \times \diam{\cX}.
    \end{align*}
    Finally, this implies
    \begin{align*}
          \frac{\ctrue^\top \hat{x} - \min_{ x\in X}\ctrue^\top x}{\norm{\ctrue}\diam{\cX}}\leq \frac{2}{\lambda_{\mathrm{min}}(Q) \norm{\ctrue}}\norm{\varepsilon}.
    \end{align*}
    We now prove that for any $\ctrue \in \cC$, when $\norm{\varepsilon}$ is small enough, we have 
    \begin{align*}
        \frac{\ctrue^\top \hat{x} - \min_{ x\in X}\ctrue^\top x}{\norm{\ctrue}\diam{\cX}}=0.
    \end{align*}
    In order to do that, we prove the following lemma.
    \begin{lemma}
        Assume that $\cC$ is compact. Let $\cD$ a \sufficient{} for $\cC\subset \R^d$. Let $c\in \cC$. There exists $\mu>0$ such that for any $c'\in \cC$ such that $\norm{c_{\Vect \cD}-c'_{\Vect \cD}}<\mu$, we have $\arg\min_{x\in \cX}c'^\top x \subset \arg\min_{x\in \cX}c^\top x$.
    \end{lemma}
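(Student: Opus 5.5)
We argue by contradiction, combining compactness of $\cC$ with finiteness of the set of extreme points and the closedness of the optimality cones $\Lambda(x^\star)$. Suppose no such $\mu$ exists. Then there is a sequence $(c'_n)_{n\in\integ}\subset \cC$ with $\norm{c_{\Vect \cD}-c'_{n,\Vect \cD}}\to 0$ and $\arg\min_{x\in \cX}c_n'^\top x\not\subset \arg\min_{x\in \cX}c^\top x$ for every $n$.

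The first step reduces this failure to extreme points. Since $\arg\min_{x\in\cX} c_n'^\top x$ is a face of $\cX$, it equals the convex hull of the extreme points it contains (as in step (4) of the proof of \cref{thm:span delta is dir x}). The set $\arg\min_{x\in\cX}c^\top x$ is also convex. So if every extreme point of the first set lay in the second, the whole first set would lie in the second. Hence for each $n$ there is $x_n\in\cX^\angle$ with $c'_n\in\Lambda(x_n)$ and $x_n\notin\arg\min_{x\in\cX}c^\top x$. As $\cX^\angle$ is finite, we pass to a subsequence on which $x_n\equiv \bar x$ is constant.

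Next we use compactness of $\cC$ to extract a further subsequence $c'_n\to \bar c\in\cC$. Projection is continuous, so $\bar c_{\Vect\cD}=\lim_n c'_{n,\Vect\cD}=c_{\Vect\cD}$. Since $\Lambda(\bar x)$ is closed (\cref{optimality cone def}: it is an intersection of closed half-spaces), $\bar c\in\Lambda(\bar x)$, that is, $\bar x\in\arg\min_{x\in\cX}\bar c^\top x$.

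Finally, $c,\bar c\in\cC$ have the same projection on $\Vect\cD$, and $\cD$ is a \sufficient. By \cref{prop:sufficient:projections}, $\arg\min_{x\in\cX}\bar c^\top x=\arg\min_{x\in\cX}c^\top x$. Therefore $\bar x\in\arg\min_{x\in\cX}c^\top x$, which contradicts the choice of $x_n$.

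The only delicate point is the reduction to extreme points in the first step. The remaining steps mirror the proof of \cref{pseudo continuity of argmin}, with compactness of $\cC$ supplying the limit point $\bar c$ that sufficiency needs. That limit is required because convergence is assumed only for the projections on $\Vect\cD$, not for the full vectors $c'_n$.
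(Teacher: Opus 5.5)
Your proof is correct and follows the paper's argument essentially step for step. Both argue by contradiction, fix a single extreme point along a subsequence using finiteness of $\cX^\angle$, extract a limit $\bar c\in\cC$ by compactness, use closedness of $\Lambda(\bar x)$, and then invoke sufficiency via \cref{prop:sufficient:projections} to force $\bar x\in\arg\min_{x\in\cX}c^\top x$. Your explicit reduction to extreme points (each $\arg\min$ is the convex hull of its vertices, which are vertices of $\cX$) fills in a step that the paper simply asserts.
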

    \begin{proof}{Proof. }
        Assume that the result does not hold, i.e. there exists a sequence $c'_n\in \cC$ such that $c'_{n,\Vect D}$ converges to $c_{\Vect D}$, and for all $n\in \mathbb N$, there exists $x'\in \cX^\angle$ such that $x'\in \arg\min_{x\in \cX}c'^\top_n x \setminus \arg\min_{x\in \cX}c^\top x $. Since the number of extreme points in $\cX$ are finite, there exists $x'\in \cX^\angle$ and a strictly increasing map $\varphi:\mathbb N \longmapsto \mathbb N$ such that for all $n\in \mathbb N$, we have $x'\in \arg\min_{x\in \cX}c'^\top_{\varphi(n)} x \setminus \arg\min_{x\in \cX}c^\top x $. Since $\cC$ is compact, we can assume without loss of generality that the sequence $c'_{\varphi(n)}$ is convergent to some $c'\in \cC$ (it suffices to extract another time a converging sequence from $c'_{\varphi(n)}$). Consequently, since for all $n\in \mathbb N$, $c'_{\varphi(n)}\in \Lambda(x')$, and $\Lambda(x')$ is closed, then $c'\in \Lambda(x')$. Furthermore, we have $c\not\in \Lambda(x)$, and $c_{\Vect \cD}=c'_{\Vect \cD}$, which means that $\arg\min_{x\in \cX}c^\top x=\arg\min_{x\in \cX}c'^\top x$. This implies that $x'\in \arg\min_{x\in \cX}c^\top x$, i.e. $c\in \Lambda(x)$ which is impossible.
    \end{proof}
    
    Hence, when $\norm{\varepsilon}\leq \frac{\mu \lambda_{\mathrm{min}}(D)}{2}$, we have 
    \begin{align*}
        \norm{c_{\text{true},\Vect \cD}-\hat{c}(o_1,\dots,o_r)_{\Vect \cD}}<\mu,
    \end{align*}
    i.e. $\arg\min_{x\in \cX}\hat{c}(o_1,\dots,o_r)^\top \subset \arg\min_{x\in \cX}\ctrue^\top x$. This gives 
     \begin{align*}
        \frac{\ctrue^\top \hat{x} - \min_{ x\in X}\ctrue^\top x}{\norm{\ctrue}\diam{\cX}}=0.
    \end{align*}
\end{proof}

 \section{Some useful lemmas}

  \begin{lemma} \label{cone equivalence}
            Assume that $\cX$ is bounded. For every $c\in \R^d$, $\arg\min_{x\in \cX}c^\top x$ is a polyhedron, and all of its extreme points are extreme points in $\cX$. Recall the optimality cones $\Lambda(x^\star)$ of all $x^\star \in \cX^\angle$ defined in \cref{optimality cone def}.  For every $c,c'\in \R^d$, the following equivalence holds.
            \begin{align*}
                \arg\min_{x\in \cX}c^\top x=\arg\min_{x\in \cX}c'^\top x \Longleftrightarrow \forall x \in \cX^\angle,\; \left(c\in \Lambda (x)\Longleftrightarrow c'\in \Lambda(x)\right).
            \end{align*}
        \end{lemma}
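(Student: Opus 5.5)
The plan is to prove both claims of \cref{cone equivalence} from the face structure of $\cX$ and the cone description of \cref{optimality cone def}.

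\emph{First claim.} Set $m_c:=\min_{x\in\cX}c^\top x$, which is attained because $\cX$ is a nonempty bounded polyhedron. Then
\[
\arg\min_{x\in\cX}c^\top x=\cX\cap\{x:\ c^\top x\le m_c\}.
\]
This is an intersection of finitely many linear inequalities and equalities, so it is a polyhedron, and it is bounded.

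Call this set $S$. It is a face of $\cX$: if $x\in S$ and $x=\lambda y+(1-\lambda)z$ with $y,z\in\cX$ and $\lambda\in(0,1)$, then $c^\top y\ge m_c$, $c^\top z\ge m_c$ and their convex combination equals $m_c$. Hence $c^\top y=c^\top z=m_c$, so $y,z\in S$.

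Now let $x$ be an extreme point of $S$. Suppose $x=\lambda y+(1-\lambda)z$ with $y\neq z$ in $\cX$. The face property puts $y,z\in S$, contradicting extremality of $x$ in $S$. Therefore $x\in\cX^\angle$.

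\emph{Direction $(\Rightarrow)$.} By \cref{optimality cone def}, for $x\in\cX^\angle$ we have $c\in\Lambda(x)$ iff $x\in\arg\min_{y\in\cX} c^\top y$. If the two argmin sets coincide, this condition holds for $c$ exactly when it holds for $c'$.

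\emph{Direction $(\Leftarrow)$.} The hypothesis gives
\[
\cX^\angle\cap\arg\min_{x\in\cX} c^\top x=\cX^\angle\cap\arg\min_{x\in\cX} c'^\top x .
\]
It remains to see that a bounded polyhedron $S$ is the convex hull of its extreme points (Minkowski/Krein--Milman for polytopes). By the first claim, these extreme points all lie in $\cX^\angle$.

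Every extreme point of $\cX$ lying in $S$ is also an extreme point of $S$, since $S\subset\cX$. So the extreme points of $S$ are exactly $\cX^\angle\cap S$, and hence
\[
\arg\min_{x\in\cX} c^\top x=\conv\Bigl(\cX^\angle\cap\arg\min_{x\in\cX} c^\top x\Bigr).
\]
The same identity holds for $c'$. Since the two sets of extreme points agree, the two argmin sets are equal.

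The only step needing care is the face argument that extreme points of the argmin set are extreme in $\cX$; the rest follows from standard polytope theory.
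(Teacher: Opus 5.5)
Your proof is correct and follows essentially the same route as the paper: show that extreme points of the optimal set lie in $\cX^\angle$, use that for $x\in\cX^\angle$ one has $c\in\Lambda(x)$ iff $x$ is optimal for $c$, and conclude because the (bounded) optimal set is the convex hull of its extreme points. The differences are cosmetic. You phrase the first step through the face property of the optimal set, whereas the paper perturbs to $x\pm u$ and splits on whether $c^\top u=0$. You also state explicitly the converse that every point of $\cX^\angle$ in the argmin is extreme in it, which the paper uses implicitly.
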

        
        \begin{proof}{Proof. }
            We first show that for any $c\in \R^d$, any extreme point in $\arg\min_{x\in \cX}c^\top x$ is in $\cX^\angle$. Let $x\in \cX$ be an extreme point of $\arg\min_{x\in \cX}c^\top x$. Assume that $x$ is not an extreme point in $\cX$. Hence, there exists $u\in \R^d$ such that $x\pm u\in \cX$. If $c^\top u\neq 0$ we have $c^\top (x-u)<c^\top x$ or $c^\top (x+u)<c^\top x$. This means that $x\not\in \arg\min_{x\in \cX}c^\top x$ which is impossible.
            If $c^\top u=0$, then $c\pm u\in \arg\min_{x\in \cX}c^\top x$, which is also impossible since $x$ is an extreme point in $\arg\min_{x\in \cX}c^\top x$. Hence, since $\arg\min_{x\in \cX}c^\top x$ is convex, it's the convex hull of its extreme points.

            Consequently, for any $c,c'\in \R^d$, $\arg\min_{x\in \cX}c^\top x=\arg\min_{x\in \cX}c'^\top x$ if and only if these two sets have the same set of extreme points. Furthermore, for any $x\in \cX^\angle$, we have $c\in \Lambda(x)$ if on and only if $x\in \arg\min_{x\in \cX}c^\top x$. Hence, the desired result immediately follows:
            \begin{align*}
                \arg\min_{x\in \cX}c^\top x=\arg\min_{x\in \cX}c'^\top x&\Longleftrightarrow \left(\forall x\in \cX^\angle,\; x\in \arg\min_{x\in \cX}c^\top x \Longleftrightarrow x\in \arg\min_{x\in \cX}c'^\top x \right)\\
                & \Longleftrightarrow \left(\forall x \in \cX^\angle,\; c\in \Lambda(x)\Longleftrightarrow c'\in \Lambda(x)\right)
            \end{align*}
        \end{proof}
\begin{lemma} \label{lemma:feasible directions in general polyhedron}
            Let $r=\dim F_0 \cap \Ker A$. For any $x\in \cX$, there exists a set $V=\{\delta_1,\dots,\delta_r\}\subset \FD(x)$, such that $V$ is a basis of $F_0 \cap \Ker A$. In particular, the set of extreme directions of $\FD(x)$ spans $F_0\cap \Ker A$.
        \end{lemma}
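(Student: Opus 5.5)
The plan is to work through a single well-chosen interior point of $\cX$. Feasible directions from any $x\in\cX$ toward points of $\cX$ will then be enough to span $F_0\cap \Ker A$. Let $I_0=\{i\in[d],\;\exists x\in\cX,\;x_i\neq 0\}$, so that $F_0=\Vect\{e_i,\;i\in I_0\}$.

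First I would construct a point $\bar x\in\cX$ with $\bar x_i>0$ for every $i\in I_0$. For each $i\in I_0$ pick $x^{(i)}\in\cX$ with $x^{(i)}_i>0$; this is possible because $x\ge 0$ on $\cX$. Then take $\bar x$ to be the average of the $x^{(i)}$. Convexity gives $\bar x\in\cX$, and nonnegativity of all coordinates gives $\bar x_i>0$ on $I_0$.

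Next, fix a basis $b_1,\dots,b_r$ of $F_0\cap\Ker A$. For each $j$ and small $t>0$, the point $\bar x+t b_j$ lies in $\cX$:
\begin{itemize}
\item $A b_j=0$, so the equality constraints still hold;
\item $(b_j)_i=0$ for $i\notin I_0$, so those coordinates stay at $0$;
\item on $I_0$, positivity of $\bar x$ survives a small perturbation.
\end{itemize}
Now fix an arbitrary $x\in\cX$. By convexity, $x+\varepsilon(y-x)\in\cX$ for all $y\in\cX$ and $\varepsilon\in[0,1]$, so every $y-x$ with $y\in\cX$ is in $\FD(x)$. In particular $\delta_0:=\bar x-x$ and $\delta_j:=\bar x+tb_j-x$ are feasible directions.

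These directions span the target space. Since $b_j=(\delta_j-\delta_0)/t$, the set $\{\delta_0,\dots,\delta_r\}$ spans a space containing $F_0\cap\Ker A$. Conversely, every $\delta_j$ is a difference of points of $\cX$, so $A\delta_j=0$ and $\delta_j$ vanishes off $I_0$; hence $\delta_j\in F_0\cap\Ker A$. This is the same argument as in \cref{feasible directions polyhedral cone}, and it does not need $x$ to be extreme. So the $r+1$ vectors span exactly $F_0\cap\Ker A$, and Gaussian elimination extracts a basis $V\subset\FD(x)$ of size $r$.

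For the ``in particular'' part, $\FD(x)$ is a polyhedral cone contained in $F_0\cap\Ker A$ that contains $V$, so $\Vect \FD(x)=F_0\cap\Ker A$. When $x\in\cX^\angle$, which is the setting in which $D(x)$ was defined, the cone $\FD(x)$ is pointed: if $\pm\delta\in\FD(x)$ with $\delta\neq 0$, then $x$ is the midpoint of $x\pm\varepsilon\delta\in\cX$, contradicting extremality. By Minkowski--Weyl, a pointed polyhedral cone is the conic hull of its extreme rays. Hence $\Vect D(x)=\Vect\FD(x)=F_0\cap\Ker A$.

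The main obstacle is this last step. It relies on identifying ``extreme directions'' with generators of extreme rays and on pointedness, and that only holds for extreme points $x$. If the statement is meant for general $x$, I would interpret $D(x)$ as a minimal generating set of the cone, modulo its lineality space, and add a basis of that lineality space. The span conclusion then still follows.
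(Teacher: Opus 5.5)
Your proof is correct and follows essentially the paper's route. Both construct $\bar x\in\cX$ with $\bar x_i>0$ for all $i\in I_0$ (the paper uses a strictly positive combination of all extreme points, you average one witness per coordinate), perturb it along a basis of $F_0\cap\Ker A$, and take the differences with $x$ as feasible directions.

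The differences are minor. You extract a basis from the $r+1$ spanning vectors $\bar x-x,\ \bar x+tb_j-x$, whereas the paper chooses $\eta\neq-\sum_i\beta_i$ so that the $r$ vectors $\bar x+\eta v_i-x$ are independent outright. You also justify the ``in particular'' clause via pointedness of $\FD(x)$ and Minkowski--Weyl, which the paper leaves implicit. As you correctly note, that clause is only meaningful for $x\in\cX^\angle$.
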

\begin{proof}{Proof. }
            Let $x\in \cX$. Let $\cX^\angle$ be the set of extreme points of $\cX$, and $D^\angle$ be the set of extreme rays of $\cX$. For every $x^\angle \in \cX^\angle$ and $\delta^\angle \in D^\angle$, we have $x^\angle \geq 0$ and $\delta^\angle \geq 0$. Let $\{\alpha_{x^\angle}\}_{x^\angle \in \cX^\angle}\subset \R^*_+$ and $\{\alpha_{\delta^\angle}\}_{\delta^\angle \in D^\angle}\subset \R^*_+$ be a set of strictly positive numbers such that $\sum_{x^\angle \in \cX^\angle}^{}\alpha_{x^\angle}=1$. We define 
            \begin{align*}
                \overline{x}=\sum_{x^\angle \in \cX^\angle}^{}\alpha_{x^\angle}x^\angle + \sum_{\delta^\angle \in D^\angle}^{}\alpha_{\delta^\angle}\delta^\angle\in \cX.
            \end{align*}

        We have for any $i\in I_0$, $\overline{x}_i>0$. Indeed, for any $i\in I_0$, if $\overline{x}_i=0$, then for every $x^\angle \in \cX^\angle$ and $\delta^\angle \in D^\angle$, $x_i^\angle=\delta_i^\angle =0$ and hence for any $x'\in \cX$, $x'_i=0$ i.e. $i\not \in I_0$ which is impossible. Let $$\varepsilon:=\frac{1}{2}\min_{i\in I_0}\overline{x}_i>0,$$ for any $\delta \in F_0 \cap \Ker A$ such that $\norm{\delta}<\varepsilon$, we have $A(\overline{x}+\delta)=A\overline{x}=b$, and for every $i\in [d]$, if $i\in I_0$, then $\overline{x}_i + \delta _i >0$ and if $i\not\in I_0$, $\overline{x}_i=\delta_i=0$ and consequently $x+\delta \geq 0$, i.e. $\delta$ is a feasible direction for $\overline{x}$. Hence, every element of $B(0,\varepsilon)\cap F_0 \cap \Ker A$ is a feasible direction for $\overline{x}$, and consequently any element of $F_0 \cap \Ker A$. Let $x\in \cX$, and $v_1,\dots, v_r$ a basis of $F_0\cap \Ker A$ such that for every $i\in [r],\; \norm{v_i}=1$. Let $\eta \in \R^*_+$ small enough such that $\forall i \in [r]$, $\overline{x} + \eta \delta _i \in \cX$. We would like to show that for a well-chosen value of $\eta$, the following set of feasible directions for $x$, $\{\overline{x}+\eta v_1 - x,\dots,\overline{x}+\eta v_r - x,\}$ is a basis of $F_0 \cap \Ker A$. Since $\overline{x}-x\in F_0 \cap \Ker A$, we consider $\beta_1,\dots,\beta_r\in \R$ such that 
        \begin{align*}
            \overline{x}-x=\sum_{i=1}^{r}\beta_i v_i.
        \end{align*}
        Let $\alpha_1,\dots,\alpha_r\in \R$. We have 
        \begin{align*}
            \sum_{i=1}^{r}\alpha_i(\overline{x}+\eta v_i-x)=0 &\Longrightarrow \underbrace{\left(\sum_{i=1}^{r}\alpha_i\right)}_{:=A}(\overline{x}-x)+\eta \sum_{i=1}^{r}\alpha_i v_i=0\\
            &\Longrightarrow \sum_{i=1}^{r}(A\beta_i + \eta \alpha_i)v_i=0\\
            &\Longrightarrow \forall i \in [r],\; A\beta_i + \eta \alpha_i=0\\
            &\Longrightarrow \underbrace{ \forall i\in [r], \alpha_i=0 \text{ or }\left( A\neq 0 \text{ and } \forall i \in [r],\;\frac{\alpha_i}{A}=-\frac{\beta_i}{\eta}\right)}_{(*)}.
        \end{align*}
        By summing over $i$ in the last equality above, we get
        \begin{align*}
            (*)&\Longrightarrow  \forall i\in [r], \alpha_i=0 \text{ or }\left( A\neq 0 \text{ and } 1=-\frac{1}{\eta}\smash{\underbrace{\sum_{i=1}^{r}\beta_i}_{B}} \right)
        \end{align*}
        
        \vspace{5mm}
        
        The equality above is equivalent to $\eta = -B$. Hence, it suffices to take $\eta\neq -B$ and $\eta$ small enough to ensure that $\{\overline{x}+\eta v_1 - x,\dots,\overline{x}+\eta v_r - x,\}$ is linearly independent and is a set of feasible directions for $x$, and consequently since all of these vectors are elements of $F_0 \cap \Ker A$, and there are $r=\dim F_0 \cap \Ker A$ of them, $\{\overline{x}+\eta v_1 - x,\dots,\overline{x}+\eta v_r - x,\}$ is indeed a set of feasible directions for $x$ that is a basis of $F_0\cap \Ker A$.
        \end{proof}
    \begin{lemma} \label{dim intersection inequality}
        For any subspaces $A,B,C \subset \R^d$, we have $\dim A\cap(B+C)\leq \dim A\cap B + \dim C$.
    \end{lemma}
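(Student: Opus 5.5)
The plan is to reduce the statement to the standard dimension formula $\dim(U+W)=\dim U+\dim W-\dim(U\cap W)$, applied to two well-chosen subspaces. Set $U:=A\cap(B+C)$ and $W:=B+C$. The inequality $\dim A\cap(B+C)\leq \dim A\cap B+\dim C$ is then a statement about how far $U$ can exceed its intersection with $B$.

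\textbf{Step 1: reduce to $U\cap B$.} Since $U\subset A$, we have $U\cap B\subset A\cap B$, so $\dim(U\cap B)\leq \dim(A\cap B)$. It therefore suffices to prove
\begin{align*}
\dim U\leq \dim(U\cap B)+\dim C.
\end{align*}

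\textbf{Step 2: the dimension count.} Since $U\subset B+C$, we also have $U+B\subset B+C$. Applying the dimension formula to $U$ and $B$ gives
\begin{align*}
\dim U &= \dim(U+B)-\dim B+\dim(U\cap B)\\
&\leq \dim(B+C)-\dim B+\dim(U\cap B)\\
&\leq \dim C+\dim(U\cap B).
\end{align*}
The last line uses $\dim(B+C)\leq \dim B+\dim C$. Combining this with Step 1 yields the claim.

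\textbf{Alternative via a quotient map.} One can also argue with the projection $\pi:U\to (B+C)/B$. Its kernel is $U\cap B$, and its image has dimension at most $\dim\big((B+C)/B\big)\leq \dim C$. Rank--nullity then gives the same bound.

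There is no real obstacle. The only point needing care is to pass through $U\cap B$ instead of working with $A\cap B$ directly, so that the containment $U+B\subset B+C$ is available. Here $\dim C$ is read as the dimension of the subspace $C$ (or of $\Vect C$, as when the lemma is applied with $C=\Vect\cD_0$).
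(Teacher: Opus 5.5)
Your proof is correct and is essentially the paper's argument. Both are a dimension count via Grassmann's formula together with $\dim(B+C)\le\dim B+\dim C$: the paper applies the formula to the pair $(A,B+C)$ and uses $\dim(A+B+C)\ge\dim(A+B)$, while you apply it to $(U,B)$ and use $U+B\subset B+C$. Your Step 1 is actually an equality, since $B\subset B+C$ gives $U\cap B=A\cap(B+C)\cap B=A\cap B$.
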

    \begin{proof}{Proof. }
        We have 
        \begin{align*}
            \dim (A\cap (B+C))&=\dim A + \dim (B+C) - \dim (A+B+C)\\
            &\leq \dim A + \dim B + \dim C - \dim (A+ B)\\
            &=\dim A \cap B + \dim C.
        \end{align*}
        
    \end{proof}
\section{Making a decision using measurements} \label{appendix: making a decision using measurements}

By combining our previous analysis and results, we derive two algorithms that enable optimal decision-making based on prior knowledge of $c$ (uncertainty set $\cC$) and a prescribed measurement of data points guided by our theory. Specifically, Algorithms \ref{algorithm space coverage problem vector space}, \ref{alg:relatively open}, \ref{alg: sufficient datatset for extreme points}, \ref{alg:data collection algorithm} identify which elements of $\cQ$ should be queried, using the output of \cref{alg:LP_case}, and returns a set $\cD$ containing these elements. Then, \cref{alg: decision-making} produces a decision based on the information obtained by evaluating the objective function at the elements of $\cD$.

\begin{algorithm}[h] 
    \caption{Decision-making with a \sufficient{}}
     \label{alg: decision-making}
    \KwIn{Decision set $\cX$, Uncertainty Set $\cC$, \Sufficient\ $\cD = \{q_1,\ldots,q_N\}$, Oracle $\pi$ such that for any $q\in \cQ$, $\pi(q)=c^\top q$ where $c$ is the ground truth.
}
    \KwOut{A decision $\hat{x}\in \arg\min_{x\in \cX}c^\top x$.}

    $o_1,\ldots,o_N \leftarrow \pi(q_1),\ldots,\pi(q_N)$ 

    Compute $\hat{c}\in \argmin \{\sum_{i=1}^{N}(c'^\top q_i - o_i)^2 \; : \; c' \in \cC\}$.

    \textbf{return} $\hat{x} \in \argmin_{x\in \cX}\hat{c}^\top x$.
\end{algorithm}
\end{APPENDICES}
\end{document}